\newcommand{\integers}{\mathbb{Z}}
\newtheorem*{corollary*}{Corollary}
\newtheorem*{theorem*}{Theorem}
\newtheorem{theorem}{Theorem}[section]
\newtheorem{lemma}[theorem]{Lemma}
\newtheorem{proposition}[theorem]{Proposition}
\newtheorem{corollary}[theorem]{Corollary}
\newtheorem{observation}[theorem]{Observation}
\theoremstyle{definition}
\newtheorem{definition}[theorem]{Definition}
\newtheorem*{example*}{Example}
\newtheorem*{convention*}{Convention}
\theoremstyle{remark}
\numberwithin{equation}{section}
\newcommand{\dl}{\text{DL}}
\begin{document}

\renewcommand{\bf}{\bfseries}
\renewcommand{\sc}{\scshape}
\vspace{0.5in}

\title{Visual Boundaries of Diestel-Leader Graphs}

\author{Keith Jones}
\address{Department of Mathematics, Computer Science \& Statistics; State University of New York College at Oneonta; 108 Ravine Parkway; Oneonta, NY 13820}
\email{keith.jones@oneonta.edu}

\author{Gregory A. Kelsey}
\address{Department of Mathematics; Trinity College; 300 Summit St.; Hartford, CT 06106}
\email{gregory.kelsey@trincoll.edu}

\subjclass[2010]{Primary 20F65, 20F69; Secondary 20E22, 05C25}

\keywords{visual boundary, Diestel-Leader graphs, lamplighter groups}

\thanks{The authors thank Moon Duchin and Melanie Stein for their
helpful conversations. The authors appreciate the helpful comments from the anonymous referee, 
particularly those which improved the approach in Section \ref{dl2qtop-section}. 
Support for the first author from a summer research grant from
SUNY Oneonta is gratefully acknowledged. 
Both authors also thank the Institute for Advanced Study for its hospitality at the 
Park City Math Institute Summer Session 2012.}

\begin{abstract}
Diestel-Leader graphs are neither hyperbolic nor CAT(0), so their
visual boundaries may be pathological. Indeed, we show that for $d>2$,
$\partial\dl_d(q)$ carries the indiscrete topology.  On the
other hand, $\partial\dl_2(q)$, while not Hausdorff, is $T_1$, totally
disconnected, and compact.  Since $\dl_2(q)$ is a Cayley graph of the
lamplighter group $L_q$, we also obtain a nice description of
$\partial\dl_2(q)$ in terms of the lamp stand model of $L_q$ and
discuss the dynamics of the action.
\end{abstract}
\maketitle

\section{Introduction}
The {\em visual boundary} $\partial M$ of a complete CAT(0) metric
space $M$ is the topological space obtained by giving the set of asymptotic equivalence classes of geodesic
rays in $M$ the compact-open topology \cite[Ch. II.8]{bh}. For any base
point $p \in M$, one can simply take
$\partial_p M$ to be the set of geodesic rays emanating from $p$, and
$\partial_p M$ and $\partial M$ are homeomorphic.  In this
setting, the visual boundary has nice properties: for instance, $M\cup\partial M$ is
contractible, and if $M$ is proper\footnote{A {\em proper} metric
space is one in which every closed metric ball is compact.} then
$\partial M$ provides a compactification of $M$ under the ``Cone
topology''. An action of a group
$G$ by isometries on $M$ can be extended to an action by
homeomorphisms on $\partial M$, and studying the dynamics of this
action can prove quite fruitful.
One can define the visual boundary more generally (i.e., outside the
context of CAT(0) spaces), and ask whether 
these nice properties still arise or whether the study of the action
on the boundary is still fruitful.

When a group $G$ acts geometrically on a space, one may
take the boundary of the space as a boundary of the group.  For word hyperbolic
groups, the visual boundary is unique, and has proven very useful 
\cite{kapovichbenakli}. Outside this
class of groups, the situation is not so nice. Croke and Kleiner have
shown that even CAT(0) groups may not have unique visual boundaries
\cite{crokekleiner}.  Even worse, outside this context
we may run into pathological situations. 
In \cite{Z2boundary}, it is shown that the visual boundary of 
the Cayley graph of $\integers^2$ with respect to the standard
generating set is uncountable, yet it has the indiscrete (a.k.a. trivial) 
topology.  In short, this occurs because one is able to play the 
asymptotic classes (two rays are equivalent if they are close in the
long term) against the compact-open topology (two rays are close if
they agree in the short term) to obtain a sequence of asymptotic rays
representing an arbitrary point of the boundary and whose limit is
another arbitrary point of the boundary.  

This paper investigates whether visual boundaries
for non-hyperbolic, non-CAT(0) groups may carry interesting
topologies, and if so what this might tell us about those
groups. In particular, we study the family of lamplighter 
groups $L_q = \integers_q \wr \integers$, $q \geq 2$ an
integer. Using the appropriate generating set, one obtains a
particularly nice Cayley graph for $L_q$, called the 
Diestel-Leader
graph $\dl_2(q)$ \cite{horocyclic}, \cite[\S 2]{Woess-Lamplighter}.
The boundary $\partial \dl_2(q)$ is not a canonical boundary for
$L_q$, since using a different Cayley graph might give rise to a
different boundary. However, $\partial \dl_2(q)$ is appealing in that
it can 
be well understood using the standard ``lamp stand'' model for the lamplighter
group. This model and the (well-known) geometry of Diestel-Leader graphs provide
ample tools for studying the visual boundary.

More generally, the Diestel-Leader graph $\dl(q_1,q_2,\dots,q_d)$ can be
realized as a subspace of the product of $d$ trees, having respective valence
$q_1+1$, $q_2+1$, $\dots$, $q_d+1$ \cite{horocyclic}. The notation $\dl_d(q)$ is used
when each tree has valence $q+1$. 
While we will state our results only for the cases when the degrees are equal, 
allowing the degrees to vary between trees has no effect on our analysis.
Recently, Stein and Taback have described the metric on these graphs
\cite{steintaback}, and Duchin, Leli\`{e}vre, and Mooney have
discussed geodesics in $\dl_d(q)$ 
in their work on sprawl \cite{sprawl}. 
While not all Diestel-Leader graphs
are Cayley graphs \cite[Theorem 1.4]{eskinfisherwhyte}, \cite[Corollary 2.15]{horocyclic}, 
this paper discusses the relation to $L_2$ when applicable and also
has results that apply in the non-Cayley graph case.
Because Diestel-Leader graphs inherit much of their structure
from trees, which are prototypical CAT(0) and hyperbolic spaces, it seems
natural to ask whether the boundaries of such graphs inherit any nice
properties from the boundaries of trees. 

In Section
\ref{backgroundsection}, we provide some background on visual
boundaries, lamplighter groups, and Diestel-Leader graphs.
In Section \ref{dl2qset-section} we
collect some basic facts about geodesic rays in Diestel-Leader graphs
and prove:

\begin{theorem*}[A - Corollary 3.7]
As a set, $\partial \dl_2(q)$ is a disjoint union of two punctured
Cantor sets.
\end{theorem*}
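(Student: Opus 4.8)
The plan is to exhibit an explicit bijection between the asymptotic classes of geodesic rays and the set $(\partial T_1\setminus\{\omega_1\})\sqcup(\partial T_2\setminus\{\omega_2\})$, where $T_1,T_2$ are the two $(q+1)$-valent trees, $\omega_1,\omega_2$ are the marked ends defining the horocyclic realization of $\dl_2(q)$ inside $T_1\times T_2$, and $h=\beta_1$ is the height function. Because each $\partial T_i$ is (topologically) a Cantor set, deleting the single point $\omega_i$ gives a punctured Cantor set, so such a bijection is exactly the assertion. The map $\Phi$ will send the class of a ray to the unique non-marked end to which the ray's projection converges in the appropriate tree.

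First I would invoke the facts about geodesic rays collected in Section~\ref{dl2qset-section} to pin down the coarse shape of a ray. A ray confined to finitely many horocycles would meet only finitely many vertices of $\dl_2(q)$, so the height is unbounded; the geodesic structure then forces it to be eventually monotone, giving two types, $h\to+\infty$ and $h\to-\infty$. For an up-ray the $T_2$-projection descends toward $\omega_2$ while the $T_1$-projection ascends to some end $\xi_1\neq\omega_1$, and the down case is symmetric. Both the type and the recorded end are asymptotic invariants: the ambient metric dominates each tree metric and $h$ is $1$-Lipschitz, so rays at bounded distance stay at bounded distance in each $T_i$ and hence converge to the same ends, while rays of opposite type satisfy $|h-h'|\to\infty$ and cannot be asymptotic. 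This shows $\Phi$ is well defined and that it separates classes with different recorded ends. Surjectivity is handled by construction: given $\xi_1\neq\omega_1$, let the $T_1$-coordinate follow the geodesic toward $\xi_1$ while the $T_2$-coordinate descends toward $\omega_2$; in lamplighter language the cursor climbs monotonically, setting each lamp as it is passed, and the absence of backtracking makes this a geodesic ray limiting to $\xi_1$ (symmetrically for ends of $T_2$).

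The one remaining point, and the place where the real work lies, is the rest of injectivity: two geodesic rays that limit to the same end must actually be asymptotic, not merely co-convergent in the trees. Granting the eventual monotonicity of the height from Section~\ref{dl2qset-section}, an up-ray with limit $\xi_1$ eventually has its $T_1$-projection equal to the straight ray toward $\xi_1$ and its $T_2$-projection equal to the straight ray toward $\omega_2$, so for large $n$ the point $r(n)$ is determined by $\xi_1$ together with its height; two such rays then trace the same pair of tree geodesics with an eventually constant height offset $c$, whence $d_{\dl}(r(n),r'(n))=|c|$ for large $n$ and the rays are asymptotic. The obstacle is therefore not the bookkeeping of the bijection but the input that drives it, namely that a geodesic ray cannot oscillate in height without bound; establishing this requires the explicit description of the $\dl_2(q)$ metric and its geodesics due to Stein--Taback \cite{steintaback} and Duchin--Leli\`evre--Mooney \cite{sprawl}, and it is this structural fact that makes the recorded end a complete invariant.
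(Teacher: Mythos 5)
Your proposal is correct and takes essentially the same route as the paper: both arguments reduce the statement to the fact that a geodesic ray is eventually monotone in height, so that an asymptotic class is completely determined by a non-distinguished end of one tree paired with the distinguished end of the other (the projection bound of Observation \ref{lowerboundondistobs} giving invariance of the recorded end, and the eventual merging onto a common line giving completeness of the invariant), which is exactly the decomposition of Corollary \ref{setcor}. The only notable difference is that you outsource the key structural input --- that a geodesic ray cannot oscillate in height indefinitely --- to \cite{steintaback} and \cite{sprawl}, whereas the paper proves the sharper Lemma \ref{no2turnraysobs} (there are no two-turn geodesic rays) by an explicit shortcut construction, using \cite{sprawl} only for the at-most-two-turns bound on geodesic segments; this is a legitimate shortcut on your part, since your bijection argument (unlike the paper's finer description of the classes $C_n^i$) genuinely needs only the weaker, citable fact that rays have finitely many turns.
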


In Section \ref{dl2qtop-section} we prove:

\begin{theorem*}[B] $\partial \dl_2(q)$ is not Hausdorff, but it is
$T_1$, compact, and totally disconnected.
\end{theorem*}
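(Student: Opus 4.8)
The plan is to work with representatives based at a fixed vertex $p$ and the compact-open topology, recalling from Section~\ref{dl2qset-section} that a geodesic ray in $\dl_2(q)$ is recorded by how its two tree-coordinates behave: one coordinate escapes upward to a non-distinguished end while the other runs down to the distinguished end, and which coordinate escapes upward sorts the rays into the two families of Corollary 3.7. Two rays are close in the compact-open topology exactly when they agree on a long initial segment, and $\partial\dl_2(q)$ is the quotient of this ray space by asymptotic equivalence; I will describe neighborhoods of a boundary point as the saturations of the cylinder sets cut out by finite initial segments of its representatives. The four assertions---compactness, $T_1$, the failure of Hausdorffness, and total disconnectedness---will be handled in that order, the first two being soft and the last two carrying the content.

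For compactness I would apply the Arzel\`a--Ascoli theorem: $\dl_2(q)$ is proper and locally finite, geodesic rays from $p$ are $1$-Lipschitz maps $[0,\infty)\to\dl_2(q)$ with image in the compact closed balls about $p$, and being geodesic is a closed condition under uniform convergence on compacta. Hence the space of rays from $p$ is compact, and since every asymptotic class has a representative based at $p$ (Section~\ref{dl2qset-section}), $\partial\dl_2(q)$ is a continuous image of it and so compact. For the $T_1$ property I would show that each asymptotic class is closed in the ray space: the class is pinned down by the end to which the upward-escaping coordinate converges, this end varies continuously along a compact-open convergent sequence of representatives of a \emph{fixed} class (their upward tree-projections share longer and longer prefixes), so a limit of representatives of one class again represents that class. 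Taking preimages shows that singletons are closed.

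The crux is the failure of Hausdorffness, and here the plan is to play asymptotic equivalence against the compact-open topology as in the $\integers^2$ example. The key geometric input from Section~\ref{dl2qset-section} is that a ray realizing an upward end which branches off the distinguished axis only deep down is forced to begin with a long detour in the opposite tree, and this initial detour coincides with the start of a ray of the \emph{other} family. Concretely, I would fix a point $\eta$ of the second family together with its representative $\delta$ from $p$, and build points $\xi_n$ of the first family whose branch depth tends to infinity while their representatives are chosen to agree with $\delta$ on the first $n$ steps; then $\xi_n\to\eta$. Steering the detours of these same points toward two different second-family ends produces a single sequence clustering at two distinct points, so those two points admit no disjoint neighborhoods. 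The delicate part is the bookkeeping: one must verify that the detoured rays are genuinely geodesic, that rerouting the detour does not change the asymptotic class of $\xi_n$, and that the overlap with $\delta$ really grows, so that the compact-open convergence---and hence the non-separation---is valid.

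Finally, total disconnectedness. The finite-initial-segment cylinders descend to clopen subsets of $\partial\dl_2(q)$, giving a clopen basis; because the space is not Hausdorff it cannot be totally separated, so I cannot hope to isolate every pair of points by a single clopen set. Instead I would argue in two steps. Any connected set is contained in a single quasicomponent, that is, in the intersection of all clopen sets through one of its points; and each such quasicomponent, being one of the non-separated clumps produced above, is itself totally disconnected in its subspace topology---within a clump the extra clopen sets available in the subspace (using that points are closed) do separate the points, which are in effect points of a Cantor set. Hence every connected subset is a singleton. I expect the non-Hausdorff construction to be the main obstacle, with the quasicomponent analysis for total disconnectedness the second most delicate point.
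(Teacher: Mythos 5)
Your compactness and $T_1$ arguments are correct but follow genuinely different routes from the paper's. For compactness, the paper (Proposition \ref{compactobs}) transfers an open cover of $\partial \dl_2(q)$ to a cover of $\partial T_0 \sqcup \partial T_1$ and invokes compactness of the tree boundaries, whereas you apply Arzel\`a--Ascoli to the space of based geodesic rays and push compactness through the quotient map, using Proposition \ref{basepointprop} for surjectivity; this is cleaner and works verbatim in any locally finite graph. For $T_1$, the paper (Observation \ref{t1obs}) exhibits separating basis elements directly, while you show that fibers of the quotient map are closed; your ``the end varies continuously'' justification is loose, but the conclusion is easy to make rigorous: a class bottoming out at depth $h$ in $T_i$ has exactly $q^h$ based representatives (its $T_i$-projection is fixed by Lemma \ref{sameheightraysobs}, and only the $h$-step initial ascent in $T_{1-i}$ is free), so each fiber is finite, hence closed in the Hausdorff ray space. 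Your non-Hausdorff construction is the paper's Observation \ref{nonhausdorffobs} with the two families relabeled, and the ``bookkeeping'' you flag hides one real constraint: the limit points must be \emph{turn-free}, i.e.\ lie in $C_0^1$. If $\delta$ had a turn, your detoured rays $\xi_n$ would have two turns and would fail to be geodesic (Lemma \ref{no2turnraysobs}); moreover, no sequence from the first family can converge to a point of $C_m^1$ with $m>0$, since $C_m^1$ is open (Lemma \ref{openobs}). With both limit points in $C_0^1$ the argument is fine---indeed, by Observation \ref{basisobs} any sequence $[\xi_n]\in C_n^0$ automatically converges to \emph{every} point of $C_0^1$, so the re-steering of detours is needed only if one insists on working with representatives rather than basis neighborhoods.

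The genuine error is the first sentence of your disconnectedness paragraph: the cylinder sets do \emph{not} descend to a clopen basis, and no clopen basis can exist, because a $T_1$ space with a basis of clopen sets is Hausdorff (given $x\neq y$, the set $\{y\}$ is closed, so some clopen $U$ satisfies $x\in U\subseteq X\setminus\{y\}$, and then $U$ and its complement are disjoint open sets separating $x$ from $y$). So that sentence contradicts the two properties you have just established. Concretely, the cylinders that fail to be closed are exactly those centered at points of $C_0^i$: by Observation \ref{gluingobs}, every neighborhood of every point of $C_0^i$ meets $C_{k,\infty}^{1-i}$, so the closure of $B_{[0,k]}([\gamma],\epsilon)$, which contains $C_{k,\infty}^{1-i}$, contains \emph{all} of $C_0^i$ and not merely the classes agreeing with $\gamma$. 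The repair is to use only the clopen sets that genuinely exist: the Cantor sets $C_n^i$ ($n>0$) and their clopen subsets, together with the sets $C_0^i \cup C_{k,\infty}^{1-i}$, which are open by the paper and closed because their complements are easily checked to be open from Observation \ref{basisobs}. These suffice for your quasicomponent plan: the quasicomponent of a point of $C_n^i$, $n>0$, is a singleton (clopen subsets of the clopen Cantor set $C_n^i$ are clopen in $\partial \dl_2(q)$), while the quasicomponent of a point of $C_0^i$ is contained in $\bigcap_{k}\bigl(C_0^i \cup C_{k,\infty}^{1-i}\bigr) = C_0^i$, whose subspace topology is that of a punctured Cantor set (Observation \ref{basisobs}), hence admits no nontrivial connected subsets. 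Since every connected set lies in a single quasicomponent, total disconnectedness follows. This corrected argument is close in spirit to the paper's Observation \ref{disconnectedobs}, which separates points directly using the clopen sets $C_n^i$ rather than passing through quasicomponents.
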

The proof of this Theorem is collected in Observations
\ref{nonhausdorffobs}, \ref{t1obs}, Proposition \ref{compactobs}, and
Observation \ref{disconnectedobs}. Additionally, we discuss the
dynamics of the action by $L_2$ on $\partial \dl_2(q)$ in
Theorem \ref{actionthm} and Corollary \ref{actioncor}.
In Section \ref{dldqsection} we discuss the geometry of geodesics in $\dl_d(q)$
and prove: 
\begin{theorem*}[C - Theorem \ref{indiscretethm}]
For $d>2$, the topology of $\partial \dl_d(q)$ is indiscrete.
\end{theorem*}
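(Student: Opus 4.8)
The plan is to show that for $d>2$, the visual boundary $\partial\dl_d(q)$ carries the indiscrete topology by exhibiting, for every pair of boundary points $\xi,\eta$, a sequence of geodesic rays all representing $\xi$ that converges (in the compact-open topology) to a ray representing $\eta$. This is exactly the ``asymptotic-against-compact-open'' mechanism described in the introduction for $\integers^2$, and the extra flexibility coming from having $d\ge 3$ tree-factors should make the construction go through uniformly. Concretely, I would fix a basepoint $p$ and work with $\partial_p\dl_d(q)$, so that every boundary point is a geodesic ray emanating from $p$. To prove the topology is indiscrete it suffices to show every nonempty open set is everything, equivalently that every point is in the closure of every other point; so the whole theorem reduces to the single convergence statement above.

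First I would recall the structure of geodesic rays in $\dl_d(q)$ from the analysis in Section \ref{dldqsection}: a ray is recorded by its behavior in each of the $d$ trees together with the horocyclic height functions, subject to the Diestel-Leader constraint that the heights sum to a constant. Given target rays $\xi$ and $\eta$, I would build the approximating sequence $\gamma_n$ by having $\gamma_n$ agree with $\xi$ along an initial segment of combinatorial length $n$ (this forces $\gamma_n\to\xi$ in the compact-open topology, since any compact set is eventually matched), and then, after leaving that initial segment, having $\gamma_n$ ``turn'' and run out to infinity in a way asymptotic to $\eta$. The key point is that with $d\ge 3$ factors there is enough room to perform this turn while keeping the path a genuine geodesic ray and keeping it asymptotically equivalent to $\eta$: in at least one tree-factor the ray can backtrack or redistribute height without violating the geodesic condition, which is precisely the phenomenon that fails (or is more delicate) when $d=2$.

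The key steps, in order, are: (1) reduce indiscreteness to the closure statement ``$\eta\in\overline{\{\xi\}}$ for all $\xi,\eta$''; (2) describe a ray $\gamma_n$ that matches $\xi$ on its length-$n$ prefix; (3) verify that beyond the prefix, $\gamma_n$ can be continued to a ray asymptotic to $\eta$, so that $[\gamma_n]=[\eta]$ as a point of the boundary; (4) confirm $\gamma_n\to\xi$ in the compact-open topology by checking uniform convergence on compact subsets of $[0,\infty)$; and (5) conclude that the only closed sets are $\emptyset$ and the whole space. The heart of the matter is step (3): I must certify that the turning construction produces a path that is globally geodesic (not merely locally), and that its asymptotic class is genuinely $\eta$ rather than some unintended point. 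This will rely on the explicit metric/geodesic description for $\dl_d(q)$ with $d>2$ assumed available from earlier in the section.

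I expect the main obstacle to be precisely this geodesic-continuation step for $d\ge3$: one must exploit the third tree-factor to absorb the height bookkeeping forced by the Diestel-Leader constraint, so that the ray can simultaneously (a) realize an arbitrary asymptotic class $\eta$ and (b) remain distance-minimizing all the way out. Making this rigorous requires a careful choice of how height is distributed among the factors past the matching prefix, and a verification that no shortcut exists across the turn. I would likely isolate this as a lemma establishing that any two rays from $p$ admit a common ``arbitrarily long initial agreement with one, eventual asymptotic agreement with the other'' interpolation; once that lemma is in hand, the indiscreteness of $\partial\dl_d(q)$ for $d>2$ follows immediately.
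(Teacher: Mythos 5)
Your high-level mechanism is the correct one and is the same as the paper's: reduce indiscreteness to showing every pair of boundary points is topologically indistinguishable, and certify this by producing geodesic rays lying in one asymptotic class that converge uniformly on compact sets to a representative of the other (this is exactly Observation \ref{distinguishobs}). The gap is in your step (3), and it is fatal as stated: the interpolation lemma you propose --- for all $\xi,\eta$ and all $n$, a geodesic ray agreeing with $\xi$ on $[0,n]$ and then asymptotic to $\eta$ --- is \emph{false}, for every $d$. Take $\xi$ to ascend in $T_0$ and descend in $T_1$ on every edge, and $\eta$ to ascend in $T_1$ (say along the leftmost ray) and descend in $T_0$ on every edge. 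Any ray $\gamma_n$ agreeing with $\xi$ on even its first edge has descended one edge of $T_1$ below $o_1$. If $\gamma_n$ is also asymptotic to $\eta$, then since heights change by at most one per edge, $h_1(\gamma_n(t))$ must track $h_1(\eta(t))=t\to+\infty$, and by Observation \ref{lowerboundondistobs} the projection $\gamma_n^{(1)}$ must eventually lie in the subtree of $T_1$ above $o_1$ containing $\eta$'s end; since $T_1$ is a tree with unique predecessors, $\gamma_n^{(1)}$ must re-ascend the very edges it descended. By Theorem \ref{geodesicturnsthm} the $T_1$-height profile of a geodesic has at most one turn, so it must descend monotonically to some depth and then ascend monotonically forever; hence the last $T_1$-descent is followed, with no intervening $T_1$-moves, by a re-ascent of that same edge, and Lemma \ref{downthenimmediatelyuplemma} then shows $\gamma_n$ is not geodesic. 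So no interpolating ray exists for any $n\geq 1$. Note also that nothing in your outline concretely uses $d>2$ (the phrases ``enough room'' and ``backtrack or redistribute height'' are never cashed out); if the construction worked as described it would equally prove $\partial\dl_2(q)$ indiscrete, contradicting Theorem B.

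The paper gets around exactly this obstruction with two ideas your proposal lacks. First, it never interpolates between arbitrary pairs directly: each class is first replaced by a topologically indistinguishable --- in general \emph{not} asymptotic --- class in a normal form, with nontrivial projections only in $T_0$ (ascending) and $T_1$ (descending); this is the content of Lemmas \ref{finiteobs}, \ref{infiniteobs}, \ref{switchobs} and Corollary \ref{updowncor}, and it is how ``opposite-direction'' pairs like $(\xi,\eta)$ above are handled: by a chain of indistinguishabilities through intermediate boundary points, not by a single interpolation. Second, in the final step (Theorem \ref{indiscretethm}) the approximating rays do not share a prefix with either target: the paper builds $\tau_n\in[\gamma]$ and $\tau_n'\in[\gamma']$ that share a \emph{neutral} prefix with each other --- $n$ edges ascending in the spare tree $T_2$ and descending in $T_1$, committing to neither class's $T_0$-end --- after which each descends in $T_0$ (ascending in $T_2$) and then follows its own target's ends. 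That is the precise, essential use of the third tree. To repair your argument you would need both ingredients (normal-form reduction plus the two-sided common-prefix construction, glued by transitivity of indistinguishability), at which point you will have reconstructed the paper's proof.
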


Roughly speaking, this is a consequence of
the additional degree of freedom a third tree provides. Along the way, 
we establish  in Theorem \ref{geodesicturnsthm} a strong restriction on the
kinds of paths in $\dl_d(q)$ which may be geodesics.

One feature of CAT(0) and hyperbolic spaces is that the
horofunction\footnote{In short, for a
metric space $X$, a horofunction is a point in $C(X)$ (the space of
continuous functions on $X$ with the topology of compact convergence
on bounded subsets) which is a limit of a sequence of functions
$d_y(x) = d_X(x,y)$, $y \in X$. Horofunctions represent points of the
horofunction bounday $\partial_h(X)$. 
Busemann functions are those horofunctions obtained as a limit of points along 
a geodesic ray in $X$. 
See \cite[II.8.12-14]{bh} for details.}
boundary $\partial_h X$ is naturally homeomorphic to $\partial X$
\cite[II.8.13]{bh}, since all horofunctions are Busemann
functions (i.e. they come from geodesic rays).  However, outside this
setting, one may find horofunctions which are not Busemann functions
\cite{websterwinchester}. An investigation of the horofunction
boundary will appear in a forthcoming work, where we will show that
while $\partial \dl_2(q)$ embeds in $\partial_h \dl_2(q)$, there are many
horofunctions which are not Busemann functions.

\section{Background}
\label{backgroundsection}

\subsection{The Visual Boundary.}
\label{visualboundariessub}
Let $X$ be a geodesic space with base point $x_0$. Two geodesic rays
$\gamma, \gamma' : [0,\infty) \rightarrow X$ are said to be {\em asymptotic} if
there is a $\lambda \geq 0$ such that for all $t\geq0$,
$d(\gamma(t),\gamma'(t)) \leq \lambda$. The {\em visual boundary} of $X$ is
the space $\partial X$ consisting of all asymptotic equivalence
classes of geodesic rays in $X$, endowed  with the quotient topology from
the topology of
uniform convergence on compact sets.  The {\em based visual bounday} of $X$
with base point $x_0$, denoted $\partial(X, x_0)$, 
is the same topology restricted to the subset of
geodesic rays emanating from $x_0$. In general the based and unbased
visual boundaries need not agree.

In this paper we consider (based) Diestel-Leader graphs, 
which are known to be vertex transitive
\cite[Proposition 2.4]{horocyclic}, so the based visual boundary is
independent of base point.
In Proposition
\ref{basepointprop}, we show that in $\dl_2(q)$, the based and unbased
boundaries are the same, so throughout Sections \ref{dl2qset-section} and
\ref{dl2qtop-section}, we abuse notation and use $\partial \dl_2(q)$ to refer
to the based visual boundary of $\dl_2(q)$.
In Section \ref{dldqsection}, we still abuse notation and  use 
$\partial \dl_d(q)$ to refer
to the based visual boundary, even though we do not consider whether it is the
same as the unbased visual boundary.

\subsection{The Diestel-Leader Graph $\dl_d(q)$.}
For an integer $q$, let $T$ be the regular $q+1$-valent
simplicial tree.
Following \cite[\S 2]{Woess-Lamplighter} and \cite[\S
2]{steintaback}, we orient the edges of $T$ so that each vertex $v$ has
exactly one {\em predecessor} $v^-$  and $q$ {\em successors}. This
induces a partial ordering on the set of vertices of $T$, under which
any two vertices $v$ and $w$ have a greatest common ancestor $v
\curlywedge w$. Choosing a base vertex $o$ in $T$ allows us to define
a height function $h(v) = d_T(v, v \curlywedge o) - d_T(o, v \curlywedge
o)$, where the function $d_T$ measures distance in $T$ when each edge
is given length 1. 
The partial ordering provides a chosen endpoint
$\omega$ of $T$, obtained by any geodesic ray that always follows
predecessors, and this height function is the Busemann function for
$\omega$ corresponding to the ray emanating from $o$. For a vertex $v$ in the
horocycle $H_k = \{ v \in T \mid h(v) = k \}$, its unique precedessor $v^-$
is in $H_{k-1}$, and each of its $q$ successors is in $H_{k+1}$ 
(see Figure \ref{dl22fig}).
In particular, for a given initial vertex $v$, there
is a unique ``downward'' path of length $k$, for each $k$, and a
unique downward ray: that which leads to $\omega$.

We now define $\dl_d(q)$. 
Let $T_0$, $T_1, \dots, T_{d-1}$ be copies of
$T$, with base points $o_i \in T_i$.  The Diestel-Leader graph
$\dl_d(q)$ is the graph whose vertex set consists of $d$-tuples $v=
(x_0, x_1, \dots, x_{d-1})$, $x_i \in T_i$ a vertex, such that
$\sum_{i=0}^{d-1} h(x_i) = 0$.  Let $h_i(v)$
denote the height function $h(x_i)$ on $T_i$.  There is a natural
basepoint $(o_0, o_1, \dots, o_{d-1})$ for $\dl_d(q)$.

The edges of $\dl_d(q)$ correspond to pairs  $(v, w) =
((x_0, ...,x_{d-1}),(y_0, ...,y_{d-1}))$ such that there are $i$ and
$j$, $i\neq j$ with an edge joining $x_i$ to $y_i$ in $T_i$, an
edge joining $x_j$ to $y_j$ in $T_j$, and $x_k=y_k$ for all $k\neq i,j$. The relation \[h(y_i) -
h(x_i) = \pm 1 = h(x_j) - h(y_j) \] follows from the definition of
vertices of $\dl_d(q)$.  Thus, moving along an edge in $\dl_d(q)$
means simultaneously choosing one tree in which to increase height, and
another tree in which to decrease height, while holding the position
constant in every other tree. 

\begin{figure}
\includegraphics{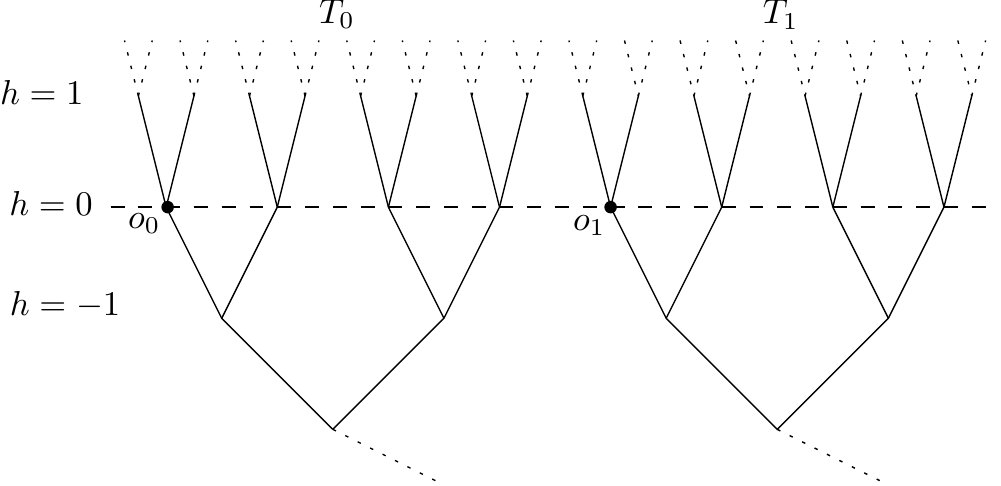}
\caption{A neighborhood of the origin in $\dl_2(2)$
}
\label{dl22fig}
\end{figure}

\begin{convention*} 
We adopt the convention that all geodesic rays in $\dl_d(q)$ under
discussion emanate from $o$, unless otherwise stated.
\end{convention*}

\subsection{Lamplighter groups}
\label{lamplighterbackgroundsubsection}

The Diestel-Leader graph $\dl_2(q)$ is the Cayley graph of the lamplighter group $L_q = \mathbb{Z}_q\wr\mathbb{Z}$ with generating set $\{t, at, a^2t, ... , a^{q-1}t \}$ ($t$ is the generator of $\mathbb{Z}$ in the wreath product and $a$ is the generator of $\mathbb{Z}_q$). Each element of the lamplighter group (and thus each vertex of $\dl_2(q)$) is associated with a ``lamp stand.'' In the case $q=2$, the lamp stand consists of a row of lamps in bijective correspondence with $\mathbb{Z}$, a finite number of which are lit, and a lamplighter positioned at one of the lamps. If $q>2$, then the lamps have $q$ settings: these can be interpreted as off and $q-1$ levels of brightness while lit or $q-1$ different colors \cite{parry}. The Diestel-Leader graph $\dl_3(q)$ has a similar interpretation, except that there is a rhombic grid of lamps \cite{clearyriley}.

We should also note that for $d>3$, if $q$ has a prime factor $p$ such that $p< d-1$, it is open whether $\dl_d(q)$ is a Cayley graph of some group. If $q$ has no such prime factor, then $\dl_d(q)$ is a Cayley graph \cite[Corollary 3.17]{horocyclic}. If $q$ does have such a prime factor, it is only known that $\dl_d(q)$ is quasi-isometric to a Cayley graph \cite[Corollary 3.21]{horocyclic}.

In the $d=2$ case, the base vertex $(o_0, o_{1})$ corresponds to the lamp stand
with no lit lamps and the lamplighter at position 0. In general, the
lamp stand corresponding to vertex $(x_0, x_{1})$ has the
lamplighter at position $h(x_0)$.  An edge in $\dl_2(q)$ corresponds to the lamplighter stepping between adjacent lamps. If the edge is associated with generator $t$, then the lamplighter moves without switching any bulbs. If the edge is associated with generator $at$, then the lamplighter switches the bulb before he leaves (if he is moving in the positive direction) or after he arrives (if he is moving in the negative direction) \cite{Woess-Lamplighter}.

So, for example, the word $t^3(at)t^{-2}(at)^{-2}t^{-1}$ corresponds with the lamplighter starting at position 0, moving three to the right, lighting lamp 3 and stepping to position 4, stepping back to position 2, then stepping back to light lamps 1 and 0, and finally stepping back to lamp $-1$. The end result is the lamp stand pictured in Figure \ref{lampstandexamplefig}. Notice that this lamp stand is also obtainable by the word $(at)^2t(at)t^{-5}$, and so these words represent the same element of $L_2$.

\begin{figure}[htbp]
\begin{center}
\includegraphics[width=4in]{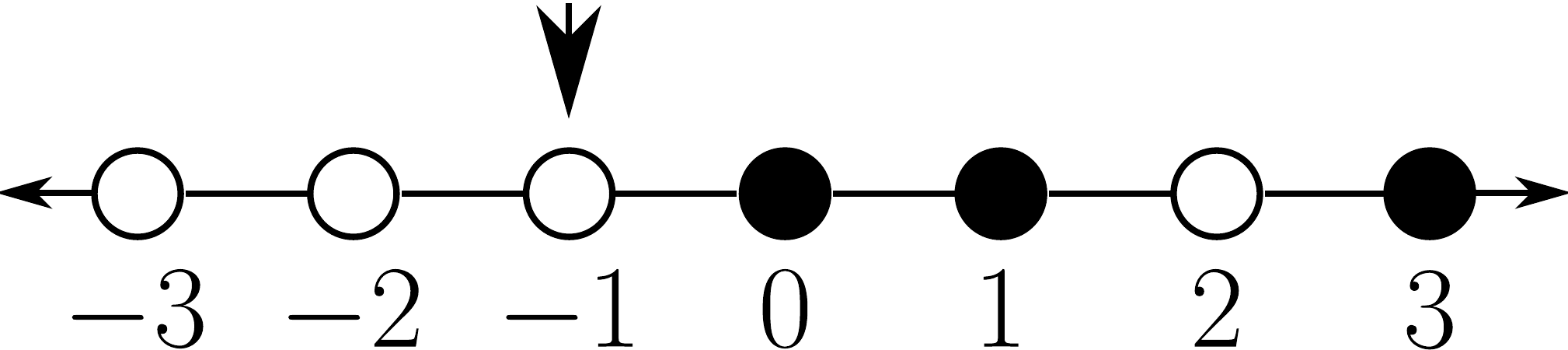}
\caption{Lamp stand example for $t^3(at)t^{-2}(at)^{-2}t^{-1}$}
\label{lampstandexamplefig}
\end{center}
\end{figure}

Multiplication of elements corresponds to ``composition'' of lamp stands. To compute the lamp stand for the element $g\cdot h$, take the lamp stand for $g$, then have the lamplighter perform the same switching of lamps as in $h$, but starting from the lamplighter's end position in $g$ instead of at $0$. So, for example, for $g$ the lamp stand in Figure \ref{lampstandexamplefig}, $g\cdot t$ would have the same set of lit lamps, but the lamplighter would be at $0$ instead of $-1$. The lamp stand for $g\cdot tat$ would have the lamplighter at position $1$ and lit lamps only at positions $1$ and $3$.

Note that $L_q$ has presentation
\[ \langle a,t \mid [t^i at^{-i}, t^j at^{-j}] = a^q = 1 \text{ for all }
i,j \in \integers\rangle, \]
from which we obtain the epimorphism $exp_t: L_2 \rightarrow \integers$
mapping $a\mapsto 0$ and $t \mapsto 1$, recording the exponent sum of
$t$ for a given element of $L_2$.

\section{The boundary of $\dl_2(q)$}
\label{dl2qset-section}

\subsection{Using projections to bound distance}

We begin with two observations that apply in the general case.
Since a path in $\dl_d(q)$ projects to a path in a tree $T_i$, we have a lower
bound on the distance between two vertices:
\begin{observation}
\label{lowerboundondistobs}
Let $v = (v_0, v_1, \dots, v_{d-1})$ and $w = (w_0, w_1, \dots,
w_{d-1})$ be two vertices in $\dl_d(q)$. 
Then $d(v,w) \geq \max\{d_{T_i}(v_i,w_i)\, |\, 0 \leq i \leq d-1\}$.
\end{observation}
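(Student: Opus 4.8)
The plan is to exploit the fact that distance in $\dl_d(q)$ is defined via edge-paths, and that each edge-move changes the position in exactly two of the $d$ trees (each by a single edge), leaving the other coordinates fixed. Consequently, any edge-path in $\dl_d(q)$ from $v$ to $w$ projects, coordinate by coordinate, to a walk in each tree $T_i$ from $v_i$ to $w_i$, and the key point is that this projection does not increase length.

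First I would fix a geodesic edge-path $P = (v = u^{(0)}, u^{(1)}, \dots, u^{(n)} = w)$ in $\dl_d(q)$ realizing $d(v,w) = n$. For each fixed index $i$, I would consider the sequence of $i$-th coordinates $u^{(0)}_i, u^{(1)}_i, \dots, u^{(n)}_i$ in the tree $T_i$. By the description of edges given in the excerpt, consecutive entries $u^{(m)}_i$ and $u^{(m+1)}_i$ are either equal (when the $m$-th step does not involve the tree $T_i$) or joined by a single edge in $T_i$ (when it does). Deleting the repeated vertices yields an edge-walk in $T_i$ from $v_i$ to $w_i$ of length at most $n$.

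Next I would invoke the elementary fact that in the tree $T_i$ the combinatorial distance $d_{T_i}(v_i, w_i)$ is the minimal length of any edge-walk joining $v_i$ to $w_i$, so this projected walk has length at least $d_{T_i}(v_i, w_i)$. Combining the two bounds gives $d_{T_i}(v_i, w_i) \leq n = d(v,w)$. Since the index $i$ was arbitrary, this holds for every $0 \leq i \leq d-1$, and taking the maximum over $i$ yields $d(v,w) \geq \max\{ d_{T_i}(v_i, w_i) \mid 0 \leq i \leq d-1\}$, as desired.

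I do not anticipate a genuine obstacle here, as this is a direct consequence of the product-like structure; the only point requiring a little care is the bookkeeping that each edge-move in $\dl_d(q)$ contributes at most one edge to each coordinate projection (in fact it contributes an edge to exactly two coordinates and nothing to the rest), so that the total length of a projected walk never exceeds the length of the original path. Making this precise is routine, and the tree distance being the minimal walk length is standard.
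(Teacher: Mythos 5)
Your proof is correct and follows exactly the argument the paper intends: the paper justifies this observation with the single remark that a path in $\dl_d(q)$ projects to a path in each tree $T_i$, which is precisely the projection-and-minimal-walk-length argument you spell out. Your version simply makes the routine bookkeeping explicit; there is no difference in approach.
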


Moreover, there is a simple upper bound on the distance as well:
\begin{lemma}
\label{upperboundondistobs}
Let $v = (v_0, v_1, \dots, v_{d-1})$ and $w = (w_0, w_1, \dots, w_{d-1})$ be
two vertices in $\dl_d(q)$. Then
\[ d(v,w) \leq \sum_{i=0}^{d-1} d_{T_i}(v_i, w_i) \]
\end{lemma}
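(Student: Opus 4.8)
The plan is to prove the inequality by exhibiting a single edge-path in $\dl_d(q)$ from $v$ to $w$ of length at most $\sum_{i=0}^{d-1} d_{T_i}(v_i,w_i)$; since the graph distance is the length of a shortest path, any such path suffices. The key structural input is the description of edges given above: traversing one edge simultaneously raises the height in exactly one tree by $1$, lowers the height in exactly one other tree by $1$, and fixes all remaining coordinates. Consequently any path from $v$ to $w$ projects in each $T_i$ to a walk from $v_i$ to $w_i$, and the number of edges used equals half the total number of tree-steps taken across all coordinates.

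First I would record the geodesic data in each tree. In $T_i$ the unique geodesic from $v_i$ to $w_i$ passes through the common ancestor $m_i = v_i \curlywedge w_i$: it consists of $a_i := h(v_i) - h(m_i)$ height-decreasing steps down to $m_i$ (following predecessors), followed by $b_i := h(w_i) - h(m_i)$ height-increasing steps up to $w_i$ (following successors), so $d_{T_i}(v_i,w_i) = a_i + b_i$. Because every vertex of $\dl_d(q)$ satisfies $\sum_i h(x_i)=0$, summing $h(w_i)-h(v_i)=b_i-a_i$ over $i$ yields $\sum_i a_i = \sum_i b_i =: A$. Thus the total number of required height-decreases equals the total number of required height-increases, which is exactly the balance each edge of $\dl_d(q)$ provides.

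With this in hand I would construct the path by scheduling, pairing height-decreases in one tree against height-increases in another. Concretely, designate a reference coordinate and feed the other coordinates home along their tree-geodesics, letting the reference coordinate absorb the complementary height changes: whenever a processed coordinate steps down the reference steps up, and vice versa. The reference coordinate can always comply, since a forced height-decrease follows its unique predecessor while a height-increase may be sent along a fixed choice of successor, so the process never stalls. If the reference coordinate is one for which $v_i=w_i$, routing it along a single vertical line through that common vertex makes it return home automatically once the batch of other coordinates is exhausted, because the net height change of the batch is $\sum_{i}(b_i-a_i)=0$; in that case the path uses exactly $\sum_i d_{T_i}(v_i,w_i)$ edges.

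The main obstacle is the ordering constraint that within each tree every height-decrease must precede every height-increase, which forbids a ``free'' pairing: a perfect pairing would use only $A = \tfrac12 \sum_i d_{T_i}(v_i,w_i)$ edges, but the ordering can force backtracking, and when \emph{no} coordinate satisfies $v_i=w_i$ one cannot keep the reference coordinate on a fixed line and still land every coordinate on target. The crux is therefore to show that the net displacement of the reference coordinate, together with any backtracking forced by the ordering, can always be absorbed so that the total stays within $\sum_i d_{T_i}(v_i,w_i)$; the factor-of-two cushion between the claimed bound and the perfectly-paired count $\tfrac12\sum_i d_{T_i}(v_i,w_i)$ is precisely what guarantees enough room, and verifying that it always does is the heart of the argument.
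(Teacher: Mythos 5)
Your first three paragraphs are sound: the edge structure, the counts $a_i$, $b_i$ with $\sum_i a_i = \sum_i b_i$, and the special case in which some coordinate satisfies $v_r = w_r$ (route that coordinate along a fixed vertical line; position on a line is determined by height, and the net compensation is zero, so it returns home) are all correct. But the lemma does not reduce to that special case, and your final paragraph concedes rather than closes the general one: when every coordinate must move, you say that verifying the bound ``is the heart of the argument'' and offer only the heuristic that the factor-of-two cushion over the perfectly-paired count guarantees enough room. That is a genuine gap, not a detail. Nothing in your scheduling scheme controls \emph{where} the reference coordinate sits after the other coordinates have been delivered: its height is forced to be $h_r(w)$ (since $\sum_i h_i = 0$), but its actual vertex can drift arbitrarily far from the geodesic from $v_r$ to $w_r$ --- as far as the total amount of compensation already performed --- so the remaining correction can cost much more than $d_{T_r}(v_r,w_r)$, and performing it forces new compensation in other trees, displacing them in turn. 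The ``cushion'' heuristic never bounds this cascade, and indeed the bound $\sum_i d_{T_i}(v_i,w_i)$ is attained exactly in some configurations, so there is no slack to absorb an uncontrolled error.

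The paper's proof is your construction plus precisely the two devices you are missing. After translating $v$ to the origin $o$, it chooses the reference tree $T_k$ to be one in which $h_k(w)$ is \emph{minimal}, so that $h_k(w) \leq 0$, and it imposes the rule that whenever the reference coordinate must ascend from negative height, it stays on the ray from $o_k$ toward the distinguished end $\omega_k$. Since descents are deterministic (unique predecessors), this rule forces the reference coordinate to lie on that ray whenever its height is nonpositive; in particular, once the other coordinates are all delivered, it sits on the ray at height $h_k(w)$. That vertex lies \emph{on} the geodesic from $o_k$ to $w_k$, because that geodesic begins by descending this same ray down to $o_k \curlywedge w_k$, whose height is at most $h_k(w)$. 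Hence the remaining correction in $T_k$ costs at most $d_{T_k}(o_k,w_k)$, giving the claimed total. Moreover, this correction starts and ends at the same height $h_k(w)$, so it consists of equally many descents and ascents; its compensation in one other tree is therefore an ascent followed by an equal descent, which self-cancels and leaves that coordinate at its target. Some such mechanism --- a canonical choice of reference tree together with an ascent rule that pins the reference coordinate to a known geodesic --- is exactly what your proposal needs in order to become a proof.
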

\begin{proof}
We may assume $v$ is the origin $o$, since any path between vertices may
be translated via isometry to a path from the origin.
We will now construct a path in $\dl_d(q)$ from $o$ to $w$ that has length at most 
$ \sum_{i=0}^{d-1} d_{T_i}(o_i, w_i)$.

Let $k$ be the index of a tree such that $h_k(w)$ is minimal. Then
$h_k(w) \leq 0$. For each tree $T_i$ other than $T_k$, in turn, follow the
path from $o_i$ to $w_i$, 
always compensating in tree $T_k$ (i.e. moving up in $T_k$ when moving down
in $T_i$, and vice versa),  following the rule that when the
current vertex in $T_k$ has negative height and we must move up in
$T_k$, we choose to stay on the ray from $o_k$ to the distinguished
end $\omega_k$.
After all trees other than
$T_k$ have been so traversed, a total distance of $\sum_{i=0,i\neq
k}^{d-1} d_{T_i}(o_i, w_i)$ has been traveled. At this point, the
current vertex $x \in T_k$ lies
at height $h_k(w)$ and along the ray from $o_k$ to $\omega_k$, 
implying that $x$ lies on the geodesic from $o_k$ to $w_k$; and
so
$d_{T_k}(x, w_k) \leq d_{T_k}(o_k, w_k)$. Thus we can move from $o$
to $w$ taking no more than $\sum_{i=0}^{d-1} d_{T_i}(o_i,w_i)$ steps. 
We compensate for the steps moving $T_k$ into position in one other tree.
 Those compensating steps will undo themselves, and the vertex in that
tree at the end will be the same as it was before.
\end{proof}

\subsection{Asymptotic equivalence classes}
\label{asymptoticclassessection}

\begin{definition}
\label{projectiondef}
We can project a path $\gamma = (v_0, v_1, \dots, v_n)$ through
$\dl_d(q)$ to a tree $T_j$. Let 
$v_i = (x_{i,0}, x_{i,1}, \dots, x_{i,{d-1}})$ be the $i^{th}$ vertex along
$\gamma$. Then $\gamma^{(j)}$ is the corresponding path  
$(x_{0,j}, x_{1,j}, \dots, x_{n,j})$ through
$T_j$.  We will refer to $\gamma^{(j)}$ as the \emph{projection} 
of $\gamma$ to tree $T_j$
\end{definition}

For the rest of this section, we restrict our attention to 
$\dl_2(q)$, though the
notation and terminology we use will extend to the general case.

Let $p$ be a path between vertices $v$ and $w$ in $\dl_2(q)$. 
Thinking of $p$ as a sequence $(v
= v_0,  v_1, \dots, v_n = w)$ of vertices of
$\dl_2(q)$, we refer to a
subsequence $v_i,  v_{i+1}, 
v_{i+2}$ such that $h_j(v_i) = h_j(v_{i+1}) + 1 = h_j(v_{i+2}) $ as a {\em
turn} in $p$; a {\em bottoming out} in the $j^{th}$ tree. In the lamp
stand interpretation for $d=2$, bottoming out corresponds with the
lamplighter turning around and moving in the opposite direction along
the row of lamps: 
if the path bottoms out in $T_0$, then the lamplighter stops moving to the left (towards $-\infty$) 
along the lamp stand and begins moving to the right, and vice versa for $T_1$.

In \cite[Figure 1]{sprawl} it is shown that a geodesic in $\dl_2(q)$
has at most two turns.  However, the case of geodesic rays is
simpler.

\begin{lemma}
There are no 2-turn geodesic rays in $\dl_2(q)$. 
\label{no2turnraysobs}
\end{lemma}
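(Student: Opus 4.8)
The plan is to argue by contradiction: suppose $\gamma$ is a geodesic ray with exactly two turns. Since each edge changes $h_0$ by $\pm 1$, the function $n \mapsto h_0(\gamma(n))$ is a $\pm 1$ walk on $\integers$ whose local extrema are precisely the turns of $\gamma$ (a bottoming out in $T_0$ is a local minimum, and since $h_1 = -h_0$, a bottoming out in $T_1$ is a local maximum). Two turns therefore force this walk to follow one of exactly two profiles: increase--decrease--increase, or decrease--increase--decrease. Because the coordinate swap $(x_0,x_1)\mapsto(x_1,x_0)$ is an automorphism of $\dl_2(q)$ interchanging the two trees (and hence negating $h_0$), it suffices to treat the first profile. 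Write $a$ for the height at the first turn (the peak) and $b<a$ for the height at the second turn (the valley). Reading the walk off, the peak is reached at step $a$, and a short computation shows that the point $\gamma(n)$ lying at height $P$ on the final monotone ray satisfies $n = 2a - 2b + P$. Since the ray is infinite and has no turn after the valley, this final phase is monotone and infinite, so $P\to+\infty$; fix such an $n$ with $P$ large.

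The heart of the argument is to produce a strictly shorter path from $o$ to $\gamma(n)$ using the lamp-stand description. Along $\gamma$ the lamplighter occupies exactly the positions in the interval $[\min(0,b),P]$, so every lit lamp of $\gamma(n)$ lies in this interval. I therefore route a new path in which the lamplighter first moves left from $0$ to $\min(0,b)$ without altering any lamp, and then moves right to $P$, on this single rightward pass setting each lamp it crosses to the state required by $\gamma(n)$ (for each $j\in\integers_q$ the generator $a^j t$ lets a single pass place a crossed lamp into any desired state). This walk ends at lamplighter position $P$ with the identical lamp configuration, hence at the vertex $\gamma(n)$, and its length is $P$ if $b\ge 0$ and $P-2b$ if $b<0$. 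In either case the length is less than $n=2a-2b+P$, by $2(a-b)>0$ or by $2a>0$ respectively, so $d(o,\gamma(n))<n$, contradicting that $\gamma$ is geodesic.

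The main obstacle is the verification in the second paragraph that the rerouted walk terminates at exactly $\gamma(n)$: this requires knowing that the support of the lamp configuration is contained in the covered interval $[\min(0,b),P]$---which holds because lamps change only at positions the original lamplighter visited, and $\gamma$ never ventures left of $\min(0,b)$ nor (once $P>a$) right of $P$---together with the observation that a single monotone pass suffices to realize any target state at each crossed lamp. A minor point to record is the reduction by the coordinate-swap symmetry, which legitimately turns the decrease--increase--decrease profile into the one analyzed, and the remark that the cited bound of at most two turns for geodesic segments \cite{sprawl} guarantees these are the only profiles to consider.
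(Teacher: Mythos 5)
Your argument is correct, and it establishes the lemma by the same underlying strategy as the paper -- exhibit a strictly shorter path to a point on the final monotone phase -- but it executes that strategy in a different model. The paper fixes the specific vertex $\gamma(k+2l)$ (in your notation, the point of the final ascending phase at height $P=a$) and builds the shortcut intrinsically in $\dl_2(q)$: the competing path is chosen so that its projection to $T_{1-i}$ is the tree geodesic from $o_{1-i}$ to $\gamma^{(1-i)}(k+2l)$, and the verification that it lands on the correct vertex uses uniqueness of descending paths in $T_i$, which is the paper's substitute for your support-containment check. The two constructions produce essentially the same path viewed in two languages: your case $b\ge 0$ of length $P$ is the paper's case $k\ge l$ of length $d=k$, your case $b<0$ of length $P-2b$ is the paper's case $l>k$ of length $d=2l-k$, and translated into lamps the paper's path is exactly ``walk left to the lowest visited position, then sweep right setting lamps'' (the paper even permits arbitrary switching on the leftward leg, since the forced descent in $T_i$ on the way back corrects it). What your version buys is transparency: the word-length count makes it immediate that two turns waste $2(a-b)$ or $2a$ steps, and the freedom to take $P$ large rather than $P=a$ costs nothing. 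What it gives up is independence from the group structure: you invoke the identification of $\dl_2(q)$ with a Cayley graph of $L_q$, whereas the paper's proof uses only the definition of $\dl_2(q)$ inside $T_0\times T_1$, consistent with its aim of treating Diestel--Leader graphs (including unequal valences) without reference to any group. Two minor points: your appeal to the two-turn bound for geodesic segments is unnecessary, since a ray with exactly two turns has one of your two profiles for purely combinatorial reasons; and the lamp support actually lies in $[\min(0,b),P-1]$, because the lamp at position $p$ is switched only when the edge between positions $p$ and $p+1$ is crossed -- which is precisely what guarantees that your rightward sweep, crossing lamps $\min(0,b),\dots,P-1$, can realize every possibly-lit lamp.
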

\begin{proof}
Let $\gamma$ be a ray in $\dl_2(q)$ emanating from $o$ with two turns
and no back-tracking.  Then $\gamma$ ``bottoms out'' once in $T_i$,
$i\in\{0,1\}$, and then once in $T_{1-i}$.  Let $k \in \integers$, $k > 0$
be the distance traveled before the first turn, so that $\gamma$
bottoms out in $T_i$ at $\gamma(k)$ with heights $h_i(\gamma(k)) = -k$
and $h_{1-i}(\gamma(k)) = k$.  Let $l \in \integers$, $l > 0$, be the
distance traveled before the second turn, so that $\gamma$ bottoms out
in $T_{1-i}$ at $\gamma(k+l)$ with heights $h_i(\gamma(k+l)) = l-k$ and
$h_{1-i}(\gamma(k+l)) = k-l$. Because $\gamma$ has exactly two turns,
$\gamma$ then proceeds to descend eternally in $T_i$ while ascending
in $T_{1-i}$.  Consider the vertex $\gamma(k+2l)$. In $T_i$,
$\gamma$ descends to $\gamma^{(i)}(k)$, then ascends for $l$ edges
followed by a descent of the same distance, so that $\gamma^{(i)}(k) =
\gamma^{(i)}(k + 2l)$. 
 
We now show there is a path through $\dl_2(q)$ from $o$ that arrives
at $\gamma(k+2l)$ in shorter time. It may be helpful to refer to
Figure \ref{dlreplace2turnfig}, which provides an example.
Let $d$ be the distance in
$T_{1-i}$ from $o_{1-i}$ to $\gamma^{(1-i)}(k+2l)$. Then either $d= k$ if
$k \geq l$ or  $d=2l-k$ if $l > k$.  Choose a path $\gamma'$ from
$o$ to $\gamma(k+2l)$ so that $\gamma'^{(1-i)}$ is the geodesic in
$T_{1-i}$ from $o_{1-i}$ to $\gamma^{(1-i)}(k+2l)$, and so that
$\gamma^{(i)}$ changes height appropriately with each edge in
$\gamma^{(1-i)}$. The actual choice of $\gamma^{(i)}$ is irrelevant:
regardless of how it
ascends, it must then descend to $\gamma^{(i)}(k+2l) =
\gamma^{(i)}(k)$.  Thus
$\gamma'(d) = \gamma(k+2l)$ and $d < k+2l$, so $\gamma$ cannot be
a geodesic ray. 
\end{proof}

It is worth noting that because there are 2-turn geodesic paths in $\dl_2(q)$,
Lemma \ref{no2turnraysobs}
implies that $\dl_2(q)$ is not {\em geodesically complete}, i.e.
there are geodesics which cannot be extended to geodesic rays. This
fact is proved in \cite[Theorem 12]{steintaback}, where the authors demonstrate
that $\dl_d(q)$ has {\em dead-end elements}.

\begin{figure}
\includegraphics{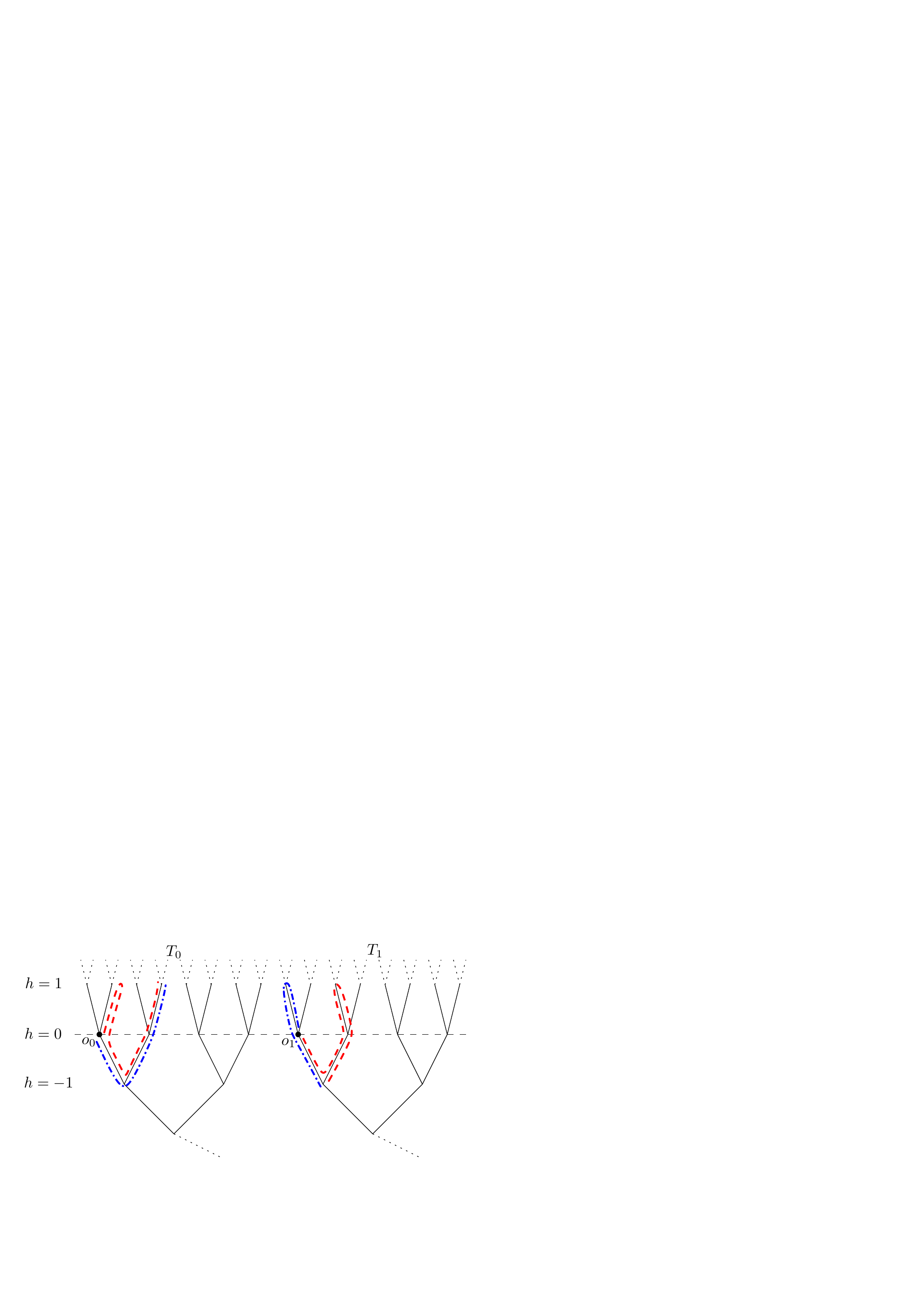}
\caption{Two paths in $\dl(2,2)$ having the same endpoints. A 1-turn
path which is geodesic, and a 2-turn path which is minimally
non-geodesic.}
\label{dlreplace2turnfig}
\end{figure}

Let $\gamma$ be a geodesic ray based at the origin vertex $o \in \dl_2(q)$,
and suppose $\gamma$  
first descends to height $-h \leq 0$ in $T_i$ before ascending 
eternally in $T_i$. Then $\gamma^{(i)}|_{[0,h]}$ is fixed, while
$\gamma^{(1-i)}|_{[0,h]}$ follows one of $q^h$ paths. On the other hand,
$\gamma^{(i)}|_{[h,\infty]}$ chooses some endpoint other than $\omega_i$
of $T_i$, while $\gamma^{(1-i)}|_{[h,\infty]}$ must approach
$\omega_{1-i}$.

\begin{lemma}
\label{sameheightraysobs}
If $\gamma$ and $\gamma'$ are geodesic rays in $\dl_2(q)$ emanating from 
$o$ which both descend to
height $-h \leq 0$ in $T_i$ before turning, then they are asymptotic if
and only if $\gamma^{(i)} = \gamma'^{(i)}$. 
In this case, they eventually merge and upon merging, never split.
\end{lemma}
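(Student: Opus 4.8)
The plan is to reduce everything to the projection distance estimates of Observation~\ref{lowerboundondistobs} and Lemma~\ref{upperboundondistobs}. The key simplifying fact in the $d=2$ case is that every edge of $\dl_2(q)$ moves simultaneously in \emph{both} trees, so each projection $\gamma^{(j)}$ is a unit-speed path in $T_j$. Combining the two bounds, $\gamma$ and $\gamma'$ are asymptotic if and only if both $d_{T_0}(\gamma^{(0)}(t),\gamma'^{(0)}(t))$ and $d_{T_1}(\gamma^{(1)}(t),\gamma'^{(1)}(t))$ stay bounded as $t\to\infty$; I would make this equivalence the backbone of the argument.

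First I would record the rigid structure forced by the single-turn hypothesis (legitimized by Lemma~\ref{no2turnraysobs}), following the paragraph preceding the statement. In $T_i$, both rays follow the \emph{unique} descending path of length $h$ from $o_i$ toward $\omega_i$, so $\gamma^{(i)}$ and $\gamma'^{(i)}$ agree on $[0,h]$ and can differ only during their eternal ascent, where each step chooses one of $q$ successors. In $T_{1-i}$, both projections rise to some height-$h$ vertex at time $h$ and then descend eternally along the unique predecessor chain toward $\omega_{1-i}$.

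For the direction ``asymptotic $\Rightarrow \gamma^{(i)}=\gamma'^{(i)}$'' I would argue the contrapositive. If the two projections to $T_i$ differ, then, since they agree through the bottoming-out at time $h$, they must first split at a common vertex during the ascent by selecting distinct successors. Because the ascent is height-monotone (there are no further turns), the two paths then lie in disjoint subtrees and can never reconverge, so $d_{T_i}(\gamma^{(i)}(t),\gamma'^{(i)}(t))$ grows linearly and without bound. By Observation~\ref{lowerboundondistobs}, $d(\gamma(t),\gamma'(t))\to\infty$, contradicting asymptoticity.

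For the converse, assume $\gamma^{(i)}=\gamma'^{(i)}$, so the $T_i$-distance is identically zero and it remains only to control $T_{1-i}$. Both projections reach height $h$ at time $h$, at vertices $u$ and $u'$, and then descend at unit speed, so for $t\ge h$ they occupy the common height $2h-t$ on the predecessor chains of $u$ and $u'$. These chains pass through $u\curlywedge u'$ and coincide below it; hence $d_{T_{1-i}}(\gamma^{(1-i)}(t),\gamma'^{(1-i)}(t))\le 2h$ for all $t$ and becomes $0$ once $2h-t$ drops to the height of $u\curlywedge u'$. Lemma~\ref{upperboundondistobs} then bounds $d(\gamma(t),\gamma'(t))$ by this quantity, so the rays are asymptotic; moreover both projections agree for all large $t$, which gives the eventual merging, and since they remain equal from that point on, the merged rays never split. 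I expect the main work to lie in this last direction, specifically in the merging-of-descending-rays step in $T_{1-i}$ and in the monotone-divergence claim for ascending paths in $T_i$, as that is where the underlying tree geometry is actually used.
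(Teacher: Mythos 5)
Your proof is correct and takes essentially the same approach as the paper's: agreement of the projections on the unique descending segment, divergence of the ascending $T_i$-projections combined with Observation \ref{lowerboundondistobs} for the ``only if'' direction, and merging of the descending $T_{1-i}$-projections for the ``if'' direction. The only cosmetic difference is that you track the meet $u \curlywedge u'$ (getting a slightly sharper merge time), where the paper simply notes that both $T_{1-i}$-projections return to $o_{1-i}$ at time exactly $2h$ and coincide thereafter.
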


\begin{proof}
First, assume that $\gamma^{(i)} = \gamma'^{(i)}$. Because
$h_{1-i}(\gamma^{(1-i)}(h)) = h_{1-i}(\gamma'^{(1-i)}(h)) = h$, and both
of these vertices  have $o_{1-i}$ as an ancestor, we have $\gamma^{(1-i)}(2h) =
\gamma'^{(1-i)}(2h) = o_{1-i}$. From this point on, since both
$\gamma^{(1-i)}$ and $\gamma'^{(1-i)}$ are descending in $T_{1-i}$, they
are the same.   Hence, $\gamma|_{[2h,\infty)} = \gamma'|_{[2h,\infty)}$. 
So $\gamma$ and $\gamma'$ merge at or before
distance $2h$, and they are asymptotic.

Now, assume that there exists $t$ such that $\gamma^{(i)}(t) \neq
\gamma'^{(i)}(t)$. Since both rays descend to height $-h$, we must have $t>h$. 
Since $\gamma$ and $\gamma'$ are geodesic rays bottoming out at height
$h$ in $T_i$, it follows that $\gamma^{(i)}|_{[h,\infty)}$ and
$\gamma'^{(i)}|_{[h,\infty)}$ are geodesic rays in $T_i$. By assumption
they are not equal, so since $T_i$ is a tree, they are not asymptotic
in $T_i$.  From Observation \ref{lowerboundondistobs}, $\gamma$ and
$\gamma'$ are not asymptotic.
\end{proof}

Furthermore, Observation \ref{lowerboundondistobs} also ensures the
following:

\begin{observation}
\label{differentheightraysobs}
Let $\gamma$ and $\gamma'$ be geodesic rays in $\dl_2(q)$. 
\begin{enumerate}
\item If $\gamma$ begins by descending in $T_i$, while $\gamma'$
begins by ascending in $T_i$, or vice-versa, then $\gamma$ and
$\gamma'$ are not asymptotic.

\item If $\gamma$ and $\gamma'$ are geodesic rays descending to 
heights $h$ and $h'$ respectively before turning, with $h \neq h'$, then 
$\gamma$ and $\gamma'$ are not asymptotic. 
\end{enumerate}
\end{observation}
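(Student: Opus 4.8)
The plan is to derive both parts from the lower bound $d(v,w)\ge \max_i d_{T_i}(v_i,w_i)$ of Observation \ref{lowerboundondistobs}: to prove two rays are not asymptotic it suffices to exhibit a single tree $T_j$ in which $d_{T_j}(\gamma^{(j)}(t),\gamma'^{(j)}(t))$ is unbounded in $t$. By Lemma \ref{no2turnraysobs} every geodesic ray has at most one turn, so each ray descends along the unique downward ray toward $\omega_i$ to some depth and then ascends eternally in exactly one tree; I will call that the \emph{ascending tree} of the ray and record the signed height $\phi_\gamma(t) := h_0(\gamma(t)) = -h_1(\gamma(t))$, a piecewise-linear function of slope $\pm 1$ with a single extremum. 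Thus $\phi_\gamma(t)\to+\infty$ when the ascending tree is $T_0$ and $\phi_\gamma(t)\to-\infty$ when it is $T_1$.

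For part (1), descending first in different trees means the two rays have different ascending trees, so their signed heights tend to opposite infinities; say $\phi_\gamma(t)\to+\infty$ and $\phi_{\gamma'}(t)\to-\infty$. Then in $T_0$ the tree distance dominates the difference of heights, so $d_{T_0}(\gamma^{(0)}(t),\gamma'^{(0)}(t)) \ge |\phi_\gamma(t)-\phi_{\gamma'}(t)| \to \infty$, and Observation \ref{lowerboundondistobs} gives the conclusion.

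Part (2) is the main obstacle: here the rays share an ascending tree $T_i$, so both signed heights run off to the \emph{same} infinity and the height-difference bound only yields the constant $2|h-h'|$. This forces me to track actual vertices rather than heights alone. Assume $h<h'$ and let $P_k$ denote the height-$(-k)$ vertex on the downward ray from $o_i$ toward $\omega_i$. Both projections descend along this common ray, $\gamma^{(i)}$ turning at $P_h$ and $\gamma'^{(i)}$ at the deeper vertex $P_{h'}$. A geodesic ray cannot retrace its descent, since doing so would force it to revisit $o$ (compare the $\gamma^{(1-i)}(2h)=o_{1-i}$ computation in Lemma \ref{sameheightraysobs}), so after bottoming out each ascends through a successor other than the one it arrived from. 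In particular $\gamma^{(i)}(t)$ lies in the subtree of $P_{h'}$ through the successor $P_{h'-1}$, while $\gamma'^{(i)}(t)$ lies in a different subtree of $P_{h'}$.

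Consequently the greatest common ancestor of $\gamma^{(i)}(t)$ and $\gamma'^{(i)}(t)$ in $T_i$ stabilizes at the fixed vertex $P_{h'}$ for all large $t$, while the two points have heights $t-2h$ and $t-2h'$ tending to $+\infty$. Hence $d_{T_i}(\gamma^{(i)}(t),\gamma'^{(i)}(t))$, being the sum of the two heights measured from the pinned ancestor $P_{h'}$, grows linearly in $t$, and Observation \ref{lowerboundondistobs} again shows the rays are not asymptotic. The two delicate points are the no-retrace claim and the verification that the common ancestor really is pinned at $P_{h'}$ rather than drifting upward; together these identify the shared ascending tree as the correct place to look, and explain why only the actual tree metric, not the difference of heights, detects the separation.
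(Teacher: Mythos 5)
Your part (2) argument is essentially sound (and is in fact far more detailed than the paper, which simply asserts that both parts follow from Observation \ref{lowerboundondistobs}). The problem is part (1). The claim ``descending first in different trees means the two rays have different ascending trees'' is false. A geodesic ray that begins by descending in $T_i$ need not bottom out in $T_i$: it may have no turn at all, descending in $T_i$ forever, in which case its ascending tree is $T_{1-i}$. Concretely, take $\gamma$ with zero turns, descending eternally in $T_i$ (hence ascending eternally in $T_{1-i}$), and take $\gamma'$ beginning with an ascent in $T_i$ that bottoms out in $T_{1-i}$ at depth $-h'<0$ and then ascends eternally in $T_{1-i}$. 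Here $\gamma$ begins by descending in $T_i$ and $\gamma'$ by ascending in $T_i$, yet both have ascending tree $T_{1-i}$, so $\phi_\gamma$ and $\phi_{\gamma'}$ tend to the \emph{same} infinity and your height bound only yields the constant $2h'$. Worse, in this configuration the $T_i$-projections also remain at bounded distance: $\gamma'^{(i)}$ climbs to height $h'$, then descends along the unique predecessor path back through $o_i$ and down the ray toward $\omega_i$, eventually tracking $\gamma^{(i)}$ at distance exactly $2h'$. So no argument carried out in $T_i$, whether via heights or via the tree metric, can detect non-asymptoticity in this case; part (1) as you prove it is simply not established for these mixed configurations.

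The repair is available with tools you already built: the mixed case is exactly your part (2) situation played out in the common ascending tree $T_{1-i}$, with turning depths $0$ and $h'$ (one ray ``turns'' at depth $0$, the other at depth $h'>0$); the no-retrace and pinned-ancestor argument then gives linear divergence of the $T_{1-i}$-projections. So part (1) should be split into the sub-case of genuinely different ascending trees, where your height argument works, and the mixed sub-cases, which must be delegated to the part (2) machinery. One smaller point: your justification of the no-retrace claim (``doing so would force it to revisit $o$'') literally applies only to a full retrace. The correct and simpler reason is that retracing even the first edge of the descent forces the coordinate in the opposite tree to descend back to its predecessor simultaneously, so the ray revisits the vertex it occupied two steps earlier, which already contradicts geodesicity; this also shows the first ascending edge after the turn must use a successor different from the one along which the ray descended, which is precisely what your pinned-ancestor argument needs.
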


Combining Lemmas \ref{no2turnraysobs}, \ref{sameheightraysobs} and Observation
\ref{differentheightraysobs}, we obtain the following description of 
asymptotic equivalence classes of geodesic rays in $\partial\dl_2(q)$:

\begin{theorem}
\label{asymptoticthm}

Two geodesic rays $\gamma$ and $\gamma'$ in $\dl_2(q)$ are asymptotic
if and only if their projections $\gamma^{(0)}, \gamma'^{(0)}$
approach the same end of $T_0$ and their projections $\gamma^{(1)},
\gamma'^{(1)}$ approach the same end of $T_1$.

\end{theorem}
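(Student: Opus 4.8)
The plan is to prove Theorem \ref{asymptoticthm} by reducing the ``if and only if'' statement to the structural results already assembled, splitting into cases according to the turning behavior of the two rays. The key preliminary observation is that, by Lemma \ref{no2turnraysobs}, every geodesic ray in $\dl_2(q)$ has at most one turn; equivalently, each ray first descends to some height $-h$ in one tree $T_i$ (possibly $h=0$, in which case the ray never descends) and thereafter ascends eternally in $T_i$ while descending eternally in $T_{1-i}$. Thus to each geodesic ray $\gamma$ we may unambiguously associate the pair $(\text{end of } T_0, \text{end of } T_1)$ determined by its projections $\gamma^{(0)}$ and $\gamma^{(1)}$, together with the turning tree $T_i$ and the turning height $h$. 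The theorem asserts that asymptoticity is detected precisely by the pair of ends.

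For the forward direction, I would argue the contrapositive: if the rays approach different ends in at least one tree, they are not asymptotic. Suppose first $\gamma^{(0)}$ and $\gamma'^{(0)}$ approach different ends of $T_0$. Since $T_0$ is a tree, geodesic rays approaching distinct ends diverge, so $d_{T_0}(\gamma^{(0)}(t), \gamma'^{(0)}(t)) \to \infty$; by Observation \ref{lowerboundondistobs} this forces $d(\gamma(t), \gamma'(t)) \to \infty$, so $\gamma$ and $\gamma'$ are not asymptotic. The identical argument handles the case of different ends in $T_1$. This direction is essentially immediate from the lower bound and the tree geometry.

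For the reverse direction, I would assume $\gamma^{(0)}, \gamma'^{(0)}$ approach the same end of $T_0$ and $\gamma^{(1)}, \gamma'^{(1)}$ approach the same end of $T_1$, and deduce asymptoticity. The main point is to show that both rays must turn in the same tree at the same height, so that Lemma \ref{sameheightraysobs} applies. If $\gamma$ turns in $T_i$ and $\gamma'$ turns in $T_{1-i}$ (with $i \neq 1-i$), then in tree $T_i$ one ray descends before ascending while the other ascends immediately, and by Observation \ref{differentheightraysobs}(1) they are not asymptotic; but the two rays are supposed to share both ends, and I must check this is incompatible with the shared-end hypothesis. The cleanest route is to note that the end approached in the \emph{descending} tree is always the distinguished end $\omega$, while the end approached in the \emph{ascending} tree is some other end; so the turning tree of a ray is recovered from its pair of ends (it is the tree whose associated end is \emph{not} $\omega$, in the generic case). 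Hence sharing both ends forces $\gamma$ and $\gamma'$ to turn in the same tree $T_i$. Once the turning tree agrees, Observation \ref{differentheightraysobs}(2) forces the turning heights to agree as well, after which Lemma \ref{sameheightraysobs} gives asymptoticity directly, since sharing the end of $T_i$ means $\gamma^{(i)} = \gamma'^{(i)}$ (two geodesic rays in a tree from the same basepoint to the same end coincide).

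The step I expect to be the main obstacle is the boundary/degenerate case $h=0$, where a ray does not descend at all and the ``turning tree'' is ambiguous: such a ray ascends in $T_i$ and descends in $T_{1-i}$ from the very start, so both its projected ends could in principle be non-$\omega$ or the classification of which tree is ``ascending'' needs care. I would handle this by treating the $h=0$ rays uniformly as the degenerate endpoint of the one-turn family and checking directly that the end-pair still determines the tree in which the ray ascends immediately; the shared-end hypothesis then pins down the common ascending tree and the common projection there, and descent in the other tree toward $\omega_{1-i}$ is automatic, recovering asymptoticity via the same merging argument as in Lemma \ref{sameheightraysobs}. Aside from this degenerate case, the proof is a bookkeeping synthesis of the three earlier results, so the write-up should be short.
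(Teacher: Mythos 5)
Your proposal is correct and takes essentially the same route as the paper, whose entire ``proof'' of this theorem is the one-line synthesis of Lemmas \ref{no2turnraysobs}, \ref{sameheightraysobs} and Observation \ref{differentheightraysobs}; your write-up fills in the same bookkeeping (contrapositive via Observation \ref{lowerboundondistobs} for one direction, reduction to Lemma \ref{sameheightraysobs} for the other), and the $h=0$ case you worry about is harmless for exactly the reason you give, since a never-turning ray descends forever in $T_{1-i}$ and hence has end $\omega_{1-i}$ there. One small correction: in the reverse direction, Observation \ref{differentheightraysobs}(2) cannot ``force the turning heights to agree''---it only says unequal heights imply non-asymptotic, which is of no use when asymptoticity is what you are trying to prove---but the parenthetical justification you supply is the valid one: uniqueness of the geodesic ray in $T_i$ from $o_i$ to a given end gives $\gamma^{(i)}=\gamma'^{(i)}$, which simultaneously yields equal turning heights and the hypothesis of Lemma \ref{sameheightraysobs}, so simply drop the citation of (2).
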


\begin{corollary}
\label{setcor}

The family of geodesic rays whose projections do not approach $\omega_i \in
\partial T_i$, $i\in\{0,1\}$, is in one-to-one correspondence with a
Cantor set minus the point corresponding to $\omega_i$.  Hence, as a
set, $\partial \dl_2(q)$ is a disjoint union of two deleted Cantor
sets: 
\[ \partial \dl_2(q) = \left(\left(\partial T_0 - \omega_0\right) \times
\{\omega_1\}\right) \coprod \left(\left(\partial T_1 - \omega_1\right) \times
\{\omega_0\}\right) \] 

\end{corollary}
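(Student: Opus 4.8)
The plan is to read the set $\partial\dl_2(q)$ directly off the asymptotic classification already established. By Theorem \ref{asymptoticthm}, the map sending an asymptotic class $[\gamma]$ to the pair of limiting ends $(\xi_0,\xi_1) = (\lim\gamma^{(0)}, \lim\gamma^{(1)}) \in \partial T_0 \times \partial T_1$ is well defined (asymptotic rays share both ends) and injective (rays with matching ends in both trees are asymptotic). So it suffices to identify the image of this map and then recognize each factor as a Cantor set with one point removed.

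First I would pin down the image using the defining constraint $h_0(v)+h_1(v)=0$, which gives $h_0(\gamma(n)) = -h_1(\gamma(n))$ for all $n$. A path in $T_i$ converging to an end approaches $\omega_i$ precisely when its height tends to $-\infty$, and approaches an end other than $\omega_i$ precisely when its height eventually increases to $+\infty$. By Lemma \ref{no2turnraysobs} each geodesic ray $\gamma$ has at most one turn, so each projection $\gamma^{(i)}$ is eventually monotone in height and exactly one of these two alternatives holds for it. Since the two heights are negatives of one another, they cannot both tend to $-\infty$, nor both to $+\infty$; hence exactly one projection approaches its distinguished end. This confines the image to $((\partial T_0 \setminus \{\omega_0\}) \times \{\omega_1\}) \cup (\{\omega_0\} \times (\partial T_1 \setminus \{\omega_1\}))$, a genuine disjoint union since the only possible overlap $(\omega_0,\omega_1)$ is excluded.

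Next I would establish realization. Given $\xi_0 \in \partial T_0 \setminus \{\omega_0\}$, the geodesic ray in $T_0$ from $o_0$ to $\xi_0$ descends to some minimal height $-h$ and then ascends eternally toward $\xi_0$; following the construction described just before Lemma \ref{sameheightraysobs}, I build a ray $\gamma$ in $\dl_2(q)$ whose projection $\gamma^{(0)}$ is exactly this ray, with $\gamma^{(1)}$ ascending to height $h$ during the descent phase and descending toward $\omega_1$ thereafter. Observation \ref{lowerboundondistobs} together with a length count confirms $\gamma$ is a geodesic ray with $(\lim\gamma^{(0)}, \lim\gamma^{(1)}) = (\xi_0, \omega_1)$, and Lemma \ref{sameheightraysobs} shows the assignment $\xi_0 \mapsto [\gamma]$ is a bijection onto the classes whose $T_0$-projection avoids $\omega_0$. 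The symmetric construction handles $\partial T_1 \setminus \{\omega_1\}$, while Observation \ref{differentheightraysobs}(1) keeps the two families disjoint.

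Finally, since $T_i$ is the regular tree of valence $q+1 \geq 3$, its space of ends $\partial T_i$ is (as a set, and in fact topologically) a Cantor set, so $\partial T_i \setminus \{\omega_i\}$ is a Cantor set with one point deleted; assembling the two families gives the stated disjoint union. I expect the realization step to be the main obstacle: one must verify that the constructed paths are genuinely geodesic rather than merely paths to the correct ends, and that, as $\xi_0$ ranges over all ends $\neq \omega_0$, the varying descent depth $h$ and the $q^h$ branch choices in $T_1$ yield exactly one asymptotic class per end. Both points are controlled entirely by the lower bound of Observation \ref{lowerboundondistobs} and by Lemma \ref{sameheightraysobs}, so the argument should close without introducing any new machinery.
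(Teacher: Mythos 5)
Your proposal is correct and follows essentially the same route as the paper: the corollary there is read off directly from Theorem \ref{asymptoticthm} together with the discussion preceding Lemma \ref{sameheightraysobs} (a ray descends to some depth $-h$ in one tree, then ascends to a non-distinguished end there while approaching $\omega$ in the other), which is precisely your injectivity, image-characterization, and realization steps made explicit. The extra care you take with surjectivity and with verifying the constructed rays are geodesic (via Observation \ref{lowerboundondistobs}) is detail the paper leaves implicit, not a different argument.
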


It is perhaps not surprising that $\partial \dl_2(q)$ should be so closely
related to a Cantor set, given that $\dl_2(q)$ is a one dimensional
subset of a product of trees.
Lemma \ref{sameheightraysobs} leads to the picture in Figure
\ref{asymptoticclassfig} of a typical element of $\partial \dl_2(2)$.

\begin{figure}
\includegraphics[scale=.55]{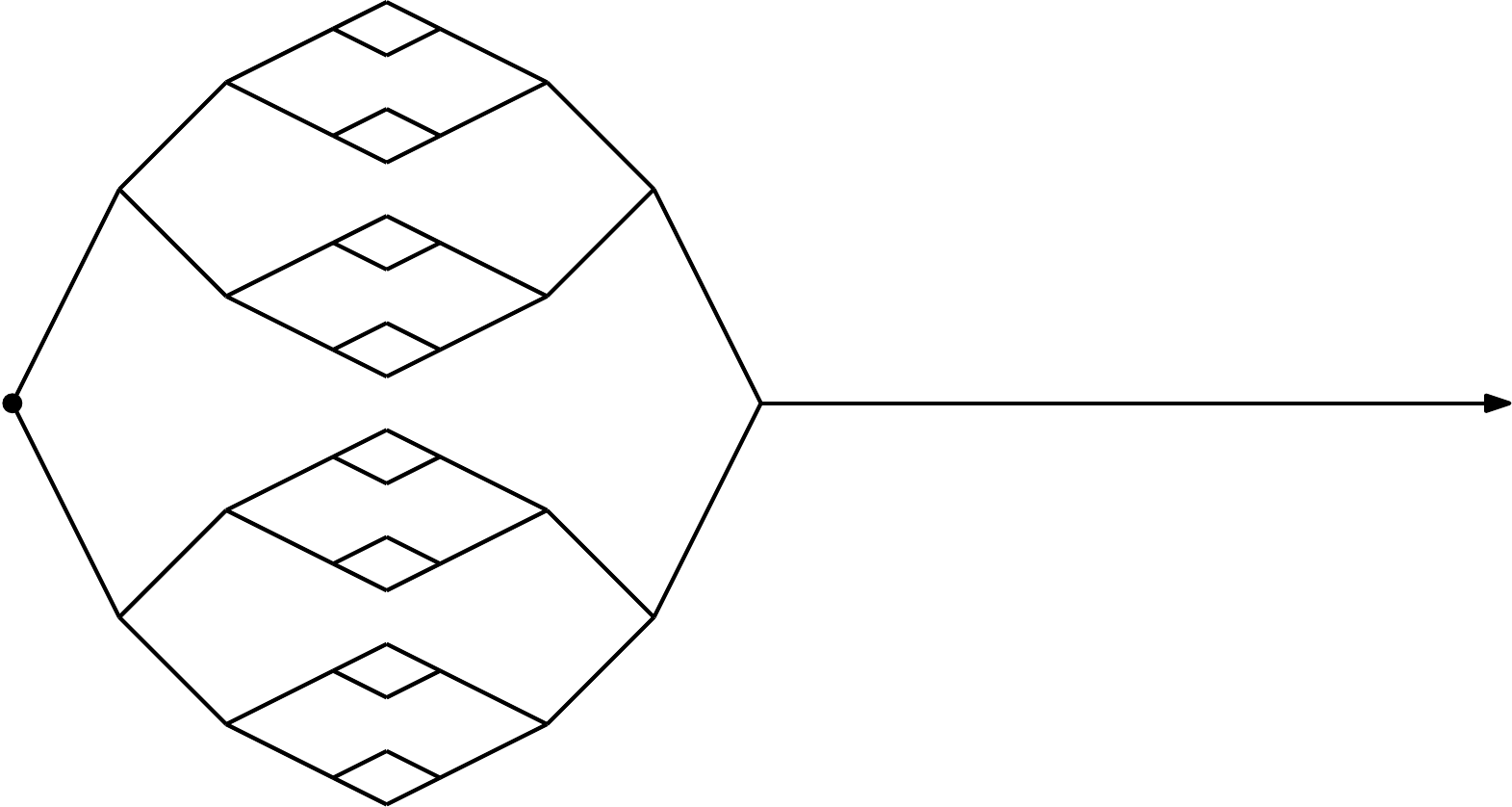}
\caption{A typical asymptotic equivalence class in $\dl_2(2)$.}
\label{asymptoticclassfig}
\end{figure}

\subsection{Lamp stand interpretation of $\partial\dl_2(2)$}
\label{lampstandsub}

We can understand the visual boundary using the lamp stand. For each geodesic ray starting at the base point, the lamplighter starts at position 0 on an unlit row of lamps. He starts moving in a direction (always the same direction as the projection of the ray to $T_0$), perhaps lighting lamps along the way. If the ray ``bottoms out'' in one tree, then the lamplighter will turn around and proceed in the other direction, again possibly switching lamps along the way. So each element of the visual boundary corresponds with a lamp stand with the lamplighter standing at either $+\infty$ or $-\infty$. If the lamplighter is at $+\infty$, then the set of lit lamps (if it is non-empty) has a minimum. If the lamplighter is at $-\infty$, then the set of lit lamps (if it is non-empty) has a maximum.

Notice that if the lamplighter turns, he can reset the lamps he has already passed to undo any lighting that he has done or to light any lamps that he missed the first time. In this way, we can see how the ``pre-turn'' segment of the ray does not affect the asymptotic equivalence class.

Since the height of the associated vertex in tree $T_0$ is the position of the lamplighter, this means that the points in $(\partial T_0 - \omega_0)\times \omega_1$ have the lamplighter at $+\infty$ and the points in $(\partial T_1 - \omega_1)\times \omega_0$ have the lamplighter at $-\infty$.

The lamp stand interpretation for $\partial\dl_2(q)$ is essentially the same, except that the lamps can take on $q$ different states, instead of simply on and off.

\subsection{Action of $L_2$ on $\partial\dl_2(2)$}
\label{actionsub}

We can compute the action of the lamplighter group $L_2$ on the visual boundary $\partial\dl_2(q)$ by using the lamp stand interpretation in Section \ref{lampstandsub}. For $\gamma$ a geodesic ray in $\dl_2(q)$, we write $[\gamma]$ for its asymptotic equivalence class in $\partial\dl_2(q)$. For $g\in L_2$ and $[\gamma]\in \partial\dl_2(2)$, to compute the lamp stand for $g\cdot[\gamma]$, start with the lamp stand for $g$. Then, have the lamplighter perform the lighting prescribed by $[\gamma]$, but starting from the lamp lighter's end position in $g$ instead of at position 0. See Figure \ref{actionfig} for an example.

\begin{figure}[htbp]
\begin{center}
\includegraphics[width=3.5in]{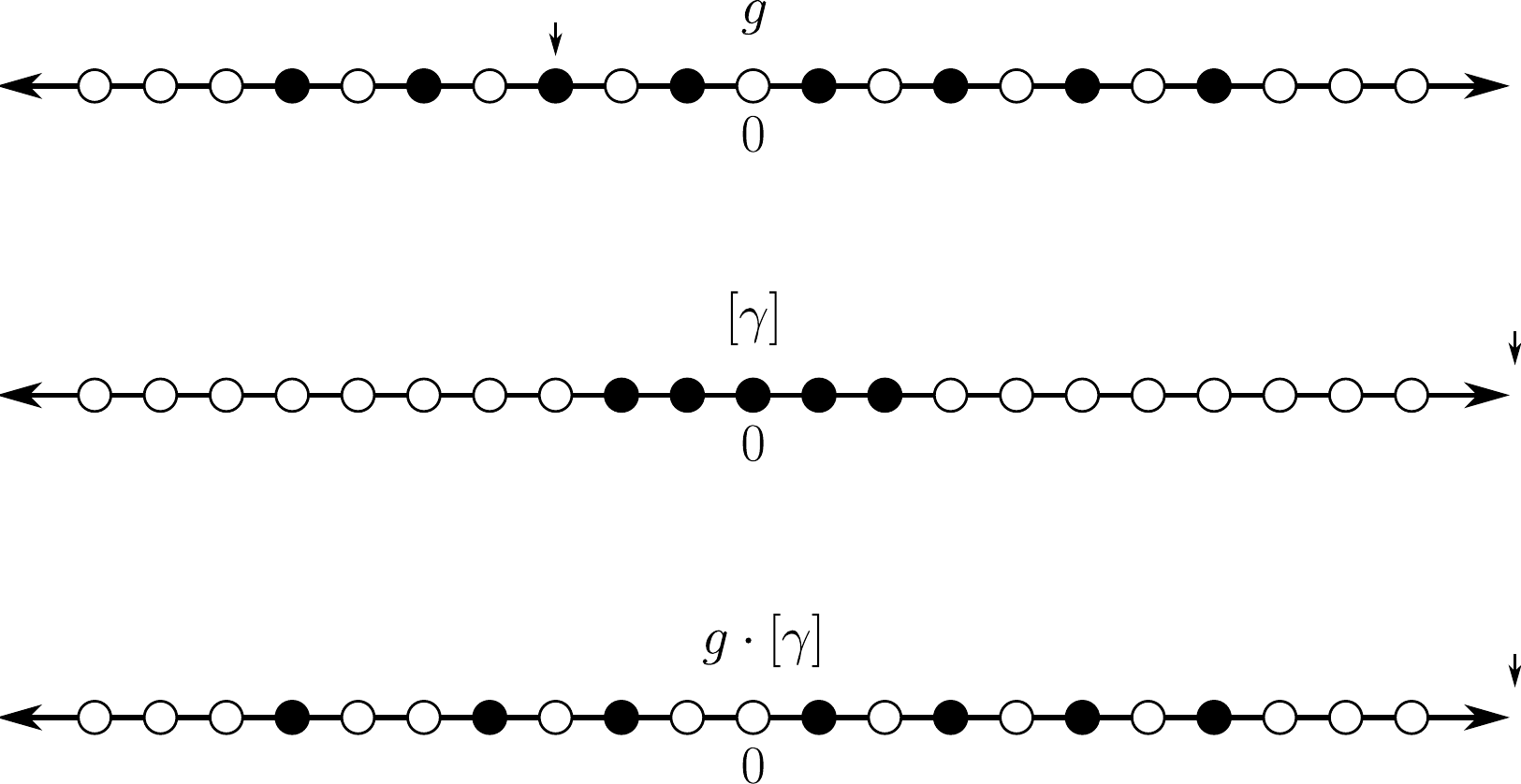}
\caption{The action of an element $g\in L_2$ on an asymptotic equivalence class $[\gamma]\in\partial\dl_2(2)$}
\label{actionfig}
\end{center}
\end{figure}

Notice that for any $[\gamma] \in (\partial T_0 -\omega_0)\times \omega_1$ and any $g\in L_2$, we will have $g\cdot[\gamma] \in (\partial T_0 -\omega_0)\times \omega_1$. Similarly, $(\partial T_1 -\omega_1)\times \omega_0$ is also invariant under the action of $L_2$.

\begin{observation}
\label{genactionobs}

The action of the generators $t$ and $at$ on the lamp stand model for $\partial\dl_2(2)$ is as follows:\begin{itemize}
	\item $t$ shifts the lit lamps one spot to the right (i.e. towards $+\infty$)
	\item $t^{-1}$ shifts the lit lamps one spot to the left (i.e. towards $-\infty$)
	\item $at$ shifts the lamps one spot to the right and then switches the lamp located at 0.
	\item $(at)^{-1}$ switches the lamp located at 0 and then shifts the lamps one spot to the left.
\end{itemize}

\end{observation}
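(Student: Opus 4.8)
The plan is to read each of the four maps directly off the description of the action in Section \ref{actionsub}, specializing the general lamp stand composition rule to the group elements $t$, $t^{-1}$, $at$, and $(at)^{-1}$. Recall that to compute $g\cdot[\gamma]$ one takes the lamp stand of $g$ and then has the lamplighter carry out the lighting prescribed by $[\gamma]$, but starting from $g$'s terminal position rather than from $0$. Thus the only data I need from each generator $g$ are (i) the finite lamp configuration of $g$ and (ii) the position at which $g$ leaves the lamplighter: the first contributes a fixed toggling of bulbs, while the second contributes a translation of the configuration coming from $[\gamma]$.

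First I would handle $t$ and $t^{-1}$. The element $t$ lights no bulbs and leaves the lamplighter at position $1$, so performing the lighting of $[\gamma]$ from position $1$ rather than $0$ reproduces the configuration of $[\gamma]$ translated one step toward $+\infty$ with no bulbs toggled; this is the first bullet. The element $t^{-1}$ lights nothing and terminates at $-1$, so the identical computation yields the leftward shift of the second bullet. Since a finite translation cannot move the lamplighter between $+\infty$ and $-\infty$, these shifts preserve the two pieces of $\partial\dl_2(2)$ from Corollary \ref{setcor}, as already observed at the start of Section \ref{actionsub}.

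Next I would treat $at$ and its inverse. The lamp stand of $at$ has lamp $0$ lit and the lamplighter at position $1$, so $at\cdot[\gamma]$ translates $[\gamma]$ one step to the right (from the terminal position $1$) and then superimposes—that is, toggles in $\mathbb{Z}_2$—the single lit bulb at $0$, giving the third bullet. For $(at)^{-1}$ I would first determine its lamp stand: moving in the negative direction, the lamplighter steps from $0$ to $-1$ and switches the bulb upon arrival, so $(at)^{-1}$ lights lamp $-1$ and terminates at $-1$. The composition rule then shifts $[\gamma]$ one step to the left and toggles lamp $-1$; since toggling lamp $0$ and \emph{then} shifting left carries the toggled bulb from $0$ to $-1$, this agrees with the fourth bullet.

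The four bookkeeping computations above are routine; the one place calling for care is the interface between the composition rule, which is stated for honest group elements with finitely many lit lamps, and the boundary elements $[\gamma]$, whose lamp stands may have infinitely many lit bulbs and a lamplighter ``at'' $\pm\infty$. The point to verify is that translating such a configuration by the finite amount prescribed by $g$ is well defined, again satisfies the min/max finiteness condition characterizing boundary elements, and is independent of the chosen representative ray—all of which follow from the well-definedness of the action established in Section \ref{actionsub} together with Theorem \ref{asymptoticthm}. As a consistency check I would confirm that the described maps compose correctly: the maps for $at$ and $(at)^{-1}$ should be mutually inverse (the two toggles of the relevant bulb cancel after the compensating shifts), as should those for $t$ and $t^{-1}$, matching the group relations in $L_2$.
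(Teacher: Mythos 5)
Your proposal is correct and takes essentially the same approach the paper intends: the paper states Observation \ref{genactionobs} without proof, as an immediate consequence of the lamp stand composition rule given at the start of Section \ref{actionsub}, and your computation of the lamp stands of $t$, $t^{-1}$, $at$, and $(at)^{-1}$ (in particular that $(at)^{-1}$ lights lamp $-1$ and ends at position $-1$), followed by the translate-and-toggle bookkeeping, is exactly the routine verification being left to the reader. Your extra remarks on well-definedness for configurations with infinitely many lit lamps and on the inverse-pair consistency check go slightly beyond what the paper records but are fully consistent with it.
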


For $k\in\integers$, let $a_k$ represent the element $t^kat^{-k}\in L_2$. Notice that in the lamps model for $L_2$, this is the element associated with only lamp $k$ lit and the lamplighter at position 0.

\begin{observation}

The action of $a_k$ on the lamps model of $\partial\dl_2(2)$ is to switch the lamp at position $k$.

\end{observation}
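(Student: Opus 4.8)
The plan is to reduce the claim to the generator action already recorded in Observation \ref{genactionobs}. Since $a_k = t^k a t^{-k}$ and $a = (at)t^{-1}$ in $L_2$, we may rewrite $a_k = t^k (at) t^{-(k+1)}$, so that for any class $[\gamma]\in\partial\dl_2(2)$,
\[
a_k\cdot[\gamma] = t^k\cdot\bigl( (at)\cdot\bigl( t^{-(k+1)}\cdot[\gamma] \bigr)\bigr),
\]
and it then suffices to apply the rules in Observation \ref{genactionobs} one factor at a time, using the recipe of Section \ref{actionsub} for the action on the lamp stand.

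First I would track the net horizontal shift of the lit lamps. The factor $t^{-(k+1)}$ shifts the lit lamps $k+1$ positions to the left, the factor $at$ shifts them one position to the right, and $t^k$ shifts them $k$ positions to the right; these cancel, so the final configuration sits over the same positions as that of $[\gamma]$. Because $exp_t(a_k) = 0$, the lamplighter is not moved off its end at $\pm\infty$; in particular the two invariant pieces $(\partial T_0 - \omega_0)\times\{\omega_1\}$ and $(\partial T_1 - \omega_1)\times\{\omega_0\}$ are preserved, so only the lit-lamp data can change.

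Next I would pin down which lamp gets toggled. The only switching among the three factors is the single toggle performed by $at$ at position $0$ of the \emph{intermediate} lamp stand, i.e.\ after the configuration of $[\gamma]$ has been shifted left by $k+1$ and then right by $1$. Unwinding the shifts, the lamp sitting over position $0$ at that instant is precisely the lamp that originally sat over position $k$, and the subsequent shift right by $k$ returns it there. Since lamps take values in $\mathbb{Z}_2$, toggling once changes the state of lamp $k$ and leaves every other lamp untouched, so $a_k\cdot[\gamma]$ agrees with $[\gamma]$ except that the lamp at position $k$ is switched.

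The hard part will be purely the bookkeeping: one must verify that the three shifts genuinely compose to the identity on positions and that the toggle introduced by $at$ tracks back to position $k$ rather than to $0$ or $k\pm 1$. I would make this rigorous by writing the lamp configuration of $[\gamma]$ as a function $c\colon\integers\to\mathbb{Z}_2$ and checking that the composite of the three rules sends $c(j)$ to $c(j)+\delta_{j,k}$, the additive (XOR) form of ``switch lamp $k$''. This direct computation settles the claim and, as a sanity check, re-derives that $a = a_0$ switches lamp $0$.
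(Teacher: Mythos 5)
Your argument is correct, but it takes a genuinely different (and more roundabout) route than the paper, which offers no written proof because it regards the claim as immediate from two facts just established: the sentence preceding the observation notes that the lamp stand of $a_k = t^k a t^{-k}$ has only lamp $k$ lit with the lamplighter at position $0$, and the composition recipe of Section \ref{actionsub} says that $a_k\cdot[\gamma]$ is computed by starting from $a_k$'s lamp stand and then performing $[\gamma]$'s lighting from $a_k$'s lamplighter position, namely $0$. Overlaying $[\gamma]$'s $\mathbb{Z}_2$-configuration onto ``only lamp $k$ lit'' toggles lamp $k$ and nothing else; that one-step overlay is the paper's whole (implicit) proof. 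You instead factor $a_k = t^k(at)t^{-(k+1)}$ and push $[\gamma]$ through the generator rules of Observation \ref{genactionobs} one factor at a time, checking that the horizontal shifts cancel (since $-(k+1)+1+k=0$) and that the single toggle, performed at position $0$ of the intermediate stand, tracks back to the lamp that began at position $k$; in additive form the composite sends $c(j)$ to $c(j)+\delta_{j,k}$, exactly as you say, and your bookkeeping is right. What your route buys is that it needs only the generator actions plus associativity of the action, never invoking the composition recipe for the non-generator element $a_k$, and it doubles as a consistency check on Observation \ref{genactionobs} (recovering in particular that $a=a_0$ toggles lamp $0$); its cost is the shift-tracking that the paper's direct reading avoids. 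One small remark: the invariance of the pieces $(\partial T_0 - \omega_0)\times\{\omega_1\}$ and $(\partial T_1 - \omega_1)\times\{\omega_0\}$ holds for the action of \emph{every} $g\in L_2$, as the paper notes in Section \ref{actionsub}, so the appeal to $exp_t(a_k)=0$ there is unnecessary (though harmless).
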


In Section \ref{dynamicssub}, we use the lamp stand interpretation of $\partial\dl_2(2)$ to compute the dynamics of this action.

\subsection{$\partial \dl_2(q)$ without a basepoint}
In Section \ref{visualboundariessub} we introduced the {\em based} and
{\em unbased} visual boundaries.  When $X$ is CAT(0) or
$\delta$-hyperbolic, these agree.  The following shows that the same
is true for $\dl_2(q)$.

\begin{proposition}
\label{basepointprop}

Let $\gamma$ be a geodesic ray in $\dl_2(q)$. 
Then there exists a geodesic ray $\tau$ emanating from the origin which 
is asymptotic to $\gamma$. 

\end{proposition}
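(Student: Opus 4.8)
The plan is to take an arbitrary geodesic ray $\gamma$ (emanating from some vertex $v$) and construct an asymptotic geodesic ray $\tau$ based at the origin $o$. By the structure theory developed above (Lemmas \ref{no2turnraysobs}, \ref{sameheightraysobs} and Theorem \ref{asymptoticthm}), the asymptotic class of a ray is completely determined by the pair of ends its two tree-projections $\gamma^{(0)}, \gamma^{(1)}$ approach. So the real content is: given the ends $\xi_0 = \lim \gamma^{(0)} \in \partial T_0$ and $\xi_1 = \lim \gamma^{(1)} \in \partial T_1$ determined by $\gamma$, exhibit an honest geodesic ray from $o$ whose projections limit to the same two ends, and then invoke Theorem \ref{asymptoticthm} to conclude it is asymptotic to $\gamma$.

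First I would record, via Lemma \ref{no2turnraysobs} and Observation \ref{differentheightraysobs}(1), the basic dichotomy forced on $\gamma$: it is a one-turn (or zero-turn) ray, so exactly one of its projections eventually ascends eternally toward a non-distinguished end while the other descends eternally toward the distinguished end. Say $\gamma$ eventually ascends in $T_0$, so $\gamma^{(0)} \to \xi_0 \neq \omega_0$ and $\gamma^{(1)} \to \omega_1$ (the symmetric case is identical). The key observation is that an end $\xi_0 \neq \omega_0$ of $T_0$ corresponds, after the projection $\gamma^{(0)}$ has bottomed out, to a downward ray toward $\omega_0$ followed by an eternal ascent along a fixed geodesic ray in $T_0$ limiting to $\xi_0$; and since $o_0$ lies on the ray from $\gamma^{(0)}(h)$ toward $\omega_0$ only after enough descent, the end $\xi_0$ is ``visible'' from $o_0$: there is a geodesic ray in $T_0$ from $o_0$ to $\xi_0$.

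Then I would build $\tau$ explicitly from these two tree-geodesics. Let $\sigma_0$ be the geodesic ray in $T_0$ from $o_0$ to $\xi_0$, and let $\sigma_1$ be the geodesic ray in $T_1$ from $o_1$ to $\omega_1$ (which simply follows predecessors). Let $-h \le 0$ be the height at which $\sigma_0$ bottoms out, i.e. the greatest common ancestor of $o_0$ and $\xi_0$ sits at height $-h$. I would define $\tau$ to descend in $T_0$ (ascending simultaneously in $T_1$) for $h$ steps, then ascend in $T_0$ along $\sigma_0$ (descending simultaneously in $T_1$ along $\sigma_1$) for the rest of time. One checks this is a legal edge-path in $\dl_2(q)$ (the heights sum to zero at every step by construction), that it has no back-tracking and exactly one turn, hence is a geodesic ray by the turn-count characterization, and that $\tau^{(0)} = \sigma_0 \to \xi_0$ while $\tau^{(1)} \to \omega_1 = \xi_1$. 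By Theorem \ref{asymptoticthm}, $\tau$ is asymptotic to $\gamma$.

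**The main obstacle** I expect is purely bookkeeping: verifying that the two downward/upward phases in $T_0$ can be matched, edge by edge, with legal compensating moves in $T_1$ so that the $T_1$-projection is forced to be exactly the downward ray $\sigma_1$ toward $\omega_1$. Concretely, during the initial $h$-step descent in $T_0$ one must ascend in $T_1$, and during the eternal ascent in $T_0$ one must descend in $T_1$; I must confirm that the heights never force an illegal move and that after the turn the $T_1$-coordinate genuinely follows predecessors (so that $\tau^{(1)} \to \omega_1$, as required for asymptoticity). This is essentially the same synchronization argument already used in the proof of Lemma \ref{sameheightraysobs}, so I would lean on that computation rather than redo it, and the rest follows formally from Theorem \ref{asymptoticthm}.
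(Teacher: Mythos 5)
Your construction of $\tau$ is fine (indeed more explicit than the paper, which simply takes $\tau$ to be ``any geodesic ray emanating from $o$'' approaching the same two ends), but the concluding step has a genuine gap: you cannot invoke Theorem \ref{asymptoticthm} to compare $\tau$ with $\gamma$, because $\gamma$ does not emanate from the origin. Under the paper's standing convention, Theorem \ref{asymptoticthm} and its ingredients (in particular Lemma \ref{sameheightraysobs}) are stated and proved only for rays based at $o$; the proof of Lemma \ref{sameheightraysobs} uses the common basepoint essentially, arguing that both projections at height $h$ in $T_{1-i}$ have $o_{1-i}$ as an ancestor and therefore merge at $o_{1-i}$ by time $2h$. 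That argument simply does not run when the two rays start at different vertices. Worse, the implication you need --- ``same ends implies asymptotic'' for rays with \emph{different} basepoints --- is, modulo your (easy) construction, exactly equivalent to Proposition \ref{basepointprop} itself, so appealing to it is circular. Your ``main obstacle'' paragraph worries about the bookkeeping of the construction, which is the easy part, and proposes to lean on Lemma \ref{sameheightraysobs} for the rest, which is precisely the step that fails across basepoints.

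The missing argument is what the paper's proof actually supplies, and it is short: since $\gamma^{(i)}$ and $\tau^{(i)}$ approach the same end of the tree $T_i$, they merge, i.e.\ there exist $r_1, r_2$ with $\gamma^{(i)}(r_1) = \tau^{(i)}(r_2)$, and similarly in $T_{1-i}$; beyond the maximum of these parameters, in each tree one projection is an ancestor of the other, the height offset between $\gamma(k)$ and $\tau(k)$ is a constant $c$ (both rays gain height $+1$ per step in one tree and $-1$ in the other, for large $k$), so each tree distance $d_{T_i}(\gamma^{(i)}(k), \tau^{(i)}(k))$ is eventually the constant $|c|$. Lemma \ref{upperboundondistobs} then bounds $d(\gamma(k),\tau(k))$ by $2|c|$ for all large $k$, and hence the rays are asymptotic. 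If you splice this direct merging-and-constant-distance argument in place of the appeal to Theorem \ref{asymptoticthm}, your proof is correct and essentially coincides with the paper's.
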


\begin{proof}

In one tree $T_i$,
$i \in \{0,1\}$, $\gamma$ chooses a non-distinquished end $e \neq \omega_i$, and 
in $T_{1-i}$, $\gamma$ approaches $\omega_{1-i}$. Let $\tau$ be any
geodesic ray emanating from $o$ that approaches $e \in T_i$ and
$\omega_{1-i} \in T_{1-i}$.  Because the projections $\gamma^{(i)}$ and
$\tau^{(i)}$ approach the same end of $T_i$, they must merge since $T_i$ is a tree. I.e.,
there exist $r_1,r_2 \in \integers$ such that $\gamma^{(i)}(r_1) =
\tau^{(i)}(r_2)$. Similarly, there are $s_1,s_2 \in \integers$ such that
$\gamma^{(1-i)}(s_1) = \tau^{(1-i)}(s_2)$. Setting $n =
\max\{r_1,r_2,s_1,s_2\}$, one of $\gamma^{(i)}(k)$ and $\tau^{(i)}(k)$ is an 
ancestor of the other for all $k \geq n$, and the
opposite relation holds for $\gamma^{(1-i)}(k)$ and $\tau^{(1-i)}(k)$.
The distance from $\gamma(k)$ to $\tau(k)$ is constant, regardless of
$k$, and so the rays are asymptotic.
\end{proof}

\section{Topology of $\partial \dl_2(q)$}
\label{dl2qtop-section}

\subsection{Some important sets}

The natural topology on the visual boundary of a space is the topology
of uniform convergence on compact sets. Informally, this means that
two asymptotic equivalence classes are close if there are
representatives of those classes that share a long initial segment.
More formally, given a ray $\gamma$, a compact subset $[0,k]$ of
$[0,\infty)$ and $0<\epsilon<1$, define the set
$$B_{[0,k]}(\gamma,\epsilon)=\{ \gamma' \ | \ \sup\{d(\gamma(x),
\gamma'(x))\ |\ x\in [0,k]\} < \epsilon\}.$$ The sets
$B_{[0,k]}(\gamma,\epsilon)$ form a basis for the topology on the set of
geodesic rays. Often in our proofs,
we will work with representatives in the space of rays, rather than
the equivalence classes themselves. We will denote the equivalence
class of a ray $\gamma$ by $[\gamma]$. Abusing notation, we will write
$B_{[0,k]}([\gamma],\epsilon)$ for the image of $B_{[0,k]}(\gamma, \epsilon)$ in
the quotient space.

\begin{observation}
\label{obsquotientbasis}
The sets $B_{[0,k]}([\gamma],\epsilon)$ form a basis for the topology on the visual boundary
(the set of \emph{equivalence classes} of rays).
\end{observation}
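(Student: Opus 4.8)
The plan is to exploit the fact, established in Lemma \ref{sameheightraysobs} and Observation \ref{differentheightraysobs}, that two geodesic rays from $o$ are asymptotic if and only if they eventually coincide. Write $R$ for the space of geodesic rays emanating from $o$ with the topology of uniform convergence on compact sets, and $\pi : R \to \partial\dl_2(q)$ for the quotient map $\gamma \mapsto [\gamma]$. Since the sets $B_{[0,k]}(\gamma,\epsilon)$ are known to form a basis for $R$, and $B_{[0,k]}([\gamma],\epsilon)$ is by definition the image $\pi(B_{[0,k]}(\gamma,\epsilon))$, the statement reduces to a standard fact: if $\pi$ is a continuous surjection carrying each basic open set to an open set, then the images of a basis form a basis for the quotient. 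The ``generating'' half of this is automatic (given open $V\ni[\gamma]$, pull back, find a basic set inside the preimage, and push it forward, using surjectivity), so the entire content is to show that each $\pi(B_{[0,k]}(\gamma,\epsilon))$ is open, equivalently that its saturation $\pi^{-1}\pi(B_{[0,k]}(\gamma,\epsilon))$ is open in $R$.

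First I would record the elementary point that, because distances between vertices are integers and $\epsilon<1$, membership in a basic set $B_{[0,m]}(\gamma,1/2)$ forces a ray to agree with $\gamma$ on the whole initial segment $[0,m]$: the two rays pass through the same vertices at integer times, and consecutive vertices determine the connecting edge. Next comes the core step. Fix $U = B_{[0,k]}(\gamma,\epsilon)$ and a ray $\gamma'$ in its saturation, so $\gamma'$ is asymptotic to some $\gamma''\in U$. By the merging property of Lemma \ref{sameheightraysobs} there is an $N$, which I may take larger than both $k$ and any turning time of $\gamma'$, with $\gamma'|_{[N,\infty)}=\gamma''|_{[N,\infty)}$. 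I claim the basic neighborhood $B_{[0,N]}(\gamma',1/2)$ of $\gamma'$ lies in the saturation. Given $\delta$ in this neighborhood, $\delta$ agrees with $\gamma'$ on $[0,N]$, and I form the concatenation $\gamma'''$ that follows $\gamma''$ on $[0,N]$ and follows $\delta$ on $[N,\infty)$; this is well defined because all three rays pass through the common vertex $\gamma''(N)=\gamma'(N)=\delta(N)$. By construction $\gamma'''$ agrees with $\gamma''$ (hence with $\gamma$ to within $\epsilon$) on $[0,k]$, so $\gamma'''\in U$, while $\gamma'''$ coincides with $\delta$ on $[N,\infty)$, so $\gamma'''$ is asymptotic to $\delta$; thus $\delta$ lies in the saturation, as desired.

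The main obstacle is verifying that $\gamma'''$ is genuinely a geodesic ray and not merely a path. Here I would argue directly with the tree projections: since $N$ exceeds the turning time, $\gamma'''$ descends in some $T_i$ to a height $-h\le 0$ (possibly $h=0$) and thereafter increases height monotonically in $T_i$ while descending along the unique ray to $\omega_{1-i}$ in $T_{1-i}$, so it has at most one turn. In fact $\gamma'''^{(i)}$ equals $\delta^{(i)}$ (they agree on $[0,N]$ and on $[N,\infty)$), and because $\delta$ turns, if at all, in $T_i$, its binding constraint is $T_i$, giving $d_{T_i}(o_i,\delta^{(i)}(t))=t$. Observation \ref{lowerboundondistobs} then yields $d(o,\gamma'''(t))\ge t$, while $\gamma'''$ reaches $\gamma'''(t)$ in $t$ steps, forcing equality and hence geodesy. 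This establishes openness of the saturation of every basic set, so $\pi$ carries basic opens to opens, and the displayed family is a basis for the quotient topology.
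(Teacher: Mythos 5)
The paper states this Observation with no proof at all, so you are supplying an argument rather than paralleling one, and your overall architecture is the right one: the ``generating'' half is indeed automatic, so the entire content is that each saturation $\pi^{-1}\pi\bigl(B_{[0,k]}(\gamma,\epsilon)\bigr)$ is open, and your merge-and-concatenate ray $\gamma'''$ (follow $\gamma''$ to the merge time $N$, then follow $\delta$) is exactly the right device, with Lemma \ref{sameheightraysobs} and Observation \ref{differentheightraysobs} supplying the merging of asymptotic rays.

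There is, however, a genuine gap in your geodesy verification, and it occurs in precisely the case that drives this paper's non-Hausdorff phenomena, so it cannot be waved away. Suppose $\gamma'$ never turns, ascending forever in $T_i$, so $[\gamma']\in C_0^i$, and take $\delta\in B_{[0,N]}(\gamma',1/2)$ which agrees with $\gamma'$ through time $N$ and then bottoms out in $T_{1-i}$ at some time $M\geq N$; such rays exist and are exactly the ones used in Lemma \ref{notopenobs} and Observation \ref{nonhausdorffobs}. For such $\delta$ your claim that ``$\delta$ turns, if at all, in $T_i$'' is false --- it turns in $T_{1-i}$ --- and with it the identity $d_{T_i}(o_i,\delta^{(i)}(t))=t$ fails: after time $M$ the projection $\delta^{(i)}$ descends, and for instance $\delta^{(i)}(2M)=o_i$. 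So Observation \ref{lowerboundondistobs} no longer yields $d(o,\gamma'''(t))\geq t$, and the argument collapses exactly where this boundary is most delicate. The repair is short but must be stated: when $[\gamma']\in C_0^i$ its asymptotic class is a singleton --- Observation \ref{differentheightraysobs} forces $\gamma''$ to be never-turning as well, Lemma \ref{sameheightraysobs} with $h=0$ gives $\gamma''^{(i)}=\gamma'^{(i)}$, and both $T_{1-i}$-projections are the unique descending ray --- hence $\gamma''=\gamma'$, $\gamma'''=\delta$, and geodesy is trivial. (Alternatively, run your binding-constraint argument in $\delta$'s own turning tree $T_{1-i}$, noting $\gamma'''^{(1-i)}=\delta^{(1-i)}$ for the same reason.) Your argument is correct as written when $\gamma'$ does turn, since then $N$ exceeds its turning time and Lemma \ref{no2turnraysobs} forces $\delta$ to turn only in $T_i$; even there, the assertion $d_{T_i}(o_i,\delta^{(i)}(t))=t$ silently uses that a geodesic ray cannot re-ascend the edge it just descended at its turn (otherwise $\delta(h+1)=\delta(h-1)$), which deserves a sentence. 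With these patches the proof is complete.
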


\begin{definition} For $i\in\{0, 1\}$ and $n\in\mathbb{N}$, we define $C_n^i$ to be the set of equivalence classes of geodesic rays that ``bottom out'' in $T_i$ after descending for exactly $n$ edges. We define $C_0^i$ to be the set of equivalence classes of rays that ascend forever in $T_i$ without ever turning.
\end{definition}

Notice that when equipped with the subspace topology, the sets $C_n^i$ are homeomorphic to the Cantor set.

In terms of the lamp stand, elements of $C_n^0$ (for $n>0$) have a lit
lamp at position $-n$, no lit lamps below that position, and the
lamplighter at $+\infty$. Similarly, elements of $C_n^1$ (for $n>0$)
have a lit lamp at position $n-1$, no lit lamps above that position,
and the lamp lighter at $-\infty$. The lamp stand for an element of
$C_0^0$ has the lamplighter at $+\infty$ and no lamps lit below 0. The
lamp stand for an element of $C_0^1$ has the lamplighter at $-\infty$
and no lit lamps above -1.

\begin{definition}For $k\in\mathbb{N}$, we define the set $C_{k,\infty}^i = \cup_{n=k}^\infty C_n^i$, which is the set of equivalence classes of geodesic rays that descend at least $k$ edges in $T_i$ before turning and ascending in $T_i$ forever. 
\end{definition}

When equipped with the subspace topology, the sets $C_{k,\infty}^i$ are homeomorphic to the punctured Cantor set.

We can use these sets to better understand the topology on $\partial\dl_2(q)$.

\begin{observation}
\label{basisobs}

If $[\gamma]\in C_n^i$ for $n>0$ and $k\leq n$, then
$$B_{[0,k]}([\gamma],\epsilon) = C_{k,\infty}^{i} \cup C_0^{1-i}.$$

If $[\gamma]\in C_n^i$ for $n>0$ and $k>n$, then $$B_{[0,k]}([\gamma],\epsilon)
= \{[\tau]\in C_n^i \ | \ \tau^{(i)} \text{ agrees with }\gamma^{(i)}\text{ on
}[0,k]\}.$$

If $[\gamma]\in C_0^i$, then $$B_{[0,k]}([\gamma],\epsilon) = C_{k,\infty}^{1-i}
\cup\{[\tau]\in C_0^i \ | \ \tau^{(i)} \text{ agrees with }\gamma^{(i)}\text{ on
}[0,k]\}.$$

\end{observation}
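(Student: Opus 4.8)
The plan is to compute each basic open set $B_{[0,k]}([\gamma],\epsilon)$ directly by asking which rays $\tau$ can stay within distance $\epsilon < 1$ of $\gamma$ on the integer-length interval $[0,k]$. Since all the relevant objects are geodesic rays through a graph with unit-length edges, the condition $\sup\{d(\gamma(x),\tau(x)) : x \in [0,k]\} < \epsilon < 1$ is equivalent to the condition that $\tau$ agrees with $\gamma$ exactly on the vertices $\gamma(0),\gamma(1),\dots,\gamma(\lfloor k\rfloor)$ (at integer times the distance is an integer, so being $<1$ forces equality; between integer times a shared initial vertex segment keeps the rays together). So the whole computation reduces to: given that $\gamma$ and $\tau$ share the same first $k$ vertices in $\dl_2(q)$, what does that force about $[\tau]$? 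I would set up this reduction as the first step and then handle the three cases in order.

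For the first case, suppose $[\gamma]\in C_n^i$ with $n>0$ and $k\le n$. Agreeing with $\gamma$ on $[0,k]$ means $\tau$ descends in $T_i$ along the same initial segment for the first $k$ steps, all of which lie in the descending portion since $k \le n$. By Theorem \ref{asymptoticthm} and Lemma \ref{sameheightraysobs}, the asymptotic class depends only on the eventual ends approached, not on this pre-turn segment — so matching the first $k$ descending steps places no constraint on the ultimate end in $T_i$ beyond forcing the ray to begin descending in $T_i$. I would argue that any class that descends in $T_i$ for at least $k$ steps before turning (that is, any element of $C_{k,\infty}^i$) has a representative matching $\gamma$ on $[0,k]$, and that every element of $C_0^{1-i}$ also does, because a ray ascending forever in $T_{1-i}$ descends in $T_i$ and can be chosen to follow the same first $k$ edges. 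This gives exactly $C_{k,\infty}^i \cup C_0^{1-i}$; the mild subtlety is confirming both inclusions, especially that nothing outside this union sneaks in (rays turning before step $k$, or starting in the wrong tree, are excluded by the matching requirement together with Observation \ref{differentheightraysobs}).

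For the second case, $[\gamma]\in C_n^i$ with $k>n$: now matching $\gamma$ on $[0,k]$ pins down not only that $\tau$ bottoms out at height $-n$ in $T_i$ but also its behavior past the turn, so $\tau^{(i)}$ must agree with $\gamma^{(i)}$ on all of $[0,k]$; by Lemma \ref{sameheightraysobs} this is precisely the condition that determines the class among rays in $C_n^i$, giving the stated description. For the third case, $[\gamma]\in C_0^i$, the ray ascends forever in $T_i$, so matching on $[0,k]$ fixes $\tau^{(i)}$ on $[0,k]$; any such $\tau$ either continues to ascend in $T_i$ (landing in $C_0^i$ with matching projection) or turns, but a ray sharing these first $k$ ascending-in-$T_i$ steps is descending in $T_{1-i}$ for those $k$ steps and hence lies in $C_{k,\infty}^{1-i}$, yielding the union as written. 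The main obstacle I anticipate is the bookkeeping in the first and third cases: carefully justifying which classes admit a representative matching $\gamma$ on the shared initial segment, using the freedom granted by Lemma \ref{sameheightraysobs} (that the pre-turn segment is irrelevant to the class) without overcounting or omitting boundary cases such as the ends $\omega_i$ themselves, which are excluded from $\partial\dl_2(q)$ by Corollary \ref{setcor}.
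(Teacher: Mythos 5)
Your opening reduction (since $0<\epsilon<1$ and distances between vertices are integers, membership in $B_{[0,k]}([\gamma],\epsilon)$ means having a representative emanating from $o$ whose first $k$ edges coincide with those of $\gamma$) is correct, and your treatment of the second and third cases is sound. The genuine problem is in the first case, at exactly the point you set aside as bookkeeping: the assertion that \emph{every} class in $C_0^{1-i}$ has a representative matching $\gamma$ on $[0,k]$ is false. While $\gamma$ descends in $T_i$, all of the branching happens in the \emph{other} tree: each edge that moves down in $T_i$ (toward the unique predecessor) simultaneously moves up in $T_{1-i}$, choosing one of $q$ successors. For a class that eventually turns in $T_i$, those pre-turn choices in $T_{1-i}$ are erased after the turn (this is Lemma \ref{sameheightraysobs}), which is why $C_{k,\infty}^{i}\subseteq B_{[0,k]}([\gamma],\epsilon)$ does hold. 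But a class in $C_0^{1-i}$ never turns: its representative ascends in $T_{1-i}$ forever, so those very ascending choices constitute the end of $T_{1-i}$ that defines the class. By Theorem \ref{asymptoticthm} such a class has exactly one representative emanating from $o$, and there is no freedom to ``choose'' it to follow $\gamma$'s first $k$ edges.

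Concretely, take $q=2$, $i=0$, $k=1$, and $\gamma\in C_n^0$ whose first edge lights the lamp at position $-1$ (first edge labeled $(at)^{-1}$). The class of the ray $t^{-1}t^{-1}t^{-1}\cdots$ (all lamps dark, lamplighter walking to $-\infty$) lies in $C_0^1$, but its unique representative from $o$ has first edge $t^{-1}$, so it does not lie in $B_{[0,1]}([\gamma],\epsilon)$. In other words, the first displayed equality of the Observation is itself incorrect as printed --- no proof of it can succeed --- and the right-hand side should read $C_{k,\infty}^{i}\cup\{[\tau]\in C_0^{1-i}\mid \tau^{(1-i)}\text{ agrees with }\gamma^{(1-i)}\text{ on }[0,k]\}$, in exact symmetry with the third case, where the analogous restriction \emph{is} imposed. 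Your argument, once you track which tree carries the choices during the descent, proves precisely this corrected formula. The correction does no damage downstream: in Proposition \ref{compactobs} the projection of such a basis element to $\partial T_{1-i}$ becomes the set of ends lying above the vertex $\gamma^{(1-i)}(k)$ rather than above $o_{1-i}$, which is still open in $\partial T_{1-i}$, and the openness, closedness, and separation arguments are unaffected. (For comparison, the paper gives no argument here beyond ``easily verified,'' so yours is the only proof on the table, and it inherits the statement's error at exactly this spot.)
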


\begin{proof}

These statements are easily verified; recall that $0<\epsilon<1$.

\end{proof}

See Figure \ref{nestedfig} for some examples of nested basis elements.

\begin{figure}[htbp]
\begin{center}
\includegraphics[width=3.5in]{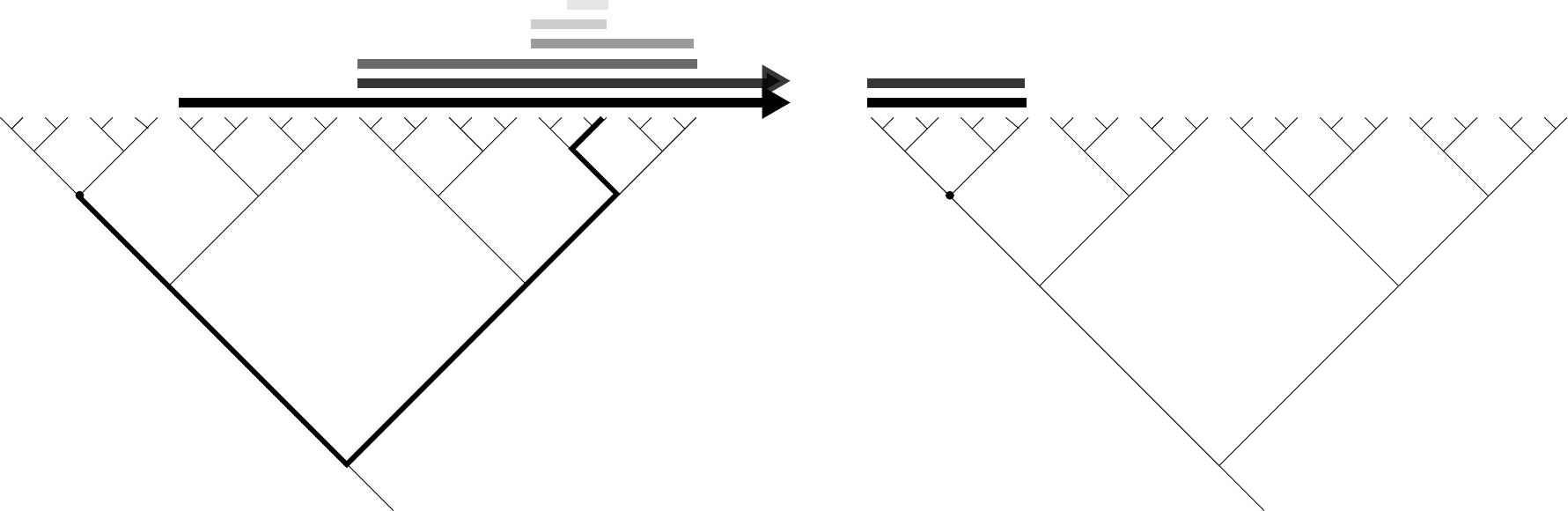}
\caption{Some nested basis elements of $\partial\dl_2(2)$}
\label{nestedfig}
\end{center}
\end{figure}

We now prove some of the important properties of these sets.

\begin{lemma}
\label{openobs}

For $n>0$, the set $C_n^i$ is open in $\partial \dl_2(q)$.

\end{lemma}

\begin{proof}

Fix $n>0$. For each $j\in\{1, 2, ... , q-1\}$, let $[\gamma_j]\in C_n^i$
such that if $j\neq j'$ then $\gamma_j^{(i)}(n+1) \neq
\gamma_{j'}^{(i)}(n+1)$ (note that $\gamma_j^{(i)}|_{[0,n]} =
\gamma_{j'}^{(i)}|_{[0,n]}$).

For $0<\epsilon<1$, notice that $$C_n^i = \bigcup_{j=1}^{q-1} B_{[0,n+1]}([\gamma_j], \epsilon).$$ Thus, $C_n^i$ is open.

\end{proof}

Lemma \ref{openobs} does not apply when $n=0$ because in this
case the open sets $B_{[0,1]}([\gamma_j], \epsilon)$  include 
all elements of $C^{1-i}_{1, \infty}$ (i.e. every class that bottoms out in the opposite tree). 
Hence $C_0^i$ cannot be
formed as a union in the same way. 

\begin{lemma}
\label{notopenobs}

The set $C_0^i$ is not open.

\end{lemma}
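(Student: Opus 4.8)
The plan is to show that $C_0^i$ fails to be open by producing, for an arbitrary class $[\gamma] \in C_0^i$, a sequence of classes lying outside $C_0^i$ that nevertheless enters every basic neighborhood of $[\gamma]$. Equivalently, I want to show that no basic open set $B_{[0,k]}([\gamma],\epsilon)$ is contained in $C_0^i$. The key structural fact I will lean on is the third case of Observation \ref{basisobs}: for $[\gamma] \in C_0^i$, every basic neighborhood has the form
\[
B_{[0,k]}([\gamma],\epsilon) = C_{k,\infty}^{1-i} \cup \{[\tau]\in C_0^i \mid \tau^{(i)} \text{ agrees with } \gamma^{(i)} \text{ on } [0,k]\}.
\]
This already exhibits the obstruction explicitly: every such neighborhood contains the entire punctured Cantor set $C_{k,\infty}^{1-i}$, whose elements bottom out in the \emph{opposite} tree $T_{1-i}$ and therefore lie in $\coprod_n C_n^{1-i}$, disjoint from $C_0^i$.

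First I would fix an arbitrary $[\gamma] \in C_0^i$ and an arbitrary basic neighborhood $B_{[0,k]}([\gamma],\epsilon)$ with $0 < \epsilon < 1$. Next I would invoke Observation \ref{basisobs} to identify this neighborhood with the displayed union, and then observe that $C_{k,\infty}^{1-i}$ is nonempty (for any $k$, one can always choose a ray descending at least $k$ edges in $T_{1-i}$, so $C_k^{1-i} \subseteq C_{k,\infty}^{1-i}$ is already nonempty). Picking any $[\tau] \in C_{k,\infty}^{1-i}$ gives an element of the neighborhood that is not in $C_0^i$, since classes in $C_{k,\infty}^{1-i}$ turn in $T_{1-i}$ whereas classes in $C_0^i$ ascend forever in $T_i$ without turning, and these behaviors are mutually exclusive by Observation \ref{differentheightraysobs} together with the definitions. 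Hence $B_{[0,k]}([\gamma],\epsilon) \not\subseteq C_0^i$.

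Since $[\gamma]$ and the neighborhood were arbitrary, no basic open set around any point of $C_0^i$ is contained in $C_0^i$, so $C_0^i$ contains no interior point and is therefore not open. I would close by noting this matches the heuristic already flagged after Lemma \ref{openobs}: the degeneracy occurs precisely because a ray that ascends forever in $T_i$ can be approximated on any finite initial segment $[0,k]$ by rays that agree with it up to time $k$ but then bottom out in the opposite tree, and these approximating classes escape $C_0^i$.

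I do not expect a serious obstacle here, as the statement is essentially immediate from the explicit description of basic neighborhoods in Observation \ref{basisobs}. The only point requiring mild care is the cleanest justification that $C_{k,\infty}^{1-i}$ and $C_0^i$ are genuinely disjoint; I would handle this by appealing to the fact that membership in $C_0^i$ versus $C_n^{1-i}$ is determined by which tree a representative bottoms out in (or whether it turns at all), which is an asymptotic invariant by Theorem \ref{asymptoticthm} and Observation \ref{differentheightraysobs}, and is therefore well-defined on equivalence classes.
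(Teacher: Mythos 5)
Your proof is correct, and it rests on the same underlying fact as the paper's: any ray agreeing with a ray of $C_0^i$ on a finite initial segment $[0,k]$ can afterwards bottom out in $T_{1-i}$, so every basic neighborhood of a point of $C_0^i$ meets the classes that turn in the opposite tree. The difference is packaging. The paper proves this from scratch: it fixes $[\gamma]\in C_0^i$, explicitly constructs rays $\gamma_n$ agreeing with $\gamma$ on $[0,n]$ and then bottoming out in $T_{1-i}$ (so $[\gamma_n]\in C_n^{1-i}$), and concludes that $[\gamma]$ is a limit point of $\{[\gamma_n]\}$, i.e.\ the complement of $C_0^i$ is not closed. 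You instead quote the third case of Observation \ref{basisobs} and note that the $C_{k,\infty}^{1-i}$ piece of each basic neighborhood is nonempty and disjoint from $C_0^i$, so $C_0^i$ has no interior point. Given that Observation \ref{basisobs} is already available at this stage, your route is shorter, and it is the same mechanism the paper itself uses for Observation \ref{gluingobs}. Your care about disjointness is well placed and correctly resolved: membership in $C_0^i$ versus $C_n^{1-i}$ is determined by which end of each tree the projections approach, which is an asymptotic invariant by Theorem \ref{asymptoticthm}. The one thing the paper's explicit construction buys that your pointwise neighborhood argument does not is the sequence $\{[\gamma_n]\}$ itself, which the paper reuses verbatim in the proof of Observation \ref{nonhausdorffobs} to exhibit a sequence with two distinct limits.
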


\begin{proof}

Fix $[\gamma] \in C_0^i$. For each $n>0$, let $\gamma_n$ be a ray that agrees with $\gamma$ on the first $n$ edges, but then bottoms out in $T_{1-i}$ and ascends in $T_{1-i}$ forever. In other words, $\gamma(x) = \gamma_n(x)$ for all $x\in[0,n]$ and $[\gamma_n] \in C_n^{1-i}$. Notice that $[\gamma_n] \notin C_0^i$.

Consider a basis element $B_{[0,k]}(\gamma, \epsilon)$ of the pre-quotient topology. If $n>k$, then $\gamma_n \in B_{[0,k]}(\gamma, \epsilon)$ and thus $[\gamma_n] \in B_{[0,k]}([\gamma], \epsilon)$. Thus, $[\gamma]$ is a limit point of $\{[\gamma_n]\}$, and so the complement of $C_0^i$ is not closed. Hence, $C_0^i$ is not open.

\end{proof}

\begin{observation}

For any $k\in \mathbb{N}$, the set $C_0^i \cup C_{k,\infty}^{1-i}$ is open.

\end{observation}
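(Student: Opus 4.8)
The plan is to prove openness directly from the explicit descriptions of the basic open sets supplied by Observation \ref{basisobs}. By Observation \ref{obsquotientbasis} the sets $B_{[0,m]}([\gamma],\epsilon)$ form a basis, so it suffices to produce, for each $[\gamma]\in C_0^i\cup C_{k,\infty}^{1-i}$, a truncation length $m$ and some $0<\epsilon<1$ with $B_{[0,m]}([\gamma],\epsilon)\subseteq C_0^i\cup C_{k,\infty}^{1-i}$. Since each description in Observation \ref{basisobs} is independent of $\epsilon$ (any $\epsilon\in(0,1)$ yields the same set), the only real choice I need to make is the value of $m$. Because $C_0^i$ lies in the $T_i$ component of the boundary and $C_{k,\infty}^{1-i}$ in the $T_{1-i}$ component, these two pieces are disjoint and I can split cleanly into two cases.

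First I would treat the case $[\gamma]\in C_{k,\infty}^{1-i}$, so $[\gamma]\in C_n^{1-i}$ for some $n\geq k\geq 1$; in particular $n>0$. Choosing $m=n+1>n$ and applying the second clause of Observation \ref{basisobs} (with the roles of $i$ and $1-i$ interchanged) gives $B_{[0,m]}([\gamma],\epsilon)=\{[\tau]\in C_n^{1-i}\mid \tau^{(1-i)}\text{ agrees with }\gamma^{(1-i)}\text{ on }[0,m]\}\subseteq C_n^{1-i}\subseteq C_{k,\infty}^{1-i}$, which lies inside the target set.

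Next I would treat the case $[\gamma]\in C_0^i$, invoking the third clause of Observation \ref{basisobs} but deliberately choosing the truncation length $m\geq k$. That clause gives $B_{[0,m]}([\gamma],\epsilon)=C_{m,\infty}^{1-i}\cup\{[\tau]\in C_0^i\mid \tau^{(i)}\text{ agrees with }\gamma^{(i)}\text{ on }[0,m]\}$. The second set is contained in $C_0^i$, and since $m\geq k$ we have $C_{m,\infty}^{1-i}\subseteq C_{k,\infty}^{1-i}$; hence the whole basic neighborhood sits inside $C_0^i\cup C_{k,\infty}^{1-i}$. Combining the two cases shows every point of the set is interior, so the set is open.

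The one place that will require care — and the reason the hypothesis should be read as $k\geq 1$ — is exactly this second case. The basic neighborhoods of a point of $C_0^i$ unavoidably \emph{leak} into the opposite tree through the term $C_{m,\infty}^{1-i}$; this is the same leakage that makes $C_0^i$ itself fail to be open in Lemma \ref{notopenobs}. What rescues openness here is that the target set already contains an entire tail $C_{k,\infty}^{1-i}$ in the opposite tree, so truncating no earlier than $k$ absorbs the leaked part. It is worth flagging that if $k=0$ were permitted, then $C_{k,\infty}^{1-i}$ would contain $C_0^{1-i}$, whose points exhibit the analogous leakage in the $T_i$ direction and so admit no neighborhood inside the set; thus $k\geq 1$ is essential, and I would state the observation with that understanding.
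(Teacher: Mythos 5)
Your proof is correct and takes essentially the same approach as the paper, whose entire proof of this observation is that the claim ``follows directly from Observation \ref{basisobs}''; your two-case verification is exactly the argument being alluded to. Two small remarks: the first clause of Observation \ref{basisobs} gives an even shorter route, since for $k\geq 1$ the set $C_0^i\cup C_{k,\infty}^{1-i}$ \emph{equals} the basis element $B_{[0,k]}([\gamma],\epsilon)$ for any $[\gamma]\in C_{k,\infty}^{1-i}$; and your caveat that $k\geq 1$ is needed is legitimate, since for $k=0$ the set contains $C_0^{1-i}$, whose points (as in Lemma \ref{notopenobs}) have no basic neighborhood contained in it.
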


\begin{proof}


This follows directly from Observation \ref{basisobs}.

\end{proof}

\begin{observation}

For $n\geq0$, the set $C_n^i$ is closed.

\end{observation}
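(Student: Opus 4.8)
The plan is to show that for each $n \geq 0$, the complement of $C_n^i$ in $\partial\dl_2(q)$ is open, which suffices since a set is closed iff its complement is open. I would split into the two cases $n > 0$ and $n = 0$, since the structure of basis elements differs (as reflected in Observation \ref{basisobs}).

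For the case $n > 0$, I would take an arbitrary class $[\gamma]$ in the complement of $C_n^i$ and produce a basis element containing $[\gamma]$ that misses $C_n^i$ entirely. The complement of $C_n^i$ consists of classes that either bottom out in $T_{1-i}$, or ascend forever in $T_i$ (i.e. lie in $C_0^i$), or bottom out in $T_i$ at some height $m \neq n$. Using Observation \ref{basisobs}, I would choose $k$ large enough (in particular $k > n$) so that the corresponding basis element $B_{[0,k]}([\gamma],\epsilon)$ is ``small'': if $[\gamma] \in C_m^i$ with $m > 0$ and $m \neq n$, then for $k > m$ the basis element stays inside $C_m^i$ and hence avoids $C_n^i$; if $[\gamma] \in C_0^i$, then for $k \geq 1$ the basis element lies in $C_0^i \cup C_{k,\infty}^{1-i}$, which avoids $C_n^i$ once $k > 0$; and if $[\gamma]$ bottoms out in $T_{1-i}$ at some height $m$, then for $k > m$ the basis element lies in $C_m^{1-i}$, disjoint from $C_n^i$. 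In each subcase a sufficiently large $k$ produces a neighborhood of $[\gamma]$ disjoint from $C_n^i$.

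For the case $n = 0$, the complement of $C_0^i$ is precisely the set of classes that bottom out in either tree, i.e. $\bigcup_{m \geq 1}(C_m^i \cup C_m^{1-i})$. Since each $C_m^j$ with $m > 0$ is open by Lemma \ref{openobs}, this complement is a union of open sets and hence open, so $C_0^i$ is closed. This case is therefore the quickest, relying directly on Lemma \ref{openobs}.

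I expect the main obstacle to be the bookkeeping in the $n > 0$ case: one must carefully verify, using the precise descriptions in Observation \ref{basisobs}, that the chosen threshold $k$ genuinely shrinks the basis element below the ``merging'' size so that it cannot overlap $C_n^i$. The key observation making this work is that for a class bottoming out at height $m$, any basis element indexed by $k > m$ is confined to a single stratum $C_m^j$, and strata for distinct heights or distinct trees are disjoint; thus one never accidentally captures a class from $C_n^i$. Once this confinement is established the result follows immediately.
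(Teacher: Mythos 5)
Your argument for $n>0$ is correct, and it is essentially the paper's argument read pointwise through Observation \ref{basisobs}: every class in the complement of $C_n^i$ has a basis neighborhood confined to a stratum disjoint from $C_n^i$. The gap is in your $n=0$ case. You assert that the complement of $C_0^i$ is ``the set of classes that bottom out in either tree,'' i.e.\ $\bigcup_{m\geq 1}\bigl(C_m^i\cup C_m^{1-i}\bigr)$. That is not the complement: it omits $C_0^{1-i}$, the classes with no turns that ascend forever in $T_{1-i}$ (equivalently, descend forever in $T_i$). Since $C_0^0$ and $C_0^1$ are disjoint --- a ray ascending forever in one tree descends forever in the other --- the set $C_0^{1-i}$ lies in the complement of $C_0^i$ but is missed by your union. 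This is not a harmless bookkeeping slip, because $C_0^{1-i}$ is precisely the kind of set your method cannot absorb: by Lemma \ref{notopenobs} it is \emph{not} open, so the complement of $C_0^i$ is not simply a union of the open strata $C_m^j$, $m>0$, and your ``union of open sets'' conclusion does not go through as written.

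The repair is the unnumbered observation in the paper immediately following Lemma \ref{notopenobs}: for any $k$, the set $C_0^{1-i}\cup C_{k,\infty}^{i}$ is open (every basis neighborhood of a point of $C_0^{1-i}$ has this shape, by Observation \ref{basisobs}). With it one writes
\[ \partial\dl_2(q)\setminus C_0^i \;=\; \bigl(C_0^{1-i}\cup C_{1,\infty}^{i}\bigr)\;\cup\; C_{1,\infty}^{1-i}, \]
where the first set is open by that observation and the second is open as a union of the sets $C_m^{1-i}$, $m\geq 1$, each open by Lemma \ref{openobs}; hence the complement is open and $C_0^i$ is closed. Equivalently, you could argue pointwise exactly as in your $n>0$ case: for $[\gamma]\in C_0^{1-i}$ and $k\geq 1$, Observation \ref{basisobs} gives $B_{[0,k]}([\gamma],\epsilon)\subseteq C_0^{1-i}\cup C_{k,\infty}^{i}$, which is disjoint from $C_0^i$. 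Either way, the missing piece of the complement must be handled through these ``glued'' open sets, not through Lemma \ref{openobs} alone.
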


\begin{proof}

The complement is open by the previous observations.

\end{proof}

\subsection{Separability}

The boundary $\partial \dl_2(q)$ has some interesting separability properties that distinguish it from visual boundaries of hyperbolic or CAT(0) spaces.

\begin{definition} \cite[\S2.6]{munkres}

A topological space $X$ is $T_1$ if for every pair of points $x,y\in X$, there exist open sets $O_x, O_y$ such that $x\in O_x, y\notin O_x$ and $y\in O_y, x\notin O_y$.

\end{definition}

This is a weaker form of separability than the Hausdorff condition (also known as $T_2$), which requires that the open sets $O_x, O_y$ be disjoint.

\begin{observation}
\label{nonhausdorffobs}
The visual boundary $\partial \dl_2(q)$ is not Hausorff.

\end{observation}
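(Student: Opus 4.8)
The plan is to exhibit two distinct points of $\partial\dl_2(q)$ that cannot be separated by disjoint open sets, and the natural candidates come straight out of the basis computation in Observation~\ref{basisobs}. The key observation is that every basic open set containing a point of $C_0^i$ automatically sweeps up an entire tail $C_{k,\infty}^{1-i}$ of classes that bottom out in the \emph{opposite} tree. This is exactly the phenomenon flagged in the remark following Lemma~\ref{openobs}: the set $C_0^i$ fails to be open precisely because small neighborhoods of its points leak into $C^{1-i}_{1,\infty}$. I would leverage this asymmetry to force an unavoidable overlap.

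Concretely, I would fix a class $[\gamma]\in C_0^0$ and a class $[\gamma']\in C_0^1$, i.e.\ one ray ascending forever in $T_0$ (lamplighter at $+\infty$) and one ascending forever in $T_1$ (lamplighter at $-\infty$). These are distinct points since they lie in different pieces of the disjoint union in Corollary~\ref{setcor}. Now let $U$ be any open set containing $[\gamma]$ and $V$ any open set containing $[\gamma']$. By Observation~\ref{obsquotientbasis} there is a basic neighborhood $B_{[0,k]}([\gamma],\epsilon)\subseteq U$, and by the third case of Observation~\ref{basisobs} this basic set equals $C_{k,\infty}^{1}\cup\{[\tau]\in C_0^0\mid \tau^{(0)}\text{ agrees with }\gamma^{(0)}\text{ on }[0,k]\}$; in particular $U\supseteq C_{k,\infty}^{1}$ for some $k$. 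Symmetrically, $V\supseteq C_{k',\infty}^{0}$ for some $k'$. The crucial point is that $U$ contains a full tail of classes bottoming out in $T_1$, while $V$ contains a full tail of classes bottoming out in $T_0$ --- but I actually want both neighborhoods to contain a \emph{common} class. I would therefore refine the choice: since $U\supseteq C_{k,\infty}^{1}$ means $U$ contains \emph{every} class bottoming out deep in $T_1$, and $V$, containing $[\gamma']\in C_0^1$, likewise contains $C_{k'',\infty}^{1}$ for some $k''$ by the third case of Observation~\ref{basisobs} applied at $[\gamma']$. Then any class in $C_{\max\{k,k''\},\infty}^{1}$ lies in both $U$ and $V$, so $U\cap V\neq\emptyset$.

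The main obstacle is getting the bookkeeping of Observation~\ref{basisobs} pointed in the right direction: one must check that a neighborhood of a point of $C_0^0$ and a neighborhood of a point of $C_0^1$ are forced to share the \emph{same} family of tail classes, rather than tails living in opposite trees that never meet. The resolution is that both $C_0^0$ and $C_0^1$ have neighborhoods swallowing a deep tail $C_{k,\infty}^{1}$ (resp.\ $C_{k,\infty}^{0}$), and I would simply pick the pair so that the forced tails coincide --- e.g.\ take $[\gamma]\in C_0^0$ and $[\gamma']\in C_0^0$ (or arrange both leaked tails to be tails of $C^1$); the cleanest route is to note that \emph{any} neighborhood of \emph{any} point of $C_0^i$ contains cofinitely many $C_n^{1-i}$, so two such neighborhoods always intersect in a common $C_n^{1-i}$ for $n$ large. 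Once the correct pair is selected, the failure of separation is immediate from Observation~\ref{basisobs}, with no further computation required.
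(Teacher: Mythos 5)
Your final argument is correct, but it is not the paper's written proof: it is precisely the alternative route that the paper flags immediately afterward, namely non-Hausdorffness via Observation \ref{gluingobs}. The paper's own proof takes two distinct classes $[\gamma],[\gamma']\in C_0^i$ and constructs a single sequence $\{[\gamma_n]\}$ with $[\gamma_n]\in C_n^{1-i}$, where $\gamma_n$ agrees with $\gamma$ on the first $n$ edges and, crucially, the class $[\gamma_n]$ also contains a representative $\gamma'_n$ agreeing with $\gamma'$ on the first $n$ edges; the sequence then converges to both $[\gamma]$ and $[\gamma']$, contradicting uniqueness of limits in Hausdorff spaces. That argument requires verifying that $\gamma_n$ and $\gamma'_n$ are genuinely asymptotic, which leans on the classification of asymptotic classes (Theorem \ref{asymptoticthm}). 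Your route skips the explicit rays entirely: every open set containing a point of $C_0^i$ swallows a deep tail $C_{k,\infty}^{1-i}$, so any two neighborhoods of two distinct points of $C_0^i$ share some nonempty $C_n^{1-i}$ for $n$ large and can never be disjoint. What the paper's version buys is a concrete witness to the failure (a sequence with two limits, which also illuminates the ``gluing'' picture of the space); what yours buys is brevity, needing only the basis computation of Observation \ref{basisobs}.

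One caution about the middle of your write-up: you misapply Observation \ref{basisobs} there. For $[\gamma']\in C_0^1$, a basic neighborhood contains a tail $C_{k'',\infty}^{0}$, not $C_{k'',\infty}^{1}$. In fact your original pair $[\gamma]\in C_0^0$, $[\gamma']\in C_0^1$ \emph{can} be separated: for $k,k'\geq 1$ the basic sets $B_{[0,k]}([\gamma],\epsilon)$ and $B_{[0,k']}([\gamma'],\epsilon)$ are disjoint, since one lies in $C_{k,\infty}^{1}\cup C_0^0$ and the other in $C_{k',\infty}^{0}\cup C_0^1$, and these four pieces meet pairwise trivially. So the correction in your last paragraph --- taking both points in the same $C_0^i$ --- is not merely the cleanest choice; it is necessary for the argument to work at all. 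With that fix, the proof is complete.
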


\begin{proof} 

Let $\gamma$ and $\gamma'$ be distinct geodesic rays that ascend forever in $T_i$ with no turns (i.e. $[\gamma],[\gamma']\in C_0^i$). Notice that $[\gamma]\neq[\gamma']$. For each $n>0$, let $[\gamma_n]\in C_n^{1-i}$ be as in the proof of Lemma \ref{notopenobs}; that is, $\gamma_n$ agrees with $\gamma$ on the first $n$ edges before bottoming out in $T_{1-i}$. Notice that in the asymptotic equivalence class of $\gamma_n$, there is an element $\gamma'_n$ that agrees with $\gamma'$ on the first $n$ edges before bottoming out in tree $T_{1-i}$.

Thus, $[\gamma]$ and $[\gamma']$ are distinct limit points of the sequence $\{[\gamma_n]\}=\{[\gamma'_n]\}$, and so the topology is not Hausdorff.

\end{proof}

We could also prove that the topology is not Hausdorff using the following observation:

\begin{observation}
\label{gluingobs}

Any open set containing an element of $C_0^i$ necessarily contains $C_{k,\infty}^{1-i}$ for some $k$.

\end{observation}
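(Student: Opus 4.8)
The plan is to reduce the statement immediately to the explicit description of basic open sets recorded in Observation~\ref{basisobs}. Suppose $U$ is an open set containing a class $[\gamma]\in C_0^i$. By Observation~\ref{obsquotientbasis}, the sets $B_{[0,k]}([\gamma],\epsilon)$ form a basis for the topology on the visual boundary, so there is some such basic open set with $[\gamma]\in B_{[0,k]}([\gamma],\epsilon)\subseteq U$. Here $\epsilon$ may be taken to be any value in $(0,1)$, since the description of these sets in Observation~\ref{basisobs} does not depend on the particular choice.

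Next I would invoke the third case of Observation~\ref{basisobs}, which applies precisely because $[\gamma]\in C_0^i$. It asserts
\[
B_{[0,k]}([\gamma],\epsilon) = C_{k,\infty}^{1-i}\cup\{[\tau]\in C_0^i \mid \tau^{(i)}\text{ agrees with }\gamma^{(i)}\text{ on }[0,k]\}.
\]
In particular this basic open set contains $C_{k,\infty}^{1-i}$ as a subset. Combining this with the inclusion $B_{[0,k]}([\gamma],\epsilon)\subseteq U$ yields $C_{k,\infty}^{1-i}\subseteq U$, which is exactly the desired conclusion for this value of $k$.

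There is no real obstacle here: all of the topological content has already been extracted in Observation~\ref{basisobs}, where the basic neighborhoods of a point of $C_0^i$ were computed to ``absorb'' an entire tail $C_{k,\infty}^{1-i}$ of classes bottoming out in the opposite tree. The only point worth checking is that the basis from Observation~\ref{obsquotientbasis} is genuinely cofinal at $[\gamma]$, i.e.\ that an arbitrary open neighborhood really does contain one of the sets $B_{[0,k]}([\gamma],\epsilon)$; this is immediate from the definition of a basis. Intuitively, the phenomenon is the same one underlying the failure of $C_0^i$ to be open (Lemma~\ref{notopenobs}) and the failure of Hausdorffness (Observation~\ref{nonhausdorffobs}): every neighborhood of a non-turning class in $T_i$ is forced to sweep up all sufficiently deep turning classes in $T_{1-i}$.
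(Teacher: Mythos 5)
Your proof is correct and follows essentially the same route as the paper, whose entire proof is the single line ``This follows directly from Observation \ref{basisobs}''; you have simply made explicit the step the paper leaves implicit, namely passing from an arbitrary open neighborhood to a basic set $B_{[0,k]}([\gamma],\epsilon)$ and reading off the inclusion $C_{k,\infty}^{1-i}\subseteq B_{[0,k]}([\gamma],\epsilon)$ from the third case of Observation \ref{basisobs}.
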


\begin{proof}

This follows directly from Observation \ref{basisobs}.

\end{proof}

\begin{observation}
\label{t1obs}
The visual boundary $\partial \dl_2(q)$ is $T_1$.

\end{observation}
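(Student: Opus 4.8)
The plan is to establish the $T_1$ property directly from the definition: for any two distinct points $[\gamma], [\gamma'] \in \partial\dl_2(q)$, I must produce an open set containing $[\gamma]$ but not $[\gamma']$ (and symmetrically). Since the roles of the two points are symmetric, it suffices to exhibit, for each distinct pair, an open set containing the first point but excluding the second; swapping the labels then handles the other direction.

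First I would stratify by which of the sets $C_n^i$ each point lives in. By Corollary \ref{setcor}, every class lies in exactly one $C_n^i$ for some $i \in \{0,1\}$ and $n \geq 0$, so I would split into cases according to how $[\gamma]$ and $[\gamma']$ are situated. The cleanest case is when the two points bottom out in different trees or at different heights, i.e. they lie in distinct sets $C_n^i$ and $C_m^j$ with $(i,n) \neq (j,m)$: here I would use that each $C_n^i$ with $n > 0$ is open (Lemma \ref{openobs}) and closed (the preceding observation). When both points lie in the \emph{same} $C_n^i$ with $n > 0$, their projections $\gamma^{(i)}, \gamma'^{(i)}$ must differ at some parameter $k > n$ (by Theorem \ref{asymptoticthm}, since distinct classes in the same $C_n^i$ have distinct projections to $T_i$); then the basis element $B_{[0,k]}([\gamma],\epsilon)$ described in the second clause of Observation \ref{basisobs} consists only of classes in $C_n^i$ whose $T_i$-projection agrees with $\gamma^{(i)}$ on $[0,k]$, and so excludes $[\gamma']$.

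The main obstacle is the case involving $C_0^i$, since $C_0^i$ is not open (Lemma \ref{notopenobs}) and, by Observation \ref{gluingobs}, every open set meeting $C_0^i$ swallows a tail $C_{k,\infty}^{1-i}$. I would handle the subcases as follows. If $[\gamma] \in C_0^i$ and $[\gamma'] \in C_m^j$ with $m > 0$, then the open set $C_m^j$ separates $[\gamma']$ from $[\gamma]$ in one direction; for the other direction I would invoke the open set $C_0^i \cup C_{k,\infty}^{1-i}$ (open by the earlier observation) with $k$ chosen large enough that $[\gamma']$ is excluded — this works whenever $[\gamma'] \notin C_0^i$, since if $[\gamma'] \in C_m^{1-i}$ I simply take $k > m$, and if $[\gamma'] \in C_m^i$ the set $C_0^i \cup C_{k,\infty}^{1-i}$ misses it outright. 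Finally, when both $[\gamma], [\gamma'] \in C_0^i$ (the situation already flagged as non-Hausdorff), the two classes still have distinct $T_i$-projections, so I would use the third clause of Observation \ref{basisobs}: the basis set $B_{[0,k]}([\gamma],\epsilon)$ for $k$ large enough to separate $\gamma^{(i)}$ from $\gamma'^{(i)}$ contains $[\gamma]$ but not $[\gamma']$, and symmetrically. In every case the separating sets are genuinely open, so once all cases are verified the $T_1$ condition follows.
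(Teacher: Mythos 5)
Your stratification strategy is genuinely different from the paper's proof, and the cases you do treat are handled correctly, but as written the case analysis has a hole: the pair $[\gamma]\in C_0^0$, $[\gamma']\in C_0^1$ falls through every case. Your ``cleanest case'' separates distinct strata by citing openness of $C_n^i$, which holds only for $n>0$ (Lemma \ref{notopenobs} shows $C_0^i$ is not open); your ``main obstacle'' case explicitly assumes $[\gamma']\in C_m^j$ with $m>0$; and your final case assumes both points lie in the \emph{same} $C_0^i$. So two classes that never turn, one ascending forever in $T_0$ and the other forever in $T_1$, are never separated. The gap is real but shallow, and the tools you already invoke close it: by Observation \ref{basisobs}, $B_{[0,1]}([\gamma],\epsilon)=C_{1,\infty}^{1}\cup\{[\tau]\in C_0^0 \mid \tau^{(0)}\text{ agrees with }\gamma^{(0)}\text{ on }[0,1]\}$, which is disjoint from $C_0^1$, and symmetrically for $[\gamma']$. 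Equivalently, the open sets $C_0^0\cup C_{1,\infty}^{1}$ and $C_0^1\cup C_{1,\infty}^{0}$ do the job; note that these are even disjoint, so such a pair is in fact Hausdorff-separated --- the failure of Hausdorffness only arises between points of $C_0^i$ and tails $C_{k,\infty}^{1-i}$.

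For comparison, the paper's proof avoids the stratification entirely. Since $[\gamma]\neq[\gamma']$ means the rays are not asymptotic, the classification of asymptotic classes (Theorem \ref{asymptoticthm}, plus the tree geometry of the projections) gives a $k$ with $\gamma(n)\neq\gamma'(n)$ for all $n\geq k$; then the single basis element $B_{[0,k]}([\gamma'],\epsilon)$ already excludes $[\gamma]$, because $\epsilon<1$ while distinct vertices are at distance at least $1$, and symmetry finishes the argument in two lines. Your route is longer but displays the topology more explicitly and reuses the clopen structure of the $C_n^i$; once you patch the missing $C_0^0$ versus $C_0^1$ case, it is a valid alternative proof.
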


\begin{proof}

Let $\gamma$ and $\gamma'$ be geodesic rays that are not asymptotic to each other. So there exists some $k\in\mathbb{N}$ such that $\gamma(n) \neq \gamma'(n)$ for all $n\geq k$. 
Consider the basis elements $B_{[0,k]}([\gamma], \epsilon)$ and $B_{[0,k]}([\gamma'], \epsilon)$. For any ray $\tilde{\gamma}\in[\gamma]$, notice that $\tilde{\gamma}(k)\neq \gamma'(k)$, so $d(\gamma(k),\gamma'(k))\geq1>\epsilon$, and $[\gamma]\notin B_{[0,k]}([\gamma'], \epsilon)$. By symmetry, the reverse holds as well.

\end{proof}

\subsection{Compactness}

For $X$ non-positively curved, $\partial X$ is homeomorphic to the horofunction boundary of $X$ and is also an inverse limit of compact sets \cite[\S II.8]{bh}, both of which imply compactness. Since $\dl_2(q)$ is not CAT(0) or unique geodesic, we have to prove compactness directly.

\begin{proposition}
\label{compactobs}
$\partial \dl_2(q)$ is compact.

\end{proposition}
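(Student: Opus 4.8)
The plan is to verify compactness directly from an open cover, exploiting the decomposition $\partial\dl_2(q)=\bigcup_{i\in\{0,1\}}\bigcup_{n\geq 0} C_n^i$ (every class bottoms out in exactly one tree at a well-defined height, or ascends forever, as classified in the previous section) together with two facts already in hand: each $C_n^i$ is homeomorphic to the Cantor set and hence compact, and, by Observation \ref{gluingobs}, every open set meeting $C_0^i$ must contain a cofinite tail $C_{k,\infty}^{1-i}$ of the opposite family. The whole difficulty is that this is a union of \emph{countably infinitely} many Cantor sets, so individual compactness is not enough; the point is that the ``gluing'' recorded in Observation \ref{gluingobs} lets a single open set absorb all but finitely many of the pieces.

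So let $\mathcal{U}$ be an open cover of $\partial\dl_2(q)$. First I would cover the compact set $C_0^0$: since it is a Cantor set, finitely many members $V_1,\dots,V_m\in\mathcal{U}$ suffice, and I may assume each $V_j$ meets $C_0^0$. By Observation \ref{gluingobs}, each such $V_j$ contains $C_{k_j,\infty}^{1}$ for some $k_j$; setting $K_1=\min_j k_j$, the union $V_1\cup\cdots\cup V_m$ contains $C_0^0\cup C_{K_1,\infty}^{1}$, i.e. all of $C_0^0$ and every $C_n^1$ with $n\geq K_1$. Symmetrically, finitely many members $W_1,\dots,W_p\in\mathcal{U}$ cover the Cantor set $C_0^1$, and their union contains $C_0^1\cup C_{K_2,\infty}^{0}$ for some $K_2$.

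After this, the only classes not yet covered lie in the finitely many sets $C_n^1$ with $1\leq n<K_1$ and $C_n^0$ with $1\leq n<K_2$. Each of these is itself a Cantor set, hence compact, so each is covered by finitely many members of $\mathcal{U}$; since there are only finitely many such indices $n$, this contributes only finitely many more open sets. Collecting the $V_j$, the $W_j$, and these finite subcovers yields a finite subcover of all of $\partial\dl_2(q)$, establishing compactness.

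I expect the only real subtlety to be the bookkeeping in the first step: one must pass through the \emph{compactness} of $C_0^i$ (rather than through a single point) to extract the uniform bound $K_1=\min_j k_j$, so that among the chosen open sets one genuinely swallows the entire tail $C_{K_1,\infty}^{1}$. Once this is set up, the non-Hausdorff gluing of the $C_0^i$ to the tails of the opposite family---the very pathology that makes the space fail to be Hausdorff in Observation \ref{nonhausdorffobs}---is precisely what forces compactness, since it prevents the infinitely many Cantor pieces from splitting off into an infinite discrete family.
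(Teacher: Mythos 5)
Your proof is correct, and it takes a genuinely different route from the paper's. The paper transfers the problem to the trees: identifying $\partial T_0 \sqcup \partial T_1$ with $\partial\dl_2(q) \sqcup \{\omega_0,\omega_1\}$ as sets, it reduces to a cover by basis elements, enlarges each cover element $A_i$ by the point $\omega_j$ whenever $C_{k,\infty}^j \subseteq A_i$, checks that the restrictions of these enlarged sets to $\partial T_0$ and $\partial T_1$ are open covers of those compact Cantor sets, and pulls a finite subcover back to $\partial\dl_2(q)$. You instead stay inside $\partial\dl_2(q)$ and argue by absorption on the pieces $C_n^i$: finitely many cover elements suffice for the compact subspace $C_0^0$, and Observation \ref{gluingobs} forces those same sets to contain a full tail $C_{K_1,\infty}^1$ (your choice $K_1=\min_j k_j$ is the right one, since $C_{k,\infty}^{1-i}$ grows as $k$ shrinks); symmetrically for $C_0^1$; the leftover classes then lie in finitely many compact Cantor pieces $C_n^i$, each covered by finitely many sets. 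Both arguments turn on the same phenomenon --- open sets meeting $C_0^i$ contain cofinite tails of the opposite family (Observations \ref{basisobs} and \ref{gluingobs}) --- but yours buys freedom from the basis-element reduction and from verifying that the projected sets $\bar{A}_i^j$ are open in $\partial T_j$, at the cost of a little more bookkeeping, while the paper's buys brevity and makes explicit the picture of $\partial\dl_2(q)$ as two tree boundaries glued along deleted points. The one ingredient you should cite explicitly is that $C_0^i$ (not just $C_n^i$ for $n>0$) is compact in the subspace topology; this is exactly the paper's remark that every $C_n^i$, $n\geq 0$, is homeomorphic to the Cantor set.
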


\begin{proof}

Let $\mathcal{A} = \{A_i\}_{i\in I}$ for some index set $I$ be an open cover of $\partial \dl_2(q)$. Without loss of generality, we may assume that these open sets are basis elements.

As sets, $\partial T_0 \sqcup \partial T_1 = \partial \dl_2(q) \sqcup \{\omega_0, \omega_1\}$. 
We extend $\mathcal{A}$ to a cover $\bar{\mathcal{A}}$ of $\partial T_0 \sqcup \partial T_1$ 
by defining
\[\bar{A_i} = A_i \cup \{ x \mid x = \omega_j \text{ for } j=0,1 \text{ and } C_{k,\infty}^{j} \subseteq A_i \text{ for } k>0\}. \]
Since $\mathcal{A}$ covers $\partial \dl_2(q)$,  
Observation \ref{gluingobs} ensures $\bar{\mathcal{A}}$ covers $\partial T_0 \sqcup \partial T_1$ (one can also see this from Observation \ref{basisobs} since our sets $A_i$ are assumed to be basis elements).

We now define covers $\bar{\mathcal{A}}^0$ and $\bar{\mathcal{A}}^1$ of $\partial T_{0}$ and  $\partial T_{1}$, respectively by $\bar{A}_i^0 = \bar{A}_i\cap\partial T_0$ and $\bar{A}_i^1 = \bar{A}_i\cap\partial T_1$. Since the $A_i$ are basis elements, it is easy to verify from Observation \ref{basisobs} that these projections $\bar{A}_i^0$ and $\bar{A}_i^1$ are open sets in the boundaries of the trees.

 
Thus, the covers $\bar{\mathcal{A}}^0,\bar{\mathcal{A}}^1$ are open. Since $\partial T_{0}$ and  $\partial T_{1}$ are compact, there exists a finite $F\subset I$ such that $\{\bar{A}_i^0\}_{i\in F}$ covers $\partial T_{0}$ and $\{\bar{A}_i^1\}_{i\in F}$ covers $\partial T_{1}$. Then $\{A_i\}_{i\in F}$ is a finite subcover of $\partial \dl_2(q)$.

\end{proof}

\subsection{Connectedness}

We have been considering the visual boundary through the Cantor sets $C_n^i$ and punctured Cantor sets $C_{k,\infty}^i$, so it is reasonable to expect that the visual boundary is disconnected in a similar manner to a Cantor set.

\begin{observation}
\label{disconnectedobs}

$\partial \dl_2(q)$ is totally disconnected.

\end{observation}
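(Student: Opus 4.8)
The plan is to show directly that every connected subset $S\subseteq\partial\dl_2(q)$ contains at most one point, by trapping $S$ inside one of the Cantor sets $C_n^i$ and then invoking total disconnectedness of the Cantor set itself. This sidesteps the usual ``clopen sets separate points'' strategy, which (as explained below) is unavailable here.

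First I would exploit the clopen sets already in hand. By Lemma~\ref{openobs} each $C_n^i$ with $n>0$ is open, and by the observation that $C_n^i$ is closed for all $n\geq 0$, such a $C_n^i$ is in fact clopen whenever $n>0$. Since these sets are pairwise disjoint, for a connected $S$ the intersection $S\cap C_n^i$ is clopen in $S$ and hence equals $\emptyset$ or $S$. Thus either $S$ is contained in a single $C_n^i$ with $n>0$, or $S$ meets no such set and therefore $S\subseteq C_0^0\cup C_0^1$.

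To handle the remaining case I would produce a clopen set separating the two $n=0$ pieces. Applying the observation that $C_0^i\cup C_{k,\infty}^{1-i}$ is open with $k=1$ to both $i=0$ and $i=1$ shows that $U=C_0^0\cup C_{1,\infty}^1$ and its complement $C_0^1\cup C_{1,\infty}^0$ are both open, so $U$ is clopen. For $S\subseteq C_0^0\cup C_0^1$ we have $S\cap U=S\cap C_0^0$ and $S\setminus U=S\cap C_0^1$, a clopen partition of $S$, so connectedness forces $S\subseteq C_0^0$ or $S\subseteq C_0^1$. In every case $S$ lies inside a single $C_n^i$, which is homeomorphic to the Cantor set and hence totally disconnected; since the subspace topology is transitive, $S$ is a connected subset of that Cantor set and is therefore a single point.

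The subtle point, and the reason a more naive argument fails, is that one \emph{cannot} separate two distinct points of $C_0^i$ by clopen subsets of the whole boundary. Indeed, by the gluing phenomenon any open set meeting $C_0^i$ contains some $C_{k,\infty}^{1-i}$, and every neighborhood of every point of $C_0^i$ meets $C_{k,\infty}^{1-i}$, so $C_0^i\subseteq\overline{C_{k,\infty}^{1-i}}$. Consequently a clopen set that meets $C_0^i$ is open, hence contains a neighborhood of one of its points and thus some $C_{k,\infty}^{1-i}$, and being closed it must contain $\overline{C_{k,\infty}^{1-i}}\supseteq C_0^i$; so it swallows all of $C_0^i$. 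This is precisely the non-Hausdorff pathology: the quasi-components are not singletons, so total disconnectedness cannot be read off from clopen separation. The resolution is the step above: rather than separating points inside $C_0^i$, I only need to confine a connected set to the single totally disconnected subspace $C_0^i$, where connectedness in the subspace topology finishes the argument.
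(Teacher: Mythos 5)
Your proof is correct, and its skeleton matches the paper's: trap a connected set $S$ inside a single Cantor piece $C_n^i$ using clopen sets, then quote total disconnectedness of the Cantor set. Where you genuinely add something is the case $S\subseteq C_0^0\cup C_0^1$. The paper disposes of this in one line by asserting that $S$ is then ``a subset of a Cantor set'' --- implicitly, that the subspace topology on $C_0^0\cup C_0^1$ is the disjoint union of two Cantor topologies, i.e.\ that no gluing occurs between the two pieces inside that subspace. You justify exactly this point: by the observation that $C_0^i\cup C_{k,\infty}^{1-i}$ is open (applied with $k=1$ and both $i=0,1$), the set $U=C_0^0\cup C_{1,\infty}^1$ and its complement $C_0^1\cup C_{1,\infty}^0$ are both open, so $U$ is clopen and splits $S$ into the relatively clopen pieces $S\cap C_0^0$ and $S\cap C_0^1$; connectedness then confines $S$ to a single $C_0^i$. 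This makes explicit a step the paper glosses over, and is arguably the more careful write-up of the same argument. Your closing remark is also correct and worth keeping: any clopen set meeting $C_0^i$ contains, being open, some $C_{k,\infty}^{1-i}$ (Observation \ref{gluingobs}), and, being closed, contains $\overline{C_{k,\infty}^{1-i}}\supseteq C_0^i$; hence quasi-components of $\partial\dl_2(q)$ are not singletons, so the boundary is totally disconnected but not totally separated --- a distinction that cannot arise for the Hausdorff boundaries of CAT(0) or hyperbolic spaces, and which explains why the argument must confine $S$ rather than separate points by clopen sets.
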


\begin{proof}

Let $S$ be a subset of $\partial \dl_2(q)$ containing at least two elements.

Suppose that $S\cap C_n^i\neq \varnothing$ for some $n>0$ and some $i\in\{0,1\}$. If $S\subseteq C_n^i$, then $S$ is disconnected since $C_n^i$ is a Cantor set. Else, since $C_n^i$ is both open and closed, it and its complement form a separation of $S$.

If $S\cap C_n^i = \varnothing$ for all $n>0$ and $i\in\{0,1\}$, then $S\subseteq \left( C_0^0\cup C_0^1\right)$. So $S$ is a subset of a Cantor set, and thus is disconnected.

\end{proof}

\subsection{Intuitive picture of the topology of $\partial\dl_2(q)$}

Intuitively, the visual boundary $\partial\dl_2(q)$ can be viewed as a
pair of punctured Cantor sets in which the punctures are ``filled'' by
a portion of the other Cantor set. Specifically, every open
neighborhood of $\omega_i$ becomes an open neighborhood of
$C_0^{1-i}$. Figure \ref{informalfig} illustrates this notion. 
Clearly, $\partial \dl_2(q)$ is not homogeneous.

\begin{figure}[htbp]
\begin{center}
\includegraphics[width=3in]{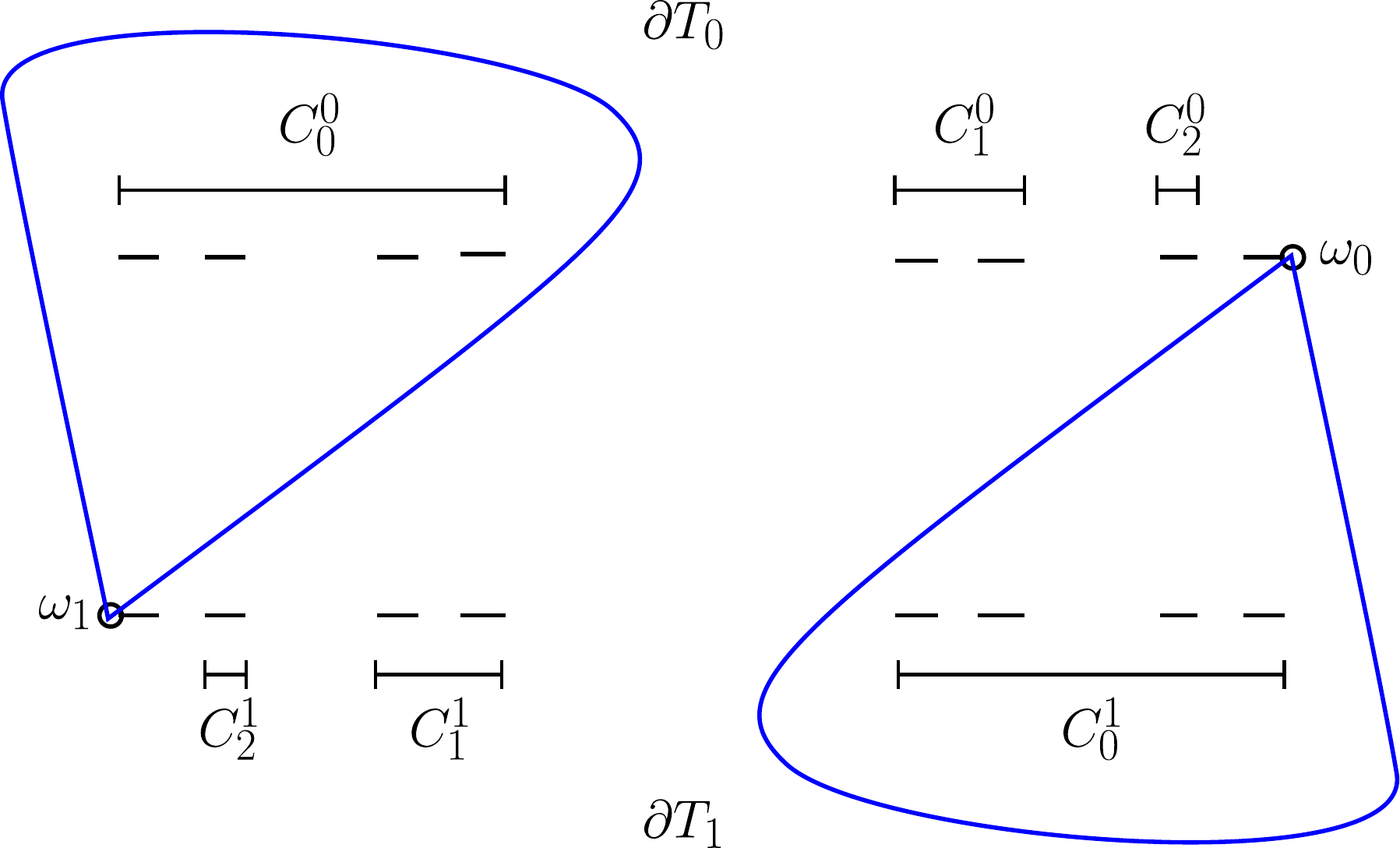}
\caption{An informal visualization of $\partial\dl_2(2)$}
\label{informalfig}
\end{center}
\end{figure}

\subsection{Dynamics of the action of $L_2$ on $\partial\dl_2(2)$}
\label{dynamicssub}

Notice that the exponent sum of $t$ in a word representing an element
$g$ of $L_2$ is equal to the position of the lamplighter in the lamp
stand representation of $g$. Thus, the exponent sum is an invariant of
the group element. In Section \ref{lamplighterbackgroundsubsection},
we
defined the function $exp_t(g)$ to denote the exponent sum of $t$ for $g$.

Notice that for an element $g\in L_2$, if $exp_t(g)=0$, then $g^2$ is trivial (since the second application of $g$ will switch off all the lights that the first application of $g$ switched on). Notice also that if $exp_t(g)\neq 0$, then $g$ will have infinite order since the lamplighter for $g^n$ with  $n\neq0$ is at position $n\cdot exp_t(g)\neq 0$. In other words, for $g\in L_2$ non-trivial, then the order of $g$ is either 2 (when $exp_t(g)=0$) or infinite (when $exp_t(g)\neq 0$).

\begin{definition}

Let $g\in L_2$ with $exp_t(g)>0$. Its lamp stand has no lit lamps below some position $m$. Consider the lamp stand  for $g^n$ for $n\in\mathbb{N}$. The lamplighter for $g^n$ is at position $n\cdot exp_t(g)$ and no matter how many more times we multiply by $g$, the lamps below position $n\cdot exp_t(g)+m$ will not be switched again. Thus, since $n\cdot exp_t(g)+m\to\infty$ as $n\to\infty$, we have a well-defined lamp stand for $g^\infty$. This lamp stand can be realized by a geodesic ray in $\dl_2(2)$ (since it is the Cayley graph of $L_2$) by starting with $t^m(at)$ and then multiplying by $t$ or $at$ for each successive lamp, depending on whether the lamp is lit or unlit in $g^\infty$. Thus, $g^\infty$ is an element of $\partial\dl_2(2)$.

We can similarly define $g^\infty$ for $g$ with $exp_t(g)<0$ (except that it will have no lit lamps \emph{above} $m$).

\end{definition}

Intuitively, $g^\infty$ is the ``lamp stand limit'' of $g^n$. For example, $t^\infty$ is the lamp stand with no lit lamps and the lamplighter at $+\infty$.

\begin{definition}

For $g\in L_2$ with $exp_t(g)\neq0$, we define $g^{-\infty}$ to be $(g^{-1})^\infty$.

\end{definition}

\begin{theorem}
\label{actionthm}

If a non-trivial element $g$ of $L_2$ has $exp_t(g) =0$, then its action on $\partial \dl_2(2)$ will be periodic of order 2. Otherwise, $g$ will act with north-south dynamics on the boundary, with the attractor in $(\partial T_0 - \omega_0)\times \omega_1$ and the repeller in $(\partial T_1 - \omega_1)\times \omega_0$ if $exp_t(g)>0$ and vice versa if $exp_t(g)<0$.

\end{theorem}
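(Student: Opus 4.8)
The plan is to split the proof according to the value of $\exp_t(g)$, handling the order-$2$ case first and then establishing north-south dynamics for the infinite-order case using the explicit action on the lamp stand model from Observation \ref{genactionobs}.

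\textbf{The $\exp_t(g)=0$ case.} Here I would invoke the earlier observation that $g^2 = 1$ whenever $\exp_t(g)=0$. Since the action of $L_2$ on $\partial\dl_2(2)$ is by homeomorphisms (it comes from an isometric action on $\dl_2(2)$, extended to the boundary), the induced map $[\gamma]\mapsto g\cdot[\gamma]$ satisfies $g^2\cdot[\gamma] = [\gamma]$ for all $[\gamma]$. I would then need to rule out that $g$ acts trivially, i.e. exhibit some boundary point not fixed by $g$; a convenient choice is a class in $C_0^0$, whose lamp configuration below position $0$ is altered by the lamp-switching prescribed by $g$. This shows the action has order exactly $2$, hence is periodic of order $2$.

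\textbf{The $\exp_t(g)\neq0$ case.} Assume $\exp_t(g)>0$ (the case $<0$ is symmetric, swapping the roles of the two trees). Using the lamp stand interpretation of the action, I would first identify the candidate attractor and repeller as $g^\infty \in (\partial T_0 - \omega_0)\times\omega_1$ and $g^{-\infty}\in (\partial T_1-\omega_1)\times\omega_0$, and verify these are genuine fixed points: applying $g$ to the lamp stand of $g^\infty$ shifts the lamplighter further toward $+\infty$ and relights exactly the same eventual pattern, so $g\cdot g^\infty = g^\infty$ as asymptotic classes (the finite pre-turn discrepancy is absorbed, as noted in Section \ref{lampstandsub}). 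The heart of the argument is the convergence statement: for any $[\gamma]$ other than the repeller $g^{-\infty}$, the sequence $g^n\cdot[\gamma]$ converges to $g^\infty$, and the convergence is uniform on compact subsets of $\partial\dl_2(2)\setminus\{g^{-\infty}\}$, with the reverse statement for $g^{-n}$ and the repeller. Concretely, applying $g^n$ shifts the lit lamps by $n\cdot\exp_t(g)\to+\infty$, so for any $[\gamma]\in(\partial T_0-\omega_0)\times\omega_1$ (lamplighter at $+\infty$), the lamp configuration of $g^n\cdot[\gamma]$ agrees with that of $g^\infty$ on longer and longer initial segments; matching this against the basis sets of Observation \ref{basisobs} gives convergence in the visual boundary topology.

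\textbf{The main obstacle} is handling the boundary points lying in the \emph{opposite} half, namely $[\gamma]\in(\partial T_1-\omega_1)\times\omega_0$ with lamplighter at $-\infty$, since one must show $g^n\cdot[\gamma]\to g^\infty$ even though $[\gamma]$ and the attractor sit in different pieces of the disjoint union. This is exactly where the non-Hausdorff ``gluing'' of the two Cantor sets does the work: by Observation \ref{gluingobs}, every open neighborhood of a point of $C_0^0$ (which contains the attractor's half) necessarily contains $C_{k,\infty}^1$ for some $k$, so as $g^n$ pushes the bottom-out height of $[\gamma]$ in $T_1$ toward $+\infty$, the images $g^n\cdot[\gamma]$ eventually enter every basic neighborhood of $g^\infty$. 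I would need to check carefully that the only class escaping this convergence is the repeller $g^{-\infty}$ itself, and record that the convergence fails to be uniform precisely at that point, which is the correct north-south picture. The symmetric treatment of $g^{-n}$ attracting to $g^{-\infty}$ then completes the dynamics.
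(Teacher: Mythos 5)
Your order-$2$ case and your treatment of $[\gamma]$ in the attractor's half coincide with what the paper actually proves: the same lamp stand estimate (if $m$ is the lowest lit lamp of $[\gamma]$, then below position $n\cdot exp_t(g)+m$ the stand of $g^n\cdot[\gamma]$ agrees with that of $g^n$), matched against the basis sets of Observation \ref{basisobs}. The paper writes out only these parts and dispatches everything else with ``Similar arguments show the rest of the result.'' The genuine gap is in your ``main obstacle'' paragraph, in the step asserting that for $[\gamma]$ in the opposite half, $g^n\cdot[\gamma]$ eventually enters every basic neighborhood of $g^\infty$. Observation \ref{gluingobs} applies to neighborhoods of points of $C_0^0$, so your argument implicitly assumes $g^\infty\in C_0^0$ (your parenthetical ``which contains the attractor's half'' conflates $C_0^0$ with the attractor's half, of which it is only a proper subset). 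But $g^\infty$ lies in $C_N^0$ where $-N$ is the lowest lit lamp of its stand, and $N>0$ is perfectly possible.

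Concretely, let $g=t^{-1}at^{2}$, so $exp_t(g)=1$ and the stand of $g$ has exactly lamp $-1$ lit with the lamplighter at $1$. Then $g^{n}$ lights lamps $-1,0,\dots,n-2$, so $g^\infty$ (all lamps at positions $\geq-1$ lit, lamplighter at $+\infty$) lies in $C_1^0$, and by Observation \ref{basisobs}, for $k>1$ the neighborhood $B_{[0,k]}(g^\infty,\epsilon)$ consists only of classes in $C_1^0$ --- all with lamplighter at $+\infty$. Now take $[\gamma]=t^{-\infty}$ (no lamps lit, lamplighter at $-\infty$); note $[\gamma]\neq g^{-\infty}$, since $g^{-\infty}$ has every lamp at position $\leq-2$ lit. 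Because the two halves of the boundary are invariant under the action (Section \ref{actionsub}), every image $g^n\cdot[\gamma]$ keeps its lamplighter at $-\infty$ --- indeed $g^n\cdot[\gamma]\in C_{n-1}^1$ for $n\geq2$ --- so it never enters $B_{[0,k]}(g^\infty,\epsilon)$ for $k>1$, and $g^n\cdot[\gamma]\not\to g^\infty$. (This orbit does converge, by the gluing phenomenon, to \emph{every} point of $C_0^0$, just not to $g^\infty\in C_1^0$.) So your cross-half convergence holds precisely when $g^\infty\in C_0^0$ and fails otherwise. You put your finger on the right difficulty --- it is exactly what the paper's ``similar arguments'' elides --- but the proposed resolution does not work; carried out correctly, your analysis shows that the strict north-south statement across both halves requires qualification rather than proof.
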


\begin{proof}

If $exp_t(g)=0$, then the action of $g$ on an element of $\partial\dl_2(q)$ will simply be to switch a finite set of lamps (the ones that are lit in the lamp stand interpretation of $g$).

For $g\in L_2$ with $exp_t(g)>0$ and $[\gamma]\in\partial\dl_2(q)$, define $[\gamma_n]\in\partial\dl_2(q)$ to be $g^n\cdot[\gamma]$. Assume $[\gamma]$ (and thus $[\gamma_n]$ for all $n$) has the lamplighter at $+\infty$ (i.e. $[\gamma]\in(\partial T_0 - \omega_0)\times \omega_1$). Thus, there is a minimum lit lamp, say at position $m$, in the lamp stand for $[\gamma]$. Then in the representation for $g^n\cdot[\gamma]$, all lamps at positions below $n\cdot exp_t(g)+m$ will be lit or unlit according to $g^n$'s lamp stand. For any $[0,k]\subseteq[0,\infty)$ compact and any $0<\epsilon<1$, let $n$ be large enough so that $n\cdot exp_t(g)+m>k$. Then notice that $[\gamma_n] \in B_{[0,k]}(g^\infty, \epsilon)$. Thus, $[\gamma_n] \to g^\infty$.

Similar arguments show the rest of the result.

\end{proof}

\begin{corollary}
\label{actioncor}

The action on $\partial\dl_2(2)$ of a non-torsion element of $L_2$ is hyperbolic.

\end{corollary}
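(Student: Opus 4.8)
The plan is to show that Corollary \ref{actioncor} follows almost immediately from Theorem \ref{actionthm}, once we unwind what ``hyperbolic'' and ``non-torsion'' mean in this context. First I would recall that a non-torsion element of $L_2$ is precisely one of infinite order, and by the order computation preceding Definition of $g^\infty$ (an element is torsion of order $2$ exactly when $exp_t(g)=0$, and of infinite order otherwise), a non-torsion element $g$ is exactly one with $exp_t(g)\neq 0$. This is the hypothesis that places us in the second case of Theorem \ref{actionthm}.

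The core of the argument is then to invoke Theorem \ref{actionthm} directly: for $g$ with $exp_t(g)\neq 0$, the action of $g$ on $\partial\dl_2(2)$ has north-south dynamics, with a single attracting fixed point $g^\infty$ and a single repelling fixed point $g^{-\infty}$ lying in opposite pieces of the boundary. I would then point out that ``hyperbolic'' is, by definition, exactly the property of acting with north-south dynamics: there are precisely two fixed points, one attracting and one repelling, and every other point converges to the attractor under forward iteration and to the repeller under backward iteration. So the corollary is a restatement of the second clause of Theorem \ref{actionthm} under the dictionary \emph{non-torsion} $\Leftrightarrow exp_t(g)\neq 0$.

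The only point requiring care — and the step I expect to be the main (though minor) obstacle — is confirming that north-south dynamics as established in Theorem \ref{actionthm} matches whatever definition of ``hyperbolic'' the paper intends for a homeomorphism of a non-Hausdorff space. In the standard hyperbolic/CAT(0) setting, a hyperbolic isometry is one that translates along an axis and acts on the boundary with north-south dynamics, and the convergence statements in the proof of Theorem \ref{actionthm} (namely $g^n\cdot[\gamma]\to g^\infty$, with the symmetric statement for $g^{-1}$) are exactly the convergence-group north-south picture. Since $\partial\dl_2(2)$ is only $T_1$ and not Hausdorff, I would note explicitly that the fixed points $g^\infty$ and $g^{-\infty}$ are genuinely distinct (they lie in the two different invariant pieces $(\partial T_0-\omega_0)\times\omega_1$ and $(\partial T_1-\omega_1)\times\omega_0$), so the north-south behavior is non-degenerate. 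With that observation, the corollary follows immediately, and I would phrase the proof as a one-line deduction: a non-torsion $g$ has $exp_t(g)\neq 0$, whence Theorem \ref{actionthm} gives north-south dynamics, which is precisely the assertion that the action of $g$ is hyperbolic.
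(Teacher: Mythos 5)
Your proposal is correct and matches the paper's intent exactly: the paper gives no separate proof of Corollary \ref{actioncor}, treating it as an immediate consequence of Theorem \ref{actionthm} via the dictionary \emph{non-torsion} $\Leftrightarrow exp_t(g)\neq 0$ established just before the definition of $g^\infty$. Your extra remark that $g^\infty$ and $g^{-\infty}$ lie in the two distinct invariant pieces of the boundary (so the north--south dynamics is non-degenerate despite the failure of the Hausdorff property) is a sensible precaution, and consistent with what the paper proves.
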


\section{$\partial\dl_d(q)$ for $d>2$}
\label{dldqsection}

\subsection{Geodesics in $\dl_d(q)$}

Label each edge of each tree $T_i$ by 
an $\alpha \in \{0,\dots,q-1\}$, so that for each vertex $v \in T_i$ 
the edges moving up from $v$ correspond to $\{0,\dots, q-1\}$.
Modifying a concept from \cite{steintaback}, say an edge of $\dl_d(q)$ has
{\em type} $(i(\alpha) - j)$, $0\leq i\neq j < d$, $0\leq \alpha < q$, if it
ascends in $T_i$ along an edge labeled $\alpha$ and descends in $T_j$, or
$(i(\alpha) - j(\beta))$ if we wish to keep track of the descending label as
well. If
$i,j,k,l$ are pairwise distinct, any edge of type $(i(\alpha)-j)$ ``commutes''
with any edge of type $(k(\beta)-l)$, in the sense that, given an initial
vertex in $\dl_d(q)$, the two (uniquely
determined) paths of type 
$(i(\alpha)-j)(k(\beta)-l)$ and 
$(k(\beta)-l)(i(\alpha)-j)$ have the same terminal vertex. Moreover, 
$(i(\alpha)-j)$ commutes with $(k(\beta)-j)$. 
In addition, an adjacent pair of edges having
type $(i(\alpha)-j)(k(\beta)-i)$ can be replaced by the single edge of type
$(k(\beta)-j)$, since this pair creates an unnecessary backtrack in $T_i$.
Finally, $(j(\alpha)-i(\beta))(i(\beta')-k)$ can be
replaced with $(j(\alpha)-k)$ if and only if $\beta = \beta'$, as that is the only case
with backtracking.

This notation gives us a way to define turns in the $d>2$ case. 

\begin{definition}
\label{defturn}
A \emph{turn} in $T_i$ in a path in $\dl_d(q)$ is a subpath that
begins with an edge of type $(j(\alpha) -i)$ for some $j$ and $\alpha$, 
ends with an edge of type $(i(\beta) - k)$ for some $k$ and $\beta$, and 
no other edge type in the subpath involves $i$. 

In the case $d=2$, this is equivalent to our definition in 
Section \ref{asymptoticclassessection}. 

\end{definition}

We now show that any path whose
projection to $T_i$ turns back up the same edge is not geodesic. 

\begin{lemma}
\label{downthenimmediatelyuplemma}
Let $p$ be path in $\dl_d(q)$ following edges $e_1,e_2,\dots,e_n$ in order. Suppose for some
$0 \leq t < s \leq n$, $e_t$ is of type $(j(\alpha)-i(\beta))$, and $e_s$ is of 
type $(i(\beta)-k)$,
and all edges between $e_t$ and $e_s$ do not involve $T_i$. Then $p$ is not
geodesic.
\end{lemma}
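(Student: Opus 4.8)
The plan is to show directly that $p$ admits a strictly shorter path with the same endpoints, so it cannot be geodesic. The essential point is that $e_t$ descends in $T_i$ along the edge labeled $\beta$ while $e_s$ ascends in $T_i$ along the same label $\beta$, and nothing between them moves in $T_i$; thus these two moves form a backtrack in $T_i$ that ought to be removable. The tool for removing it is the local replacement $(j(\alpha)-i(\beta))(i(\beta)-k)\mapsto(j(\alpha)-k)$ recorded before Definition \ref{defturn}, which however applies only once $e_t$ and $e_s$ are adjacent. So the first task is to slide $e_s$ leftward until it meets $e_t$, using the commutation relations, and the second is to apply the replacement.

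I would argue by induction on the number $g=s-t-1$ of edges strictly between $e_t$ and $e_s$ (the \emph{gap}); throughout, these edges never involve $T_i$, so each shares at most the tree $T_k$ with $e_s=(i(\beta)-k)$. Let $e_{s-1}$ be the edge immediately preceding $e_s$, of some type $(a-b)$ with $a,b\neq i$, and distinguish three cases according to how $e_{s-1}$ meets $T_k$. If $b=k$, the two edges have the same descending tree and commute; if $k\notin\{a,b\}$, the four trees $i,k,a,b$ are pairwise distinct and they again commute. In either commuting case I swap $e_s$ past $e_{s-1}$, which preserves the type $(i(\beta)-k)$ of $e_s$ and leaves every edge between $e_t$ and $e_s$ free of $T_i$, while reducing the gap by one. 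The remaining case is $a=k$: then $e_{s-1}$ ascends in $T_k$ immediately before $e_s$ descends in $T_k$, so these moves form a forced backtrack in $T_k$, and the replacement of the form $(k(\gamma)-b)(i(\beta)-k)\mapsto(i(\beta)-b)$ collapses the pair to a single edge. This already yields a path with the same endpoints but one fewer edge, so $p$ is not geodesic.

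Thus at each stage either the gap strictly decreases or a shorter path is exhibited outright. Since the gap is a nonnegative integer, after finitely many commutations I either perform a collapse in the case $a=k$ or reach $g=0$. In the latter case $e_t$ and $e_s$ are adjacent and of types $(j(\alpha)-i(\beta))$ and $(i(\beta)-k)$ with matching middle label $\beta$ (guaranteed because the intervening edges leave $T_i$ fixed, so $e_s$ ascends along the very edge $e_t$ descended), so the replacement $(j(\alpha)-i(\beta))(i(\beta)-k)\mapsto(j(\alpha)-k)$ applies and shortens $p$ by one edge. In every case $p$ is seen not to be geodesic.

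The main obstacle is the bookkeeping in the induction: one must check that sliding $e_s$ leftward genuinely preserves the hypotheses---that $e_s$ retains its type $(i(\beta)-k)$ and that no edge between $e_t$ and $e_s$ ever acquires a move in $T_i$---and, above all, that the trichotomy on $e_{s-1}$ is exhaustive, i.e. that an edge avoiding $T_i$ can fail to commute with $e_s$ only by ascending in $T_k$, in which case it necessarily backtracks against the descent of $e_s$ and is absorbed by the $T_k$-collapse. Once this case analysis is in place, termination and the final length-reducing replacement are routine.
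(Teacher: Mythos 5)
Your proof is correct and takes essentially the same approach as the paper's: both slide $e_s$ leftward using the commutation relations and then delete a backtrack, either in $T_k$ upon meeting an edge that ascends in $T_k$, or in $T_i$ upon reaching $e_t$. The only difference is organizational --- the paper performs all the commutations in one step by locating $y=\max\{x \mid a_x = k\}$ (or noting no such $x$ exists), while you carry out one swap at a time by induction on the gap $s-t-1$.
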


\begin{proof}
We have a sequence of edge types 
\[ (j(\alpha)-i(\beta)) (a_0(\delta_0) - b_0)  \dots
(a_{s-t-1}(\delta_{s-t-1}) - b_{s-t-1}) (i(\beta) - k), \]
$a_x$, $b_x$, $i$ pairwise distinct, $0 \leq x < s-t$.  Using the commuting
relations discussed above, we may replace this subsequence with either 
\[ (j(\alpha)-i(\beta))(i(\beta)-k) (a_0(\delta_0)-b_0) \dots (a(\delta_{s-t-1})_{s-t-1} - b_{s-t-1}) \] 
if there is no $x$ with $a_x = k$, or if such an $x$ exists,  by 
\[ (j(\alpha)-i(\beta)) \dots (a_y(\delta_y) - b_y) (i(\beta)-k) \dots
(a_{s-t-1}(\delta_{s-t-1}) - b_{s-t-1}) \]
(where \mbox{$y = \max\{ x \mid a_x=k\}$}).
In either case, again by the preceding discussion, we may replace a two edge
subsequence  with
a single edge, without affecting the endpoints of the
subsequence. Hence, a shorter path is found, and $p$ is not geodesic.
\end{proof}

The following observation is trivial, but will be key in the proof of Theorem
\ref{geodesicturnsthm}. 

\begin{observation}
\label{straighteningobs}
Let $\pi$ be a path in $\dl_d(q)$ containing a subpath $\rho$ of
length $l$.
Consider the family $P_\rho$ of paths of length $l$ that begin at $\rho(0)$ and end at $\rho(l)$.
For any $\rho' \in P_\rho$, the path $\pi'$ constructed from $\pi$ by replacing
$\rho$ with $\rho'$ has the same initial and terminal vertices and the same
length as $\pi$.
{\raggedright \qed}
\end{observation}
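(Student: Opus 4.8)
The plan is to reduce everything to a concatenation argument. I would write $\pi$ as a vertex sequence $(\pi(0), \pi(1), \dots, \pi(n))$ and suppose that the subpath $\rho$ occupies the consecutive indices $r, r+1, \dots, r+l$, so that $\rho(0) = \pi(r)$ and $\rho(l) = \pi(r+l)$. Then $\pi$ decomposes as a concatenation $\alpha * \rho * \beta$, where $\alpha = (\pi(0), \dots, \pi(r))$ is the initial segment terminating at $\rho(0)$ and $\beta = (\pi(r+l), \dots, \pi(n))$ is the terminal segment starting at $\rho(l)$.

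Next I would use the defining property of $P_\rho$: any $\rho' \in P_\rho$ satisfies $\rho'(0) = \rho(0) = \pi(r)$ and $\rho'(l) = \rho(l) = \pi(r+l)$. Hence the splice $\alpha * \rho' * \beta$ is still a legitimate path, since $\alpha$ ends exactly where $\rho'$ begins and $\rho'$ ends exactly where $\beta$ begins; this splice is precisely $\pi'$. Because $\alpha$ and $\beta$ are left untouched, $\pi'(0) = \alpha(0) = \pi(0)$ and the terminal vertex of $\pi'$ is the terminal vertex of $\beta$, namely $\pi(n)$, so the endpoints agree. For the length I would simply add the three pieces: $\pi'$ has length $r + l + (n - r - l) = n = \mathrm{len}(\pi)$, using that $\rho' \in P_\rho$ has length exactly $l$.

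There is no genuine mathematical obstacle here — as the authors note, the statement is essentially bookkeeping. The only point that deserves a moment's care is checking that the seams $\alpha * \rho'$ and $\rho' * \beta$ really do form a path in $\dl_d(q)$, i.e. that no illegitimate ``edge'' is introduced at the two splice points. This is immediate, however, because the matching endpoint conditions coming from $P_\rho$ guarantee that the gluing vertices \emph{coincide} rather than merely being adjacent, so $\pi'$ inherits all of its edges either from $\alpha$, from $\beta$, or from $\rho'$, and no new adjacency need be verified at the seams.
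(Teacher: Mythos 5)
Your proof is correct, and it is exactly the bookkeeping that the paper leaves implicit: the authors state this observation with no written proof (it is marked as trivial), and your decomposition $\pi = \alpha * \rho * \beta$ with the splice $\alpha * \rho' * \beta$, together with the endpoint and length checks, is precisely the argument they are suppressing. Nothing more needs to be said.
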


\begin{theorem}
\label{geodesicturnsthm}
A geodesic in $\dl_d(q)$ has no more than one turn in each tree. 
\end{theorem}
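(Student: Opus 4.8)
The plan is to suppose, for contradiction, that a geodesic $p$ in $\dl_d(q)$ has two turns in the same tree $T_i$, and to produce a strictly shorter path with the same endpoints, contradicting Lemma \ref{no2turnraysobs}'s path analog (and, structurally, mirroring the $d=2$ argument of Lemma \ref{no2turnraysobs}). The key tool is the edge-type calculus established above: the commuting relations, the cancellation of an adjacent pair $(i(\alpha)-j)(k(\beta)-i)$ into a single edge $(k(\beta)-j)$, and above all Lemma \ref{downthenimmediatelyuplemma}, which already forbids a descent and re-ascent along the \emph{same} edge $\beta$ of $T_i$ (with only $T_i$-free edges in between). So the real content is reducing the two-turn situation to a configuration in which Lemma \ref{downthenimmediatelyuplemma} applies.

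First I would make precise, using Definition \ref{defturn}, what two turns in $T_i$ means: the projection $p^{(i)}$ to $T_i$ goes down, then up (first turn, bottoming out at some vertex $u_1$), then down again, then up (second turn, bottoming out at $u_2$). Between consecutive $T_i$-involving edges there may be blocks of edges not involving $T_i$, which by the commuting relations of Observation \ref{straighteningobs} I can freely slide past the $T_i$-edges without changing endpoints or length. So after straightening I may assume the edges touching $T_i$ occur consecutively, and I focus on the projected path $p^{(i)}$ in the tree $T_i$ between the two bottom vertices. The crucial geometric fact is that in a tree, the ascent after the first turn and the descent before the second turn share a common subpath: the segment from $u_1$ up toward the common ancestor $u_1 \curlywedge u_2$ and back down to $u_2$ retraces at least one edge. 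In particular there is some edge $\beta$ of $T_i$ that $p^{(i)}$ traverses upward (leaving $u_1$'s side) and then, with no intervening visit that forces it elsewhere, traverses downward again.

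That shared retraced edge is exactly what Lemma \ref{downthenimmediatelyuplemma} rules out: I would locate the last edge $e_t$ of type $(j(\alpha)-i(\beta))$ in the ascent out of the first bottom and the first subsequent edge $e_s$ of type $(i(\beta)-k)$ re-descending along the \emph{same} label $\beta$, arrange (via commuting) that all edges strictly between $e_t$ and $e_s$ avoid $T_i$, and then invoke Lemma \ref{downthenimmediatelyuplemma} to conclude $p$ is not geodesic. The main obstacle, and the step to handle carefully, is the bookkeeping that guarantees such a matched pair $e_t,e_s$ with the same $\beta$ really exists and can be isolated by commutation: I must check that the portion of $p^{(i)}$ between the two bottoms genuinely backtracks on a single tree edge (which holds because a path in a tree joining $u_1$ to $u_2$ after descending below their common ancestor must retrace edges), and that no other $T_i$-edge is trapped between the chosen $e_t$ and $e_s$ in a way the commuting relations cannot remove. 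Once that isolation is secured, the conclusion is immediate from Lemma \ref{downthenimmediatelyuplemma}.
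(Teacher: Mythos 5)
There is a genuine gap, and it sits exactly at the step you flagged as ``the main obstacle.'' Between two turns in $T_i$, the projection $p^{(i)}$ has a \emph{peak}: it ascends out of the first bottom and later descends into the second, so the tree edges it retraces there are traversed \emph{upward first, then downward}. But Lemma \ref{downthenimmediatelyuplemma} forbids the opposite configuration: an edge of $T_i$ traversed downward (an edge of type $(j(\alpha)-i(\beta))$) and then upward (type $(i(\beta)-k)$), with no $T_i$-edges in between. These two configurations are not interchangeable, and the up-then-down retrace is in general \emph{not} an obstruction to being geodesic. Concretely, the two-edge path of types $(i(\beta)-j(\alpha))(j(\delta)-i(\beta))$ with $\alpha\neq\delta$ goes up the edge $\beta$ of $T_i$ and immediately back down it, yet it is a geodesic: its endpoints project to vertices of $T_j$ at distance $2$, so by Observation \ref{lowerboundondistobs} no shorter path exists. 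Moreover, the only retraced pair inside the peak that is adjacent modulo non-$T_i$ edges is the one at the apex, which is precisely of this up-then-down form; all deeper retraced pairs are separated by other $T_i$-edges, and the commuting relations (which apply only to edge types sharing no tree, or sharing only their descending tree) cannot move $T_i$-edges past one another. Your preliminary claim that blocks of non-$T_i$ edges can be ``freely'' slid past $T_i$-edges also fails for the same reason --- $(i(\alpha)-j)$ need not commute with $(j(\beta)-k)$ --- and Observation \ref{straighteningobs} is a substitution principle, not a commutation principle, so it cannot be cited for this either. Finally, note that your stated edge types are internally inconsistent with your geometry: you assign the descending-in-$T_i$ type $(j(\alpha)-i(\beta))$ to an edge ``in the ascent out of the first bottom.''

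What the paper does, and what your argument is missing, is to manufacture a genuine down-then-up retrace by pairing the peak with an edge \emph{outside} it. The projection revisits one of the two bottom vertices, say $v_z=\pi^{(i)}(k)=\pi^{(i)}(l)$, with only the peak in between. The paper invokes Observation \ref{straighteningobs} to replace the entire subpath $\rho=\pi(k),\dots,\pi(l)$ by a path $\rho'$ of the same length and endpoints, chosen so that its first $T_i$-edge ascends along the \emph{same} tree edge $\beta$ that the edge of $\pi$ entering $v_z$ at time $k$ used to descend (or, in the other height case, so that its last $T_i$-edge descends along the edge that $\pi$ ascends just after time $l$). At that junction the modified path $\pi'$ has a configuration $(\cdots-i(\beta))\,(i(\beta)-\cdots)$ with no intervening $T_i$-edges, so Lemma \ref{downthenimmediatelyuplemma} applies to $\pi'$; since $\pi'$ has the same endpoints and the same length as $\pi$, neither is geodesic. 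Your tree-geometry observations (the two bottoms are comparable, the peak retraces edges) are correct, but without this re-routing step the reduction to Lemma \ref{downthenimmediatelyuplemma} does not go through.
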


\begin{proof}
Suppose a path $\pi$ of length $n$ has more than one turn in a tree $T_i$. Let $v_1, v_2 \in T_i$ be
the vertices where the first two turns in $T_i$ bottom out, in order. If $h_i(v_1) \geq h_i(v_2)$, 
then $\pi$ descends to $v_1$, turns, and must descend through $v_1$ again to
reach $v_2$. If $h_i(v_2) \geq h_i(v_1)$, then $\pi$ must 
ascend to $v_2$ (having passed through $v_1$), and in fact ascend above $v_2$, before it can turn at
$v_2$. Either way, there are $k,l \in \integers$ satisfying 
$0 < k < k+1 < l < n$, and $z \in \{1,2\}$ such that   $\pi^{(i)}(k) = v_z =
\pi^{(i)}(l)$. Moreover, by
assumption the
subpath $\rho$ = $\pi(k), \pi(k+1),\dots, \pi(l)$ has no turns in $T_i$, and has
at least one ascent in $T_i$ followed by at least one descent. So, applying
Observation  \ref{straighteningobs},  $\rho$
can be replaced  with a subpath $\rho'$ making the resultant path $\pi'$
satisfy Lemma \ref{downthenimmediatelyuplemma}. (To see this: when $h_i(v_1) \geq
h_i(v_2)$, we can choose $\rho'$ so that $\pi^{(i)}(k-1) = \rho'^{(i)}(k+1) =
\rho'^{(i)}(l-1)$; and if
$h_i(v_2) \geq h_i(v_1)$, then we can choose $\rho'$ so that $\rho'^{(i)}(k+1)
= \rho'^{(i)}(l-1) = \pi^{(i)}(l+1)$.) Hence $\pi'$ is not geodesic;
and since $\pi$ and $\pi'$ have the
same initial and terminal vertices and the same length, neither is $\pi$.
\end{proof}

\begin{corollary}
For any geodesic ray $\gamma$ in $\dl_d(q)$ and any $0\leq i < d$, 
 $\gamma^{(i)}$ approaches at most one end point of $T_i$,
and if $\gamma^{(i)}$ consists of finitely many edges, then $\gamma^{(i)}$ is
eventually constant in $T_i$. {\raggedright \qed}
\end{corollary}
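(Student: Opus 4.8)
The plan is to derive both conclusions from Theorem \ref{geodesicturnsthm}, which bounds the number of turns of a geodesic in each tree by one. First I would note that every initial segment $\gamma|_{[0,n]}$ of the geodesic ray $\gamma$ is itself a geodesic, so Theorem \ref{geodesicturnsthm} applies and gives at most one turn of $\gamma|_{[0,n]}$ in $T_i$. Since the number of turns in $T_i$ is non-decreasing in $n$ and never exceeds $1$, the full ray $\gamma$ has at most one turn in $T_i$.

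Next I would encode the behavior of $\gamma^{(i)}$ as the (finite or infinite) sequence of its height-changing steps: reading $\gamma$ edge by edge and keeping only those edges of $\dl_d(q)$ that involve $T_i$, each such edge either ascends (record $+$) or descends (record $-$) in $T_i$. By Definition \ref{defturn}, a turn in $T_i$ is exactly a descent followed, with no intervening $T_i$-edges, by an ascent, i.e.\ an occurrence of the pattern $-\,+$ among consecutive recorded steps. The key combinatorial observation is that reversals of direction in this sequence must alternate: immediately after a descent-to-ascent reversal (a turn) the projection is ascending, so the next reversal is necessarily ascent-to-descent, and only a further reversal can produce another turn. Hence the number of reversals is at most twice the number of turns plus one, so by the previous step $\gamma^{(i)}$ reverses direction only finitely often and its height sequence is eventually monotone.

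Finally I would read off the two conclusions. If $\gamma^{(i)}$ traverses only finitely many edges, then after its last $T_i$-step every subsequent edge of $\gamma$ avoids $T_i$, so $\gamma^{(i)}$ is eventually constant and approaches no end of $T_i$. If instead $\gamma^{(i)}$ traverses infinitely many edges, then after the last reversal it is monotone with infinitely many steps, hence unbounded: an eventual descending tail follows predecessors and converges to $\omega_i$, while an eventual ascending tail climbs through a nested chain of successors and converges to a single end of $T_i$. In every case $\gamma^{(i)}$ approaches at most one end.

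I expect the main obstacle to be making the middle step rigorous: precisely matching the turns of Definition \ref{defturn} to the pattern $-\,+$ in the reduced step sequence (taking care that the edges strictly between the descent and the ascent do not involve $T_i$), and justifying that direction reversals genuinely alternate, so that the single-turn bound controls the total number of reversals. Once this bookkeeping is in place, the convergence statements follow immediately from the structure of $T_i$: uniqueness of predecessors forces a monotone descending tail toward $\omega_i$, and a monotone ascending tail determines a unique end.
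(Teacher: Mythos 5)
Your proof is correct and follows exactly the route the paper intends: the paper states this corollary with no written proof, treating it as an immediate consequence of Theorem \ref{geodesicturnsthm}, and your argument simply fills in the routine details (passing from finite geodesics to rays via initial segments, matching turns of Definition \ref{defturn} to descent-to-ascent reversals, and noting that at most one such reversal forces the projection to be eventually monotone, hence eventually constant or convergent to a single end of $T_i$). The bookkeeping you flag as the main obstacle does go through, since consecutive recorded $T_i$-steps have no intervening $T_i$-edges by construction and direction reversals necessarily alternate in type.
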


\subsection{Asymptotic geodesic rays in $\dl_d(q)$}

We will show that $\partial \dl_d(q)$, $d>2$, has the indiscrete
topology.

\begin{definition}
We say that two geodesic rays \emph{have the same ends} if whenever one of them has a projection to a tree that has infinitely many edges, so does the other, and the two projections go to the same end of that tree.
\end{definition}

The visual boundary of $\dl_d(q)$, $d > 2$, will be significantly larger than
that of $\dl_2(q)$, as sets, not just because additional punctured
Cantor sets will be added for the trees, but also because it is no
longer guaranteed that two geodesic rays having the same ends will be 
asymptotic, due to the additional degree
of freedom offered by a third tree. However, since we aim
to show that when $d>2$ the boundary has the indiscrete topology, we
will not delve into this. We will show that any point of
$\partial \dl_d(q)$ is topologically indistinguishable\footnote{Two points 
are topologically indistinguishable from each other if every open set that 
contains one of these points contains the other as well.} 
from a point that
approaches a distinguished end $\omega_i$ in some $T_i$, a
nondistinguished end $e_j$, $j\neq i$, in some $T_j$, and is trivial
in every other tree. Thus, the above issue can be avoided in proving that
$\partial \dl_d(q)$ has the indiscrete topology.

\begin{observation}

\label{distinguishobs}

If $\tau_n$ are geodesic rays in $\dl_d(q)$ that are asymptotic to another geodesic ray $\gamma$ for all $n\in\mathbb{N}$, and $\tau$ is a geodesic ray that is a limit point of $\{\tau_n\}$ in the compact-open topology on geodesic rays before we quotient by the asymptotic equivalence classes, then $[\gamma]$ and $[\tau]$ are topologically indistinguishable elements of $\partial\dl_d(q)$.

\end{observation}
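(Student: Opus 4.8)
The plan is to unwind the definition of topological indistinguishability into two containments and treat them separately, since they are of very different difficulty. Write $\pi$ for the quotient map from the space of geodesic rays (with the compact-open topology) onto $\partial\dl_d(q)$. By the definition in the footnote it suffices to show (I) every open set containing $[\tau]$ contains $[\gamma]$, and (II) every open set containing $[\gamma]$ contains $[\tau]$; equivalently, that each of $[\gamma],[\tau]$ lies in the closure of the other.

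Direction (I) I would get for free from continuity of $\pi$. Since $\tau$ is a limit point of $\{\tau_n\}$, we have $\tau\in\overline{\{\tau_n\}}$, so $[\tau]=\pi(\tau)\in\pi(\overline{\{\tau_n\}})\subseteq\overline{\pi(\{\tau_n\})}=\overline{\{[\gamma]\}}$, the last equality because each $\tau_n$ is asymptotic to $\gamma$ and hence $\pi(\tau_n)=[\gamma]$. Thus $[\tau]\in\overline{\{[\gamma]\}}$, which is exactly (I). Concretely: any basic neighborhood $B_{[0,k]}([\tau],\epsilon)$ meets $\{\tau_n\}$ because $\tau$ is a limit point, and each such $\tau_n$ represents $[\gamma]$, so $[\gamma]$ lies in that neighborhood.

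Direction (II) is the real content, and here the $d>2$ hypothesis must enter: in $\dl_2(q)$ the representatives of a single asymptotic class all coincide from a fixed distance onward (by Lemma \ref{sameheightraysobs} they merge shortly after the turn), so every compact-open limit of them again lies in the class and (II) is automatic; the phenomenon appears only once a third tree gives the representatives of one class enough freedom to fan out and escape it. Using the basis of Observation \ref{obsquotientbasis}, I would reduce (II) to the geometric statement that for every $k$ there is a geodesic ray $\sigma$ asymptotic to $\tau$ with $\sigma|_{[0,k]}=\gamma|_{[0,k]}$. Indeed, any open $U\ni[\gamma]$ satisfies $\pi^{-1}(U)\supseteq B_{[0,k]}(\gamma,\epsilon)$ for some $k$, and $\pi^{-1}(U)$ is saturated, so producing a single $\sigma\sim\tau$ inside $B_{[0,k]}(\gamma,\epsilon)$ forces $\tau\in\pi^{-1}(U)$, i.e. $[\tau]\in U$.

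To build such a $\sigma$ I would fix $k$, choose $n$ large enough that $\tau_n$ agrees with $\tau$ on a segment far beyond $[0,k]$ (possible since $\tau_n\to\tau$), and use $\tau_n\sim\gamma$ together with the lower bound of Observation \ref{lowerboundondistobs} to locate the ends of $\tau$ relative to $\gamma(k)$. I would then let $\sigma$ follow $\gamma$ on $[0,k]$ and splice on a geodesic continuation from $\gamma(k)$ heading to the ends of $\tau$, routing the compensating height changes through a tree not controlled by those ends, which is precisely the extra freedom available when $d>2$. The main obstacle is verifying that the spliced ray is genuinely geodesic and genuinely asymptotic to $\tau$ rather than merely sharing its ends: geodesicity I would check against Lemma \ref{downthenimmediatelyuplemma} and Theorem \ref{geodesicturnsthm}, ensuring the graft produces no down-then-up in a single tree and no second turn in any tree, while asymptoticity I would obtain by arranging the continuation to track the tail of $\tau$ within bounded distance. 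An equivalent packaging, which I might prefer for symmetry, is to produce geodesic rays $\sigma_m\sim\tau$ with $\sigma_m\to\gamma$ in the compact-open topology and then apply the continuity argument of (I) with the roles of $\gamma$ and $\tau$ reversed.
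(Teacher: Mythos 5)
Your direction (I) is exactly the paper's first assertion, and your continuity argument for it is correct. The genuine gap is in direction (II). The reduction you make there is logically valid, but the statement you reduce to --- for every $k$ there is a geodesic ray $\sigma$ asymptotic to $\tau$ with $\sigma|_{[0,k]}=\gamma|_{[0,k]}$ --- does not follow from the hypotheses and is in fact false. The hypotheses are not symmetric in $\gamma$ and $\tau$: they supply representatives of $[\gamma]$ (the $\tau_n$) sharing long prefixes with $\tau$, but say nothing about representatives of $[\tau]$ sharing prefixes with the fixed ray $\gamma$, and a prefix of $\gamma$ can contain an ascent that is both irreversible and incompatible with the ends of $\tau$. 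Concretely, work in $\dl_3(q)$: let $w$ and the first vertex of a ray to an end $e_2$ of $T_2$ be two distinct successors of $o_2$, and let $f_0$ be an end of $T_0$ above $o_0$. Let $\gamma$ traverse (up in $T_2$ to $w$, down in $T_1$) and then (up in $T_0$ toward $f_0$, down in $T_1$) forever; let $\tau$ traverse (up in $T_2$ toward $e_2$, down in $T_1$) forever; let $\tau_n$ agree with $\tau$ for $n$ edges and then traverse (up in $T_0$ toward $f_0$, down in $T_1$) forever. All three are geodesic rays, since their $T_1$-projections descend at unit speed and Observation \ref{lowerboundondistobs} then forces $d(o,\cdot(t))=t$. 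Each $\tau_n$ is asymptotic to $\gamma$: their $T_1$-projections coincide (predecessors are unique), their $T_0$-projections lie on the ray to $f_0$ at constant height difference $n-1$, and their $T_2$-projections are eventually constant at distance $n+1$. Finally $\tau_n|_{[0,n]}=\tau|_{[0,n]}$, so $\tau$ is a limit point of $\{\tau_n\}$; thus all hypotheses of the Observation hold. Yet no geodesic ray $\sigma$ beginning with $\gamma$'s first edge can be asymptotic to $\tau$: a geodesic can never descend in a tree in which it has previously ascended (such a descent backtracks the most recent ascent, since predecessors are unique, and the intervening subpath, which has no edges in that tree, reduces exactly as in Lemma \ref{downthenimmediatelyuplemma} via the commuting relations of Section \ref{dldqsection}), so $\sigma^{(2)}(t)$ stays at or above $w$ forever while $\tau^{(2)}(t)$ sits at height $t$ on the ray to $e_2$, giving $d_{T_2}(\sigma^{(2)}(t),\tau^{(2)}(t))\ge t+1$ and hence, by Observation \ref{lowerboundondistobs}, $\sigma\not\sim\tau$. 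The same example defeats your alternative packaging ($\sigma_m\sim\tau$ with $\sigma_m\to\gamma$), since for large $m$ any such $\sigma_m$ would have to begin with $\gamma$'s first edge.

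The failure is not a fixable detail of the splice; it shows the approach aims at the wrong target. Indistinguishability of $[\gamma]$ and $[\tau]$ cannot in general be witnessed through initial segments of the fixed representative $\gamma$; it has to be witnessed through \emph{other} representatives of $[\gamma]$. That is what the paper's one-line proof does: the same containments $\tau_n\in B_{[0,k]}(\tau,\epsilon)$ that give direction (I), read backwards through the symmetric relation defining the basic sets, say $\tau\in B_{[0,k]}(\tau_n,\epsilon)$; so the basic sets exhibited as containing $[\tau]$ are centered at the representatives $\tau_n$ of $[\gamma]$, never at $\gamma$ itself, and no new rays are constructed at all. (In the example above, $[\gamma]$ and $[\tau]$ are still topologically indistinguishable, consistent with Theorem \ref{indiscretethm}, but every witness passes through representatives of $[\gamma]$ avoiding the excursion to $w$.) Your observations that direction (I) is soft and that for $d=2$ the limit ray simply stays in the class are both correct; but the splicing statement you substitute for the paper's symmetry argument is strictly stronger than the Observation itself and is false under its hypotheses.
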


\begin{proof}

Clearly every neighborhood of $[\tau]$ contains $[\gamma]$ and since the basis definition of the topology is symmetric, every neighborhood of $[\gamma]$ contains $[\tau]$.

\end{proof}

\begin{lemma}

\label{finiteobs}

Let $\gamma$ be a geodesic ray in $\dl_d(q)$ for $d>2$. Partition
the set $\{T_0, T_1, T_2, \dots, T_{d-1}\}$ into sets $\mathcal{I}$
and $\mathcal{F}$, where the projection of $\gamma$ to any tree in 
$\mathcal{F}$ is eventually constant,  and the height of the projection 
to any tree in $\mathcal{I}$ approaches $\pm \infty$.
Then we can construct a geodesic ray $\tau$ that is asymptotic to
$\gamma$ and such that the projection of $\tau$ to any tree in $\mathcal{F}$ is
trivial. (Here trivial means the image is constant at the origin.)

\end{lemma}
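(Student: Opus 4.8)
The plan is to build $\tau$ by copying the moves of $\gamma$ once $\gamma$ has stopped touching the trees in $\mathcal{F}$, while pinning the $\mathcal{F}$-coordinates at the origin and compensating for the resulting height imbalance inside the trees of $\mathcal{I}$.

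First I would record the tool that makes ``geodesic'' easy to verify. Since every edge of $\dl_d(q)$ raises the height in exactly one tree and lowers it in exactly one other, a path of length $N$ from $o$ makes exactly $N$ up-moves in all, distributed among the trees; and in any tree $T_i$ the number of up-moves along a path from $o_i$ to a fixed vertex $w_i$ is at least the ascent of the $T_i$-geodesic from $o_i$ to $w_i$, since extra ascent forces extra descent. Summing over $i$ shows $d(o,w)$ is at least the total ascent of the tree-geodesics to the coordinates of $w$, which refines Observation \ref{lowerboundondistobs}. Consequently, if a ray $\tau$ has the property that each projection $\tau^{(i)}$ is itself a geodesic in $T_i$, then the length-$N$ initial segment (whose up-moves in $T_i$ number exactly the ascent of $\tau^{(i)}|_{[0,N]}$) realizes this lower bound, so it is a geodesic; hence $\tau$ is a geodesic ray. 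This is the criterion I will check for $\tau$. (Note it is not contradicted by the two-turn rays of Lemma \ref{no2turnraysobs}, whose projections backtrack at a peak and so are \emph{not} tree-geodesics.)

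For the construction, note $\mathcal{I}\neq\varnothing$ since a geodesic ray is not eventually constant; and because the heights sum to $0$ while the $\mathcal{F}$-heights stay bounded, $\mathcal{I}$ splits into a nonempty set $\mathcal{I}^+$ of trees whose projection-height tends to $+\infty$ (approaching some end $e_i\neq\omega_i$) and a nonempty set $\mathcal{I}^-$ whose projection-height tends to $-\infty$ (approaching $\omega_i$). By Theorem \ref{geodesicturnsthm} and its corollary, each $\gamma^{(i)}$ has at most one turn and, for $i\in\mathcal{F}$, uses only finitely many edges; so there is a finite time $T_0$ after which every turn has occurred and every $\mathcal{F}$-projection is frozen. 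For $t\geq T_0$, every move of $\gamma$ raises a tree of $\mathcal{I}^+$ and lowers a tree of $\mathcal{I}^-$. Set $C=\sum_{T_i\in\mathcal{F}}h_i(\gamma(T_0))$. I would first steer $\tau$, along a finite initial segment that moves only $\mathcal{I}$-trees and follows their tree-geodesics toward the $e_i$ (resp. toward $\omega_i$), to the configuration agreeing with $\gamma(T_0)$ on every $\mathcal{I}$-coordinate except that a single fixed tree $a_0\in\mathcal{I}^+$ is advanced up its ray by $C$, and trivial on every $\mathcal{F}$-coordinate; the $\mathcal{I}$-heights of this target sum to $0$, since we have removed the total height $C$ that $\gamma$ placed in the $\mathcal{F}$-trees and reinserted it into $a_0$ (both $\mathcal{I}^\pm$ nonempty makes both the reinsertion and the balanced interleaving possible). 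From time $T_0$ on, $\tau$ simply repeats each move of $\gamma$, advancing monotonically along the ascending rays in $\mathcal{I}^+$ and the descending rays in $\mathcal{I}^-$.

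It then remains to check the two properties. Each projection $\tau^{(i)}$ traces a single tree-geodesic — the constant path for $i\in\mathcal{F}$, the descending ray to $\omega_i$ for $i\in\mathcal{I}^-$, and the geodesic ray to $e_i$ for $i\in\mathcal{I}^+$ — so the criterion above shows $\tau$ is a geodesic ray with trivial $\mathcal{F}$-projections. For asymptoticity, $\tau^{(i)}(t)$ and $\gamma^{(i)}(t)$ lie on a common tree-geodesic at arc-parameters differing by the constant shift ($0$ for $i\in\mathcal{I}\setminus\{a_0\}$, and $C$ for $a_0$), while for $i\in\mathcal{F}$ the point $\tau^{(i)}(t)=o_i$ stays within the fixed distance $d_{T_i}(o_i,\gamma^{(i)}(T_0))$ of $\gamma^{(i)}(t)$; by Lemma \ref{upperboundondistobs}, $d(\gamma(t),\tau(t))\leq\sum_i d_{T_i}(\gamma^{(i)}(t),\tau^{(i)}(t))$ is then bounded uniformly in $t$, so $\tau$ is asymptotic to $\gamma$. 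The main obstacle is precisely the tension between the two constraints: pinning the $\mathcal{F}$-coordinates destroys the height-balance $\gamma$ maintained through those trees, and the repair must avoid any backtracking in the projections (which would break the tree-geodesic criterion). The resolving device is that $\gamma$ engages the $\mathcal{F}$-trees only on a finite initial segment, so the entire imbalance is one constant $C$ that can be absorbed once into a single ascending tree, after which $\tau$ tracks $\gamma$ move-for-move.
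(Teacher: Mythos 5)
Your skeleton (a finite steering segment with trivial $\mathcal{F}$-coordinates, followed by move-for-move tracking of $\gamma$, with asymptoticity via Lemma \ref{upperboundondistobs}) matches the paper's proof, and your sufficient criterion for geodesicity --- if every projection $\tau^{(i)}$ is a tree-geodesic, then $\tau$ is geodesic --- is correct and cleanly justified. The gap is in the construction of the steering segment: the parenthetical claim that ``both $\mathcal{I}^\pm$ nonempty makes \dots the balanced interleaving possible'' is false. Every edge of $\dl_d(q)$ must pair an up-move in one tree with a down-move in another; trees of $\mathcal{I}^-$ never supply up-moves (their tree-geodesics toward $\omega_i$ only descend), so the up-move of the very first edge must come from a tree of $\mathcal{I}^+$ whose tree-geodesic begins by ascending. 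If $\gamma$ bottoms out strictly below height $0$ in every tree of $\mathcal{I}^+$, your schedule deadlocks before it starts. Concretely, take $d=3$ and let $\gamma$ have first edge (down in $T_0$, up in $T_2$) and all later edges (up in $T_0$, toward an end $e_0$ lying above $o_0^-$ but not above $o_0$; down in $T_1$). Then $\mathcal{I}^+=\{T_0\}$, $\mathcal{I}^-=\{T_1\}$, $\mathcal{F}=\{T_2\}$, and your initial segment cannot make its first move: the only available ascents are in $T_0$, which still owes a descent.

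Worse, this is not a defect of your particular schedule but of the framework: in this example no ray at all satisfies your constraints. A ray with trivial $T_2$-projection and tree-geodesic projections to $T_0,T_1$ would have $\tau^{(1)}$ consisting only of down-moves, forcing every edge to be (up $T_0$, down $T_1$), so $\tau^{(0)}$ would ascend monotonically from $o_0$ and could never reach $e_0$. The ray the lemma needs here is forced to backtrack: first edge (down $T_0$, up $T_1$), second edge (up $T_0$, down $T_1$), where $T_1$ retraces the same tree edge. That ray is asymptotic to $\gamma$ and is in fact geodesic, but your criterion cannot certify it: its length-$n$ initial segment has length $n$ while $\sum_i u_i = n-1$, so the counting lower bound is not attained, and verifying geodesicity requires a finer argument (essentially the deadlock obstruction itself, showing length $n-1$ is unachievable). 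This is exactly the difficulty that the paper's proof buries in the unproved assertion that a geodesic ray $\rho$ with the prescribed ends and trivial $\mathcal{F}$-projections exists; your proof makes that assertion constructive in a way that is actually false, so the missing content is real: one must allow a bounded ``borrow'' of ascent from an $\mathcal{I}^-$ tree (a backtrack) and prove the resulting ray is still geodesic by a sharper lower bound than $\sum_i u_i$.
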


\begin{proof}

Let $M$ be large enough that for each $T_i \in \mathcal{F}$, 
all edges of $\gamma$ that
project onto $T_i$ come before $M$, and for each tree in
$\mathcal{I}$ in which $\gamma$ bottoms out, $\gamma$ does so before $M$. 

Let $\rho$ be a geodesic ray such that for each tree $T_i$ in
$\mathcal{I}$, 
the projection $\rho^{(i)}$ approaches the same end as the projection 
$\gamma^{(i)}$, chosen so that the projection of $\rho$ to
any tree in $\mathcal{F}$ is trivial, and all of the turns in$\rho$ come 
before $N\geq M$. 

Let $\tau$ be defined by $\tau|_{[0,N]} = \rho|_{[0,N]}$, and for $n > N$, 
the $n$th edge of $\tau$ simply ``tracks'' the $n$th edge of
$\gamma$. That is, when the $n$th edge of $\gamma$ moves upward in some
$T_{i_n}$ and downward in some $T_{j_n}$, 
$\tau$ does the same, choosing the upward branch that takes it toward
the same point of $\partial T_{i_n}$ that $\gamma^{(i_n)}$ approaches. 
Since $\rho|_{[0,N]}$ is a geodesic and all turns in $\rho$ occur
before $N$, $\tau$ is a geodesic ray.

The ray $\tau$ has been chosen so that for each tree $T_i$, for $n
> N$, $d_{T_i}(\tau^{(i)}(n), \gamma^{(i)}(n))$ is constant. 
Lemma \ref{upperboundondistobs} then ensures $\tau$ and
$\gamma$ are asymptotic. 
\end{proof}

\begin{lemma}

\label{infiniteobs}

For $d>2$, let $\gamma$ be a geodesic ray in $\dl_d(q)$ with empty projection to $T_i$. Let $\tau$ be another geodesic ray whose projections in trees other than $T_i$ have the same ends as $\gamma$ and whose projection to $T_i$ is infinite. Then, $[\gamma]$ and $[\tau]$ are topologically indistinguishable.

\end{lemma}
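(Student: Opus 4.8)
The plan is to apply Observation \ref{distinguishobs} with the roles of the two rays exchanged: it suffices to produce a sequence of geodesic rays $\sigma_n$, each asymptotic to $\tau$, such that $\gamma$ is a limit point of $\{\sigma_n\}$ in the compact-open topology (i.e.\ for every $k$ there is an $n$ with $\sigma_n|_{[0,k]} = \gamma|_{[0,k]}$). Taking the observation's ``$\gamma$'' to be our $\tau$, its ``$\tau_n$'' to be $\sigma_n$, and its ``$\tau$'' to be our $\gamma$, the conclusion is exactly that $[\gamma]$ and $[\tau]$ are topologically indistinguishable. The reason to approximate in this direction rather than the reverse is the turn budget of Theorem \ref{geodesicturnsthm}: a ray approximating $\tau$ on a long initial segment has already spent its single allowed turn in $T_i$ climbing toward $\tau$'s end, and so cannot turn back to become asymptotic to $\gamma$; approximating $\gamma$ instead leaves $T_i$ untouched on the initial segment, so the full turn budget in $T_i$ remains available.

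First I would normalize both rays using Lemma \ref{finiteobs}, replacing $\gamma$ and $\tau$ by asymptotic (hence equivalent) representatives whose projections are trivial in every tree on which they are eventually constant. Since ``having the same ends'' forces $\gamma^{(m)}$ and $\tau^{(m)}$ to be simultaneously infinite for $m \neq i$, after this step $\gamma$ and $\tau$ have identical finite/infinite partitions away from $T_i$, are trivial on the common collection $\mathcal{F}$ of finite trees, and run out to the same ends $e_m$ in the common collection $\mathcal{I}$ of infinite trees with $m \neq i$; they differ only in that $\gamma$ is trivial in $T_i$ while $\tau^{(i)}$ runs out to an end $e_i$. The height relation $\sum_m h_m = 0$ applied to $\gamma$ (whose only nontrivial heights lie in $\mathcal{I}$) shows that $\mathcal{I}$ contains at least one tree $T_u$ heading to an end $e_u \neq \omega_u$ (so $h_u \to +\infty$) and at least one tree $T_w$ heading to $\omega_w$ (so $h_w \to -\infty$); these provide the room needed to compensate motion in $T_i$.

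Next I would build $\sigma_n$ in three stages, choosing $k_n \to \infty$ and a time $M$ past all turns of $\tau$. On $[0,k_n]$ set $\sigma_n = \gamma$, which keeps $T_i$ and all of $\mathcal{F}$ trivial. Then insert a finite transition that walks $\sigma_n^{(i)}$ from $o_i$ down to the branch vertex $c_i$ at which the $T_i$-geodesic from $o_i$ to $e_i$ bottoms out, compensating each descent in $T_i$ by an ascent along the ray to $e_u$ in $T_u$; this creates no new turn in $T_u$ (it is already ascending) and places $\sigma_n$ on the ray toward $e_i$ at $c_i$, exactly where $\tau^{(i)}$ also bottoms out. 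Finally, from this point on, have $\sigma_n$ track the edges of $\tau$ from time $M$ onward exactly as in the proof of Lemma \ref{finiteobs}, choosing in each tree the branch heading to the same end as $\tau$. Copying $\tau$'s monotone tail keeps each projection $\sigma_n^{(m)}$ on the ray to $e_m$ (with $T_i$ now ascending from $c_i$ to $e_i$ and $T_w$ available to absorb that ascent), so $d_{T_m}(\sigma_n^{(m)}(t), \tau^{(m)}(t))$ is eventually constant in every tree; Observation \ref{lowerboundondistobs} and Lemma \ref{upperboundondistobs} then give that $\sigma_n$ is asymptotic to $\tau$. Since $\sigma_n$ agrees with $\gamma$ on $[0,k_n]$ and $k_n \to \infty$, $\gamma$ is a limit point of $\{\sigma_n\}$, and Observation \ref{distinguishobs} finishes the argument.

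The main obstacle is verifying that each $\sigma_n$ is genuinely a geodesic ray rather than merely a path of the expected length. The transition and the tracking phase must be arranged so that no tree sees more than one turn: $T_i$ descends once (to $c_i$) and then ascends (to $e_i$), a single turn; the compensating trees $T_u$ and $T_w$ are only pushed further in the monotone direction they already travel; and the finite trees are never touched. By Theorem \ref{geodesicturnsthm} this rules out the obvious obstructions, but to conclude geodesicity I would argue as in Lemma \ref{finiteobs}: the prefix up to the last turn is geodesic (invoking Lemma \ref{downthenimmediatelyuplemma} to check that the transition introduces no shortenable back-and-forth in $T_i$), and the turn-free tail that tracks $\tau$ extends it to a geodesic ray. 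Handling the degenerate cases $e_i = \omega_i$ (pure descent in $T_i$, compensated throughout by $T_u$) and $c_i = o_i$ (no descent stage needed) requires only notational adjustments.
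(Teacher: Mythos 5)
Your construction is workable, but it runs in the opposite direction from the paper's proof, and your stated reason for preferring that direction rests on a misreading of what ``asymptotic'' requires. The paper builds rays $\tau_n$ that follow $\tau$ through its first $n$ edges in $T_i$ and then \emph{track} $\gamma$, freezing the $T_i$-coordinate from that point on. No ``turning back'' in $T_i$ is ever needed: since $\gamma$ has empty projection to $T_i$, the $T_i$-distance between $\tau_n$ and $\gamma$ is eventually constant, the other tree-distances are eventually constant because both rays head to the same ends, and Lemma \ref{upperboundondistobs} then bounds $d(\tau_n(t),\gamma(t))$ uniformly in $t$. The constant $\lambda$ in the definition of asymptotic may depend on the pair of rays, so each $\tau_n$ is asymptotic to $\gamma$ even though the bound grows with $n$. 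Thus the direction you dismissed (``has already spent its single allowed turn in $T_i$ \dots cannot turn back'') is exactly the one the paper uses, and it is the cheaper one: all turns of $\tau_n$ lie inside an initial segment of the given geodesic $\tau$, so geodesicity follows by the same argument as in Lemma \ref{finiteobs}, with no transition segment, no compensating trees $T_u$, $T_w$, and no height-sum bookkeeping.

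Your reversed direction (sequences asymptotic to $\tau$ converging to $\gamma$, then Observation \ref{distinguishobs} applied symmetrically) is legitimate, but it is strictly more expensive, and the expense lands precisely where you flag ``the main obstacle.'' Your $\sigma_n$ has a turn in $T_i$ (down to $c_i$, then up toward $e_i$) occurring \emph{after} the known-geodesic prefix $\gamma|_{[0,k_n]}$, so the criterion implicit in Lemma \ref{finiteobs} --- geodesic prefix containing all turns, followed by a turn-free tracking tail --- does not apply verbatim; you must separately prove that the prefix together with the transition and the subsequent ascent in $T_i$ is geodesic. Citing Theorem \ref{geodesicturnsthm} and Lemma \ref{downthenimmediatelyuplemma} does not do this, since both give only necessary conditions for a path to be geodesic. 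The claim is in fact true --- the transition's descents in $T_i$ are forced because any path to the later vertices of $\sigma_n$ must pass through the greatest common ancestor $c_i$, and the compensating ascents in $T_u$ head toward $e_u$, which the endpoint also requires --- but discharging it honestly needs a counting or scheduling argument (or the Stein--Taback distance formula), a burden the paper's direction never incurs. In short: your proof can be completed and is a genuine mirror-image alternative, but the premise used to reject the paper's route is false, and your sketch leaves the one genuinely delicate step (geodesicity of $\sigma_n$) unproven.
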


\begin{proof}

Let $N$ be large enough so that all turns and finite projections of $\gamma$ and $\tau$ come before $N$. For $n>N$, define $\tau_n$ to be the ray that matches $\tau$ up through $n$ edges in $T_i$ and then ``tracks" $\gamma$ by going up and down in the same trees for each edge as in the proof of Lemma \ref{finiteobs}. By the same argument as in the proof of Lemma \ref{finiteobs}, each $\tau_n$ is asymptotic to $\gamma$. But clearly $\tau_n \to \tau$, so by Observation \ref{distinguishobs} we are done.

\end{proof}

\begin{corollary}

For $d>2$, an element of $\partial\dl_d(q)$ is topologically indistinguishable from at least one other element that only has non-empty projections in two trees: one that eventually ascends in height without bound, and the other which eventually descends in height without bound.

\end{corollary}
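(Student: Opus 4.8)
The plan is to start from an arbitrary geodesic ray $\gamma$ in $\dl_d(q)$ and reduce it, up to topological indistinguishability, to a ray with nonempty projection in exactly two trees, one ascending and one descending. First I would partition the trees $\{T_0,\dots,T_{d-1}\}$ into the set $\mathcal{F}$ of trees where $\gamma$'s projection is eventually constant and the set $\mathcal{I}$ of trees where the height of the projection goes to $\pm\infty$. This dichotomy is exactly the one licensed by the Corollary to Theorem \ref{geodesicturnsthm}: since a geodesic has at most one turn in each tree, each projection $\gamma^{(i)}$ either consists of finitely many edges (hence is eventually constant, landing in $\mathcal{F}$) or approaches a single end (landing in $\mathcal{I}$). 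The goal is to collapse everything down to one ascending tree and one descending tree.

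Next I would apply Lemma \ref{finiteobs} to replace $\gamma$ by an asymptotic ray $\tau$ whose projection to every tree in $\mathcal{F}$ is trivial; asymptotic rays are equal in $\partial\dl_d(q)$, so $[\gamma]=[\tau]$ and I may assume from the outset that $\gamma$ has empty projection to every tree in $\mathcal{F}$. At this point $\gamma$ has nonempty (indeed infinite) projection precisely to the trees in $\mathcal{I}$, with heights tending to $+\infty$ or $-\infty$. Since vertices of $\dl_d(q)$ satisfy $\sum_i h_i = 0$, the heights must balance, so $\mathcal{I}$ contains at least one ascending tree and at least one descending tree; let $T_a$ be one ascending tree and $T_b$ one descending tree in $\mathcal{I}$.

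Then I would use Lemma \ref{infiniteobs} to eliminate, one at a time, every tree in $\mathcal{I}\setminus\{T_a,T_b\}$. The subtlety is that Lemma \ref{infiniteobs} is stated in the opposite direction: it takes a ray with \emph{empty} projection to $T_i$ and shows it is topologically indistinguishable from a ray with \emph{infinite} projection to $T_i$ having otherwise the same ends. Since topological indistinguishability is symmetric, I can read it backwards. To remove a tree $T_c\in\mathcal{I}\setminus\{T_a,T_b\}$, I want to pass to a ray agreeing with $\gamma$ on all ends except that its projection to $T_c$ is trivial. The point is that $T_a$ and $T_b$ together can absorb the height bookkeeping that $T_c$ used to handle: the ascent $T_c$ provided can be rerouted into $T_a$ and the descent into $T_b$, producing a genuine geodesic ray with empty $T_c$-projection. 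Then Lemma \ref{infiniteobs} (applied with $T_i=T_c$) certifies that this new ray is topologically indistinguishable from one with infinite $T_c$-projection having the same remaining ends — in particular from $\gamma$ itself.

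The main obstacle I anticipate is the bookkeeping in this last reduction step: I must verify that when I reroute the ascending and descending edges formerly projecting to $T_c$ into $T_a$ and $T_b$, the resulting sequence of moves is still a \emph{geodesic} ray (using Theorem \ref{geodesicturnsthm}, ensuring I introduce no extra turns in $T_a$ or $T_b$ and respect the at-most-one-turn constraint), and that its ends in the surviving trees genuinely match those of $\gamma$ so that Lemma \ref{infiniteobs} applies. I would handle this by pushing the rerouting past the bound $N$ where all turns have already occurred, so that from $N$ onward I am only adding eternal ascents in $T_a$ and descents in $T_b$, which create no new turns. Iterating this elimination until only $T_a$ and $T_b$ remain yields an element topologically indistinguishable from $[\gamma]$ with nonempty projection in exactly two trees, one eventually ascending without bound and the other eventually descending without bound, as claimed; transitivity of topological indistinguishability across the finitely many elimination steps completes the argument.
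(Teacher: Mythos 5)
Your proposal is correct and takes essentially the same route as the paper, whose entire proof is the single line that the statement ``follows immediately from Lemmas \ref{finiteobs} and \ref{infiniteobs}'': first use Lemma \ref{finiteobs} to kill the eventually-constant projections, then repeatedly invoke Lemma \ref{infiniteobs} (read symmetrically, since topological indistinguishability is an equivalence relation) to delete the surplus infinite projections one tree at a time. The details you supply---the height-balancing argument guaranteeing that an ascending and a descending tree survive, the construction of the intermediate geodesic rays by rerouting edges past the last turn, and transitivity across the finitely many elimination steps---are exactly what the paper leaves implicit.
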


\begin{proof}

This follows immediately from Lemmas \ref{finiteobs} and \ref{infiniteobs}.

\end{proof}

\begin{lemma}

\label{switchobs}

Suppose that $\gamma$ is a geodesic ray in $\dl_d(q)$ for $d>2$ that has no projection to $T_i$ and infinite projection to $T_j$. Then we can construct a geodesic ray $\tau$ such that $\tau$ has no projection to $T_j$, $[\tau]$ is in every open set that contains $[\gamma]$, and $\gamma$ and $\tau$'s edges are exactly the same except that whenever $\gamma$ has an edge that would project to $T_j$, $\tau$ projects to the same exact edge in $T_i$.

In other words, $\gamma$ and $\tau$ are the same ray, just swapping the projections in $T_i$ and $T_j$ (one of which is empty), and the asymptotic equivalence classes of $\gamma$ and $\tau$ are topologically indistinguishable in $\partial \dl_d(q)$.

\end{lemma}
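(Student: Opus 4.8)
The plan is to define $\tau$ as the image of $\gamma$ under the coordinate-swapping map, and then to establish indistinguishability through Observation \ref{distinguishobs}, exactly as in the proofs of Lemmas \ref{finiteobs} and \ref{infiniteobs}. To make the construction of $\tau$ precise, I would identify $T_i$ with $T_j$ by the canonical isomorphism of the rooted, oriented tree $T$ (sending $o_i\mapsto o_j$ and preserving predecessors, successors, and the distinguished end), and let $\Phi\colon \dl_d(q)\to\dl_d(q)$ be the map that interchanges the $i$th and $j$th coordinates via this isomorphism and fixes all others. Since the vertex set and the adjacency relation of $\dl_d(q)$ are defined symmetrically in the trees (both the height-sum-zero condition and the ``ascend in one tree, descend in another'' edge rule are invariant under permuting trees), $\Phi$ is a graph automorphism fixing $o$, hence an isometry. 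Setting $\tau=\Phi\circ\gamma$ then produces a geodesic ray emanating from $o$ whose edges are those of $\gamma$ with every move in $T_j$ transferred to $T_i$; in particular $\tau$ has no projection to $T_j$, which is the asserted edge-correspondence.

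For the topological statement I would invoke Observation \ref{distinguishobs}: it suffices to exhibit geodesic rays $\tau_n$, each asymptotic to $\gamma$, having $\tau$ as a limit point in the compact-open topology. I would take $\tau_n$ to agree with $\tau$ on a prefix $[0,N_n]$ with $N_n\to\infty$ (which forces $\tau_n\to\tau$), and then, at time $N_n$, turn $\tau_n$ around so that from that point on it tracks $\gamma$ edge-by-edge in the sense of Lemma \ref{finiteobs} (moving up and down in the same trees as $\gamma$, and steering each projection toward the same end of each tree). I would carry this out in the situation $\gamma^{(j)}\to\omega_j$, so that correspondingly $\tau^{(i)}\to\omega_i$; the turnaround then amounts to returning the $T_i$-projection, which $\tau$ has driven out toward $\omega_i$, back toward $o_i$, while simultaneously starting the $T_j$-projection on its descent. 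A bounded-distance check via Lemma \ref{upperboundondistobs} and Observation \ref{lowerboundondistobs} — constant displacement in $T_i$, and eventual merging of the rays toward a common end in $T_j$ and in the remaining trees — then shows each $\tau_n$ is asymptotic to $\gamma$.

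The main obstacle is verifying that each $\tau_n$ is genuinely a geodesic ray, i.e. that the turnaround introduces no backtracking. This is where the orientation of the trees is essential: a descent toward the distinguished end follows the \emph{unique} predecessor, whereas an ascent may choose among $q$ successors. Hence, after $\tau_n$ has descended in $T_i$ toward $\omega_i$, it may re-ascend along a \emph{different} branch (one of the other $q-1$ successors at the bottom vertex), producing a single admissible turn in $T_i$ rather than a retraced edge forbidden by Lemma \ref{downthenimmediatelyuplemma}. Theorem \ref{geodesicturnsthm} then confirms that the resulting single turn in each tree is compatible with being geodesic, and I would check that the junctions between the prefix, the turnaround, and the tracking phase create no type $(j(\alpha)-i(\beta))\cdots(i(\beta)-k)$ pattern of the kind ruled out by Lemma \ref{downthenimmediatelyuplemma}.

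With $\tau_n$ geodesic, asymptotic to $\gamma$, and converging to $\tau$, Observation \ref{distinguishobs} yields that $[\gamma]$ and $[\tau]$ are topologically indistinguishable, so in particular $[\tau]$ lies in every open set containing $[\gamma]$. I expect the delicate point to be precisely the no-backtracking verification, which is why the argument is naturally phrased for the projection that descends toward the distinguished end; this is the configuration in which the lemma is applied when reducing an arbitrary class to the standard two-tree form supplied by the preceding corollary.
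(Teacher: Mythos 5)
Your strategy is the paper's own: define $\tau$ by swapping the $T_i$ and $T_j$ coordinates (your automorphism $\Phi$ is a clean way to make precise what the paper calls the ``obvious'' construction, and it gives geodesicity of $\tau$ for free), then produce geodesic rays $\tau_n$, each asymptotic to $\gamma$, converging to $\tau$ in the compact-open topology, and finish with Observation \ref{distinguishobs}. In the case you actually treat --- $\gamma^{(j)}$ descending forever toward $\omega_j$ --- your construction is correct: every projection of your $\tau_n$ is backtrack-free (descend along the unique downward path, then ascend along a fresh branch), the tree-by-tree constant-distance estimate together with Lemma \ref{upperboundondistobs} gives asymptoticity, and prefix agreement gives $\tau_n \to \tau$. (In fact, in this case the turnaround in $T_i$ is unnecessary: $\tau_n$ could simply freeze its $T_i$-coordinate at depth $N_n$ and begin tracking $\gamma$, since asymptoticity only requires each tree-distance to become constant. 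Also, a small logical point: Theorem \ref{geodesicturnsthm} is a \emph{necessary} condition for geodesics, so it cannot ``confirm'' that $\tau_n$ is geodesic; what makes these rays geodesic is precisely that all projections are backtrack-free, so the length of any initial segment equals half the sum of the tree-distances between the coordinates of its endpoints, which is a lower bound for any path.)

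The gap is the case analysis. The hypothesis says only that $\gamma^{(j)}$ is infinite, so either $\gamma^{(j)}$ descends forever to $\omega_j$ (your case) or it descends $\ell \geq 0$ edges and then ascends forever toward a non-distinguished end $e_j$ (the case the paper writes out). Your closing claim that the descending configuration ``is the configuration in which the lemma is applied'' is not true: to prove Corollary \ref{updowncor} one must also swap an \emph{ascending} infinite projection into an empty tree (for instance, when the ascending projection sits in $T_2$ and $T_0$ is empty), so both cases are genuinely needed, both for the lemma as stated and for its application. Moreover, the ascending case is not a verbatim mirror of yours: if $\tau_n$ finished its prefix and immediately began tracking $\gamma$'s up-moves in $T_j$, its $T_j$-projection would ascend from $o_j$ into a subtree that does not contain $e_j$ (when $\ell > 0$, the end $e_j$ lies below $o_j$), so $d_{T_j}\bigl(\tau_n^{(j)}(t), \gamma^{(j)}(t)\bigr)$ would grow without bound and $\tau_n$ would fail to be asymptotic to $\gamma$. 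The paper's proof inserts exactly the step your construction lacks: an intermediate phase of $\ell$ edges in which all down-moves are redirected into $T_j$, carrying $\tau_n^{(j)}$ down the unique descending path to the turning vertex of $\gamma^{(j)}$ (no backtrack is created there, by Lemma \ref{downthenimmediatelyuplemma} applied to $\gamma$), after which edge-by-edge tracking works. Your prefix/turnaround/tracking framework accommodates this extra phase, but as written your proof does not supply it.
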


\begin{proof}

We begin by assuming that $\gamma^{(j)}$ eventually increases without bound in height. The descending case is analogous. Let $\ell$ be the number of edges that $\gamma^{(j)}$ descends before increasing forever.

In the obvious way, we can construct a geodesic ray $\tau$ that exactly matches $\gamma$, except that the infinite projection to $T_j$ and the empty projection to $T_i$ are swapped. We will now construct a sequence of geodesic rays $\tau_n$ such that $\tau_n\in[\gamma]$ and $\tau_n\to\tau$, which will show topological indistinguishability.

Let $N$ be large enough so that every turn and finite projection of $\gamma$  (and thus of $\tau$ also) occurs before $N$. For $n>N$, we construct $\tau_n$ as follows:

The first $n$ edges of $\tau_n$ exactly match the first $n$ edges of $\tau$. By choice of $N$, we have partitioned the trees into ``up,'' ``down,'' and ``empty'' for projections of $\tau$. That is, any edge after $N$ has its up projection in one of the ``up'' trees and its down projection in one of the ``down'' trees (after $N$ there are no edge projections in the ``empty'' trees). Notice that $T_j$ is ``empty'' for $\tau$, but is ``up'' for $\gamma$.

So for the next $\ell$ edges of $\tau_n$, continue to copy $\tau$, except that the ``down'' projections of the edges should all be in $T_j$ instead. By the definition of $\ell$, these down edges will exactly reach the point where $\gamma^{(j)}$ turns.

For all subsequent edges, $\tau_n$ ``mimics'' $\gamma$ by going up and down in the same trees as $\gamma$. As a result, for $x\geq n+\ell$, the distance between $\tau_n(x)$ and $\gamma(x)$ will be equal to the distance between $\tau_n(n+\ell)$ and $\gamma(n+\ell)$, so the two rays are in the same asymptotic equivalence class. Since $\tau_n \to \tau$, by Observation \ref{distinguishobs} we are done.

\end{proof}

\begin{corollary}
\label{updowncor}

For $d>2$, an element of $\partial\dl_d(q)$ is topologically indistinguishable from at least one other element that only has non-empty projections in trees $T_0$ and $T_1$ such that the projection to $T_0$ eventually ascends in height without bound, and the projection to $T_1$ eventually descends in height without bound.

\end{corollary}

\begin{theorem}
\label{indiscretethm}
For $d>2$, $\partial\dl_d(q)$ has the indiscrete topology.

\end{theorem}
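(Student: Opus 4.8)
The plan is to reduce an arbitrary pair of boundary points to a standard form using the topological-indistinguishability machinery already built up, and then to show that any two points in that standard form are themselves topologically indistinguishable. Recall that a space has the indiscrete topology precisely when the only open sets are the empty set and the whole space; equivalently, every pair of points is topologically indistinguishable (since topological indistinguishability is an equivalence relation, if all points are mutually indistinguishable then no nonempty proper subset can be open). So the goal reduces to: \emph{any two points of $\partial\dl_d(q)$ are topologically indistinguishable}.

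First I would invoke Corollary \ref{updowncor}: given an arbitrary $[\gamma]\in\partial\dl_d(q)$, there is a point $[\gamma']$ topologically indistinguishable from it whose only nonempty projections are in $T_0$ (ascending without bound) and $T_1$ (descending without bound). Since topological indistinguishability is transitive, it suffices to prove the theorem for points already in this standard form. So let $[\gamma]$ and $[\delta]$ be two such standard-form points, each supported on $T_0$ and $T_1$ with $T_0$ ascending and $T_1$ descending. The task is to show $[\gamma]$ and $[\delta]$ are indistinguishable.

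The key step is to produce, for these two standard points, a sequence of rays asymptotic to $\gamma$ converging in the compact-open topology to $\delta$ (or vice versa), so that Observation \ref{distinguishobs} applies. Here I would imitate the construction in Lemma \ref{switchobs} and Lemma \ref{infiniteobs}: choose $N$ past all turns of both rays, and for $n>N$ build $\tau_n$ by first copying $\delta$ for its first $n$ edges, then using an intermediate stretch to ``repair'' the $T_0$/$T_1$ positions back to where $\gamma$ sits (exploiting the third tree $T_2$, which is empty for both standard points, as a reservoir of compensating up/down moves), and finally tracking $\gamma$ for all subsequent edges. The crucial geometric inputs are that $\gamma^{(0)}$ and $\delta^{(0)}$ may go to different ends of $T_0$ (and likewise in $T_1$), but because $T_2$ is available to absorb the height bookkeeping, each $\tau_n$ can be kept geodesic (no extra turns after $N$) while matching $\delta$ on a longer and longer initial segment; once it begins tracking $\gamma$, the distance to $\gamma$ stabilizes, so $\tau_n\in[\gamma]$. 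By construction $\tau_n\to\delta$, and Observation \ref{distinguishobs} gives indistinguishability of $[\gamma]$ and $[\delta]$.

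The main obstacle will be verifying that each $\tau_n$ is genuinely geodesic rather than merely a path, and that the ``repair'' stretch can always be carried out within the admissible edge types of $\dl_d(q)$ while respecting the height-sum constraint $\sum_i h_i=0$. This is exactly the place where $d>2$ matters: the extra tree $T_2$ supplies the additional degree of freedom (alluded to in the discussion before Observation \ref{distinguishobs}) that lets me relocate the projections in $T_0$ and $T_1$ from $\delta$'s configuration to $\gamma$'s without creating a second turn in any tree, so that Theorem \ref{geodesicturnsthm} is not violated and the resulting ray is geodesic. I would check this by tracking types carefully as in Lemma \ref{switchobs}, confirming that the compensating moves in $T_2$ produce no backtracking forbidden by Lemma \ref{downthenimmediatelyuplemma}. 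Once that is established, transitivity of indistinguishability together with Corollary \ref{updowncor} collapses the whole boundary to a single indistinguishability class, forcing the indiscrete topology.
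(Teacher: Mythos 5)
Your reduction to standard form via Corollary \ref{updowncor}, and your framing of indiscreteness as mutual topological indistinguishability of all pairs, match the paper. But the core construction fails, and it fails at exactly the point you flag as ``the main obstacle.'' Suppose the standard-form rays $\gamma^{(0)}$ and $\delta^{(0)}$ approach different ends $e \neq e'$ of $T_0$ (this is the essential case: if they approach the same ends, the two standard-form rays are already asymptotic and there is nothing to prove). Any ray $\tau_n$ that copies $\delta$ for its first $n$ edges has its $T_0$-projection committed to the $e'$-branch at height roughly $n$. If $\tau_n$ is to be asymptotic to $\gamma$, then by Observation \ref{lowerboundondistobs} its $T_0$-projection must stay within bounded distance of $\gamma^{(0)}$ and hence eventually follow the $e$-branch, so it must descend from height about $n$ back through $e \curlywedge e'$ and re-ascend. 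Those roughly $2n$ moves in $T_0$ are pure waste, and no use of $T_2$ can absorb them, because the waste is in $T_0$ itself: counting required up-moves and down-moves tree by tree, the vertex reached at time $2n+s$ (with the repair done against $T_2$, say) is only at distance about $n+s$ from the origin, so $\tau_n$ is not geodesic; if instead the repair is done against $T_1$, the path essentially returns to the origin. So no sequence of geodesic rays in $[\gamma]$ can converge to $\delta$, or indeed to any representative of $[\delta]$. The one-directional scheme ``rays asymptotic to $\gamma$ converging to $\delta$'' is not delicate but unworkable, and Observation \ref{distinguishobs} can never be applied in the way you intend.

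The paper's proof sidesteps this with a genuinely two-sided construction: it builds $\tau_n \in [\gamma]$ and $\tau_n' \in [\gamma']$ sharing a common initial segment of length $n$ that is \emph{neutral in $T_0$} --- it ascends in $T_2$ (always along the leftmost edge) and descends in $T_1$, never moving in $T_0$ --- and only afterwards descends $\ell$ (respectively $\ell'$) edges in $T_0$ and branches toward the ends of $\gamma$ (respectively $\gamma'$). Since neither ray ever commits to a wrong branch of $T_0$, each $\tau_n$ is geodesic and asymptotic to its target, yet $\tau_n$ and $\tau_n'$ agree on $[0,n]$; equivalently, both sequences converge to the common neutral ray ascending forever in $T_2$, and indistinguishability of $[\gamma]$ and $[\gamma']$ follows (for instance by applying Observation \ref{distinguishobs} twice, to $[\gamma]$ and to $[\gamma']$ against that neutral class, and using transitivity). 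This is where $d>2$ actually enters: not as a reservoir for height bookkeeping while repairing a copied segment of $\delta$, but as a place where \emph{both} approximating rays can postpone their commitment in $T_0$ indefinitely. To fix your argument you must abandon the ``copy $\delta$, then repair'' template and replace the copied segment by this shared neutral prefix.
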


\begin{proof}

Let $\gamma$ and $\gamma'$ be geodesic rays in $\dl_d(q)$. 
By Corollary \ref{updowncor}, we may assume that the only non-empty projections of $\gamma$ and $\gamma'$ are in trees $T_0$ and $T_1$, that $\gamma^{(0)}, \gamma'^{(0)}$ both eventually ascend in height without bound, and $\gamma^{(1)}, \gamma'^{(1)}$ both eventually descend in height without bound. Let $\ell$ (and $\ell'$) be the number of down edges in $\gamma^{(0)}$ (respectively, $\gamma'^{(0)}$) before turning.

For $N$ sufficiently large so that all the turns in $\gamma$ and $\gamma'$ occur after $N$ and for $n>N$, define $\tau_n$ so that the first $n$ edges go up always taking the leftmost edge in $T_2$ (recall $d>2$) and down in $T_1$. For the next $\ell$ edges, $\tau_n$ goes down in $T_0$ and up in $T_2$ (again, always taking the leftmost edge). After that, $\tau_n$ goes up in $T_0$ and down in $T_1$ towards the ends of $\gamma$. We define $\tau_n'$ similarly, but using $\ell'$ and $\gamma'$. Notice that $\tau_n|_{[0,n]} = \tau'_n|_{[0,n]}$.

The ray $\tau_n$ is asymptotic to $\gamma$, since the two rays are never further apart than their distance at $\tau_n(n+\ell)$ and $\gamma(n+\ell)$. Similarly, $\tau_n'$ is asymptotic to $\gamma'$.

But notice that for any $[0,k]\subseteq[0,\infty)$ and any $0<\epsilon<1$, we have $\tau_n \in B_{[0,k]}(\tau_n',\epsilon)$ for any $n\geq l$. Thus, $[\gamma]$ and $[\gamma']$ are topologically indistinguishable.

\end{proof}

\bibliographystyle{plain}
\bibliography{visualboundariestp}
\end{document}